\newtheorem{thm}{Theorem}[section]
\newtheorem{lem}[thm]{Lemma}
\newtheorem{proposition}[thm]{Proposition}
\newtheorem{cor}[thm]{Corollary}
\theoremstyle{definition}
\newtheorem{dfn}[thm]{Definition}
\newdefinition{con}[thm]{Construction}
\newdefinition{rmk}[thm]{Remark}
\theoremstyle{remark}
\numberwithin{equation}{section}
\journal{Journal of \LaTeX\ Templates}
\def\ps@pprintTitle{%
   \let\@oddhead\@empty
   \let\@evenhead\@empty
   \def\@oddfoot{\reset@font\hfil\thepage\hfil}
   \let\@evenfoot\@oddfoot
}
\begin{document}

\begin{frontmatter}

\title{Directed Strongly Regular Cayley Graphs on Dihedral groups $D_n$}
%\tnotetext[mytitlenote]{Fully documented templates are available in the elsarticle package on \href{http://www.ctan.org/tex-archive/macros/latex/contrib/elsarticle}{CTAN}.}
\author[abc]{Yiqin He}
\ead{2014750113@smail.xtu.edu.cn}
\author[rvt]{Bicheng Zhang\corref{cor1}}
\ead{zhangbicheng@xtu.edu.cn}
\author[abc]{Rongquan Feng}
\ead{fengrq@math.pku.edu.cn}
\address[abc]{School of Mathematical Sciences, Peking University, Beijing, 100871, PR China}
\address[rvt]{School of Mathematics and Computational Science, Xiangtan Univerisity, Xiangtan, Hunan, 411105, PR China}
\cortext[cor1]{Corresponding author}

%\fntext[fn1]{Supported by Hunan Provincial Natural Science Foundation of China (09JJ4002).}
%\fntext[fn2]{Supported by Natural Science Foundation of China (11471108).}
%\fntext[fn2]{Another author footnote, this is a very long
%footnote and it should be a really long footnote. But this
%footnote is not yet sufficiently long enough to make two lines
%of footnote text.}
%\fntext[fn3]{Yet another author footnote.}
%% Group authors per affiliation:
%\author{Elsevier\fnref{myfootnote}}
%\address{Radarweg 29, Amsterdam}
%\fntext[myfootnote]{Since 1880.}

%% or include affiliations in footnotes:
%\author[mymainaddress,mysecondaryaddress]{Elsevier Inc}
%\ead[url]{www.elsevier.com}

%\author[mysecondaryaddress]{Global Customer Service\corref{mycorrespondingauthor}}
%\cortext[mycorrespondingauthor]{Corresponding author}
%\ead{support@elsevier.com}
%
%\address[mymainaddress]{1600 John F Kennedy Boulevard, Philadelphia}
%\address[mysecondaryaddress]{360 Park Avenue South, New York}

\begin{abstract}
In this paper,\;we characterize some certain directed strongly regular Cayley graphs on Dihedral groups $D_{n}$,\;where $n\geqslant 3$ is a positive integer.\;
\end{abstract}
\begin{keyword}Directed strongly regular graph;\;Cayley graph;\;Dihedral group;\;Representation Theory;\;Fourier Transformation
%Algebraic Number Theory

\end{keyword}
\end{frontmatter}
%\linenumbers
\section{Introduction}
\subsection{Overview}
A \emph{directed strongly regular graph} (DSRG) with parameters $( n, k, \mu ,\lambda , t)$ is a $k$-regular directed graph on $n$ vertices such that every vertex is incident with $t$ undirected edges,\;and the number of paths of length two from a vertex $x$ to a vertex $y$ is $\lambda$ if there is an edge directed
from $x$ to $y$ and it is $\mu$ otherwise.\;A DSRG with $t=k$ is an \emph{(undirected) strongly regular graph} (SRG).\;Duval showed that DSRGs
with $t=0$ are the \emph{doubly regular tournaments}.\;It is therefore usually assumed that $0<t<k$.\;The DSRGs which satisfy the condition $0<t<k$ are called genuine DSRGs.\;The DSRGs appear on this paper are all genuine.

Let $D$ be a directed graph with $n$ vertices.\;Let $A=\mathbf{A}(D)$ denote the adjacency matrix of $D$,\;and let $I = I_n$ and
$J = J_n$ denote the $n\times n$ identity matrix and all-ones matrix,\;respectively.\;Then $D$ is a directed strongly regular graph
with parameters $(n,k,\mu,\lambda,t)$ if and only if (i) $JA = AJ = kJ$ and (ii) $A^2=tI+\lambda A+\mu(J-I-A)$.\;

Let $G$ be a finite (multiplicative) group and $S$ be a subset of  $G\backslash\{e\}$.\;The Cayley graph of $G$ generated
by $S$,\;denoted by $\mathbf{Cay}(G,S)$,\; is the digraph $\Gamma$ such that $V(\Gamma)=G$ and $g\rightarrow h$ if and only if
$g^{-1}h\in S$,\;for any $g,\;h\in G$.

Let $C_n=\langle x\rangle$ be a cyclic multiplicative group of order $n$.\;The \emph{dihedral group} $D_n$ is the group of symmetries of a
regular polygon,\;and it can be viewed as a semidirect product of two cyclic groups $C_n=\langle x\rangle$ of order $n$
and $C_2=\langle \tau\rangle$ of order $2$.\;The presentation of $D_n$
is $D_n=C_n\rtimes C_2=\langle x,\tau|x^n=1,\tau^2=1,\tau x=x^{-1}\tau\rangle$.\;
The cyclic group $C_n$ is a normal subgroup of $D_n$ of index $2$.\;
Note that each subset $S$ of the dihedral group $D_n$ can be written by the form $S=X\cup Y\tau$ for
some subsets $X,Y\subseteq C_n$,\;we denote the Cayley graph $\mathbf{Cay}(D_n, X\cup Y\tau)$ by $Dih(n,X,Y)$.\;

In this paper,\;we focus on the directed strongly regular Cayley graphs on dihedral groups.\;The \emph{Cayley graphs} on dihedral groups are called \emph{dihedrants}.\;A dihedrant which is a DSRG is called \emph{directed strongly regular dihedrant}.\;

We characterize all the directed strongly regular dihedrants $Dih(n,X,X)$ for any integer $n\geqslant3$.\;Note that the dihedrants $Dih(n,X,X)$ and $Dih(n,\sigma(X),x^b\sigma(X))$ are isomorphic for any $\sigma\in \mathbf{Aut}(C_n)$ and $b\in \mathbb{Z}_n$.\;Hence,\;we actually characterize all the  directed strongly regular dihedrants $Dih(n,\sigma(X),x^b\sigma(X))$ for any $\sigma\in \mathbf{Aut}(C_n)$ and $b\in \mathbb{Z}_n$.\;

\subsection{Notation and terminology}

Let $G$ denote a finite group with identity $e$,\;and let $X$ denote a nonempty subset of $G$.\;We denote by $X^{(-1)}$ the set $\{x^{-1}:x\in X\}$.\;

The following notataion will be used.\;Let $A$ be a multiset together with a \emph{multiplicity function} $\Delta_{A}$,\;where $\Delta_{A}(a)$ counting how many times $a$ occurs in the multiset $A$.\;We say $a$ belongs to $A$\;(i.e.\;$a\in A$)\;if $\Delta_A(a)>0$.\;
In the following,\;$A$ and $B$ are multisets,\;with multiplicity functions $\Delta_{A}$ and $\Delta_{B}$.\;

\begin{itemize}
  \item $|A|=\sum\limits_{a\in A}\Delta_A(a)$;
  \item $A\subseteq B$ if $\Delta_A(a)\leqslant\Delta_B(a)$ for any $a\in B$;
  \item \textbf{Union,\;$A\uplus B$}:\;the union of multisets $A$ and $B$,\;is defined by $\Delta_{A\uplus B}=\Delta_A+\Delta_B$;
  \item \textbf{Scalar multiplication,\;$n\oplus A$}:\;the scalar multiplication of a multiset $A$ by a natural number $n$,\;is defined by $\Delta_{n\oplus A}=n\Delta_A$.
  \item \textbf{Difference,\;$A\setminus B$}:\;the difference of multisets $A$ and $B$,\;is defined by $\Delta_{A\setminus B}(x)=\max\{\Delta_A(a)-\Delta_B(a),0\}$ for any $a\in A$.\;
\end{itemize}
If $A$ and $B$ are usual sets,\;we use $A\cup B$,\;$A\cap B$ and  $A\setminus B$ denote the usual union,\;intersection and difference of $A$ and $B$.\;For example,\;if $A=\{1,2\}$ and $B=\{1,3\}$,\;then $A\uplus B=\{1,1,2,3\}$,\;$A\cup B=\{1,2,3\}$,\;$2\oplus A=\{1,1,2,2\}$,\;$A\setminus B=\{2\}$ and $\{1,1,2,2\}\setminus B=\{1,2,2\}$.\;

Let $C_n=\langle x\rangle$ be a cyclic multiplicative group of order $n$.\;Let $v$ be a divisor of $n$,\;then $\langle x^{v}\rangle$ is a normal subgroup of $C_n=\langle x\rangle$ and hence $C_n=\bigcup_{j=0}^{v-1}x^j\langle x^{v}\rangle$ is the left coset decomposition of $C_n$ with respect to the subgroup $\langle x^{v}\rangle$.\;Let $T$ be a multisubet of $\{e,x^1,\cdots,x^{v-1}\}$,\;then the notation $T\langle x^{v}\rangle$ means that
\[T\langle x^{v}\rangle=\biguplus_{t\in T}(\Delta_T(t)\oplus t\langle x^{v}\rangle).\]
For example,\;if $T=\{e,e,x\}$,\;then $T\langle x^{v}\rangle=\langle x^{v}\rangle\uplus\langle x^{v}\rangle\uplus(x\langle x^{v}\rangle)$.\;

Throughout this paper,\;let $\mathbb{Z}_n=\{0,1,2,\cdots,n-1\}$ be the modulo $n$ residue class ring.\;For
a positive divisor $v$ of $n$ let $v\mathbb{Z}_n=\{0,v,2v,\cdots,n-v\}$ be the subgroup of the additive group of $\mathbb{Z}_n$ of order $\frac{n}{v}$.\;
For a multisubset $A$ of $\mathbb{Z}_n$,\;let $$x^A=\biguplus_{i\in A}\Delta_A(i)\oplus\{x^i\},\;$$
then $x^A$ is a multisubset of $C_n$.\;Observe that $C_n=x^{\mathbb{Z}_n}$ and $\langle x^{v}\rangle=x^{v\mathbb{Z}_n}$ for any $v|n$.\;Let $A,B$ be the mutlisubsets of $\mathbb{Z}_n$,\;let $-A=\{-a|a\in A\}$ and $A+B=\{a+b|a\in A,b\in B\}$,\;where the elements in the $-A$ and $A+B$ are counted multiplicities.\;

For example,\;let $n=5$,\;$A=\{1,1,3\}$ and $B=\{2,4\}$,\;then $x^A=\{x,x,x^3\}$,\;$-A=\{4,4,2\}$ and $A+B=\{1+2,1+4,1+2,1+4,3+2,3+4\}=\{3,0,3,0,0,2\}$.\;

\subsection{The main theorem}
In this paper,\;we focus on the directed strongly regular dihedrants.\;We now give some known directed strongly regular dihedrants.
\begin{thm}(\cite{K2})\label{t-1.1}Let $n$ be odd and let $X,Y\subset C_n$ satisfy the following conditions:\\
(i)\;$\overline{X}+\overline{X^{(-1)}}=\overline{C_n}-e$,\\
(ii)\;$\overline{Y}\;\overline{Y^{(-1)}}-\overline{X}\;\overline{X^{(-1)}}=\varepsilon\overline{C_n}$,$\varepsilon\in\{0,1\}$.\\
Then $\mathbf{Cay}(D_n,X\cup aY)$ is a DSRG with parameters $(2n,n-1+\varepsilon,\frac{n-1}{2}+\varepsilon,\frac{n-3}{2}+\varepsilon,\frac{n-1}{2}+\varepsilon)$.\;In particular,\;if $X$ satisfies $(i)$ and $Y=Xg$ or $X^{(-1)}g$ for some $g\in C_n$,\;then $\mathbf{Cay}(D_n,X\cup aY)$ is a DSRG with parameters $(2n,n-1,\frac{n-1}{2},\frac{n-3}{2},\frac{n-1}{2})$.\;
\end{thm}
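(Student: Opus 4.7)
The plan is to translate the DSRG condition into an identity in the group ring $\mathbb{Z}[D_n]$ and check it by a direct group-ring expansion. Set $S=X\cup aY$. Since any Cayley graph is automatically $|S|$-regular, the regularity condition is free, and all that needs to be checked is that
$$\overline{S}^{\,2}=t\,e+\lambda\,\overline{S}+\mu(\overline{D_n}-e-\overline{S})$$
holds in $\mathbb{Z}[D_n]$. I would work with the splitting $\mathbb{Z}[D_n]=\mathbb{Z}[C_n]\oplus a\,\mathbb{Z}[C_n]$ and use throughout the conjugation rule $\overline{Z}\,a=a\,\overline{Z^{(-1)}}$ for $Z\subseteq C_n$.

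First, I would extract the cardinalities. Applying the augmentation (sum of coefficients) to (i) gives $2|X|=n-1$, hence $|X|=(n-1)/2$; applying it to (ii) gives $|Y|^2=|X|^2+\varepsilon n$, whose only non-negative integer solution (for $\varepsilon\in\{0,1\}$ and $n$ odd) is $|Y|=(n-1)/2+\varepsilon$. Hence $k=|S|=|X|+|Y|=n-1+\varepsilon$, matching the claimed degree. To identify $t$, note that $S^{(-1)}=X^{(-1)}\cup aY$ (because $y^{-1}a=ay$ in $D_n$), so $|S\cap S^{(-1)}|=|X\cap X^{(-1)}|+|Y|$. Since $n$ is odd, condition (i) partitions $C_n\setminus\{e\}$ into inverse-pairs $\{g,g^{-1}\}$ with exactly one element in $X$, forcing $X\cap X^{(-1)}=\emptyset$; thus $t=|Y|=(n-1)/2+\varepsilon$, as claimed.

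Next, I would expand
$$\overline{S}^{\,2}=\overline{X}^{\,2}+\overline{X}\cdot a\overline{Y}+a\overline{Y}\cdot\overline{X}+(a\overline{Y})^{2}.$$
The last term equals $\overline{Y}\,\overline{Y^{(-1)}}$ via $a^2=e$ and the conjugation rule, while the two cross-terms combine into $a\bigl(\overline{X^{(-1)}}+\overline{X}\bigr)\overline{Y}$ using commutativity in $C_n$. Applying (i) gives
$$\overline{S}^{\,2}=\overline{X}^{\,2}+\overline{Y}\,\overline{Y^{(-1)}}\;+\;a\bigl(|Y|\,\overline{C_n}-\overline{Y}\bigr).$$
Writing the right-hand side of the DSRG identity also in $C_n\oplus aC_n$ form, the required identity splits into two pieces. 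The $aC_n$-component reduces to $(|Y|-\mu)\overline{C_n}=(\lambda-\mu+1)\overline{Y}$, which is trivial with the claimed parameters since $\lambda-\mu+1=0$ and $|Y|=\mu$. The $C_n$-component reduces to $\overline{X}^{\,2}+\overline{Y}\,\overline{Y^{(-1)}}=-\overline{X}+\mu\,\overline{C_n}$; substituting (ii) to replace $\overline{Y}\,\overline{Y^{(-1)}}$ by $\overline{X}\,\overline{X^{(-1)}}+\varepsilon\overline{C_n}$, then factoring $\overline{X}\bigl(\overline{X}+\overline{X^{(-1)}}\bigr)$ and applying (i), turns the identity into the numerical statement $|X|=\mu-\varepsilon=(n-1)/2$, already verified.

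For the special case $Y=Xg$ or $Y=X^{(-1)}g$, I would simply observe that $\overline{Y}\,\overline{Y^{(-1)}}=\overline{X}\,\overline{X^{(-1)}}$ (the $g$ and $g^{-1}$ cancel in $C_n$), so (ii) holds with $\varepsilon=0$ and the parameters specialize as stated. The only real obstacle is bookkeeping: keeping the conjugation rule $\overline{Z}a=a\overline{Z^{(-1)}}$ straight in the cross-terms and separating the $C_n$- and $aC_n$-components cleanly. Once that is done, conditions (i) and (ii) are exactly what is needed to collapse each component to a trivial numerical equality.
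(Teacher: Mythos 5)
Your verification is correct: the paper itself states Theorem~\ref{t-1.1} without proof (it is quoted from \cite{K2}), but your group-ring computation is exactly the decomposition the paper later formalizes as Lemma~\ref{l-4.1}, whose conditions (\ref{4.1}) and (\ref{4.2}) are precisely your $aC_n$- and $C_n$-components, and hypotheses (i), (ii) collapse them to the numerical identities $|Y|=\mu$, $\lambda-\mu=-1$, $|X|+\varepsilon=\mu$ as you show. The cardinality and $t$-bookkeeping (including $X\cap X^{(-1)}=\emptyset$ from $n$ odd) is also right, so nothing is missing.
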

We can say more when $n$ is a odd prime.
\begin{thm}(\cite{K2})\label{t-1.2}Let $n$ be odd prime and let $X,Y\subset C_n$ and $b\in D_n\setminus C_n$,\;Then the Cayley graph $\mathbf{Cay}(D_n,X\cup bY)$ is a DSRG if and only $X,Y$ satisfy the conditions of Theorem \ref{t-1.1}.\;
\end{thm}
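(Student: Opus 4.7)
The plan is to convert the DSRG identity $A^{2}=tI+\lambda A+\mu(J-I-A)$ into an equation in the group algebra $\mathbb{Z}[D_n]$,\;and then extract coordinatewise information via the characters of $C_n$.\;After replacing $b$ by $\tau$ (admissible via a translation of $Y$),\;the identity becomes $\overline{S}^{\,2}=t\,e+\lambda\overline{S}+\mu(\overline{D_n}-e-\overline{S})$ with $\overline{S}=\overline{X}+\tau\overline{Y}$.\;Using $x^{i}\tau=\tau x^{-i}$ to expand $\overline{S}^{\,2}$ and separating the $C_n$-part from the $\tau C_n$-part produces two equations in $\mathbb{Z}[C_n]$:
\begin{align*}
(E_1)\quad &\overline{X}^{\,2}+\overline{Y^{(-1)}}\cdot\overline{Y}=(t-\mu)\,e+(\lambda-\mu)\overline{X}+\mu\,\overline{C_n},\\
(E_2)\quad &\overline{X^{(-1)}}\cdot\overline{Y}+\overline{Y}\cdot\overline{X}=(\lambda-\mu)\overline{Y}+\mu\,\overline{C_n}.
\end{align*}

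For each nontrivial character $\chi_{j}$ of $C_n$ with $j\in\mathbb{Z}_n\setminus\{0\}$,\;set $\alpha_{j}:=\chi_{j}(\overline{X})$ and $\beta_{j}:=\chi_{j}(\overline{Y})$.\;Since $\chi_{j}(\overline{X^{(-1)}})=\overline{\alpha_{j}}$ and $\chi_{j}(\overline{C_n})=0$,\;equations $(E_1)$ and $(E_2)$ become
\[
\alpha_{j}^{\,2}+|\beta_{j}|^{2}=(t-\mu)+(\lambda-\mu)\alpha_{j},\qquad 2\,\mathrm{Re}(\alpha_{j})\,\beta_{j}=(\lambda-\mu)\,\beta_{j}.
\]
The second identity forces a dichotomy at each $j\ne 0$:\;either $\beta_{j}=0$ or $2\,\mathrm{Re}(\alpha_{j})=\lambda-\mu$.\;The primality of $n$ enters here through Galois theory:\;the group $\mathrm{Gal}(\mathbb{Q}(\omega_n)/\mathbb{Q})\cong\mathbb{Z}_n^{*}$ acts transitively on $\{\chi_{j}:j\ne 0\}$ by $\sigma_{a}(\chi_{j})=\chi_{aj}$,\;whence $\sigma_{a}(\beta_{j})=\beta_{aj}$ and $(\beta_{j})_{j\ne 0}$ forms a single Galois orbit.\;Hence either every $\beta_{j}$ vanishes or none does.\;In the former case,\;Fourier inversion forces $\overline{Y}=(|Y|/n)\,\overline{C_n}$ and so $Y\in\{\emptyset,C_n\}$,\;neither of which yields a genuine DSRG dihedrant (the graph either disconnects into two copies of $\mathbf{Cay}(C_n,X)$,\;for which no genuine DSRG exists when $n$ is prime,\;or collapses to a degenerate complete-bipartite-like structure).\;Thus $2\,\mathrm{Re}(\alpha_{j})=\lambda-\mu$ for every $j\ne 0$.

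Fourier inversion of this constant condition produces $\overline{X}+\overline{X^{(-1)}}=c\,(\overline{C_n}-e)$ for some integer $c$,\;and since every coefficient of $\overline{X}+\overline{X^{(-1)}}$ lies in $\{0,1,2\}$ we must have $c\in\{0,1,2\}$.\;The extremes $c=0$ (so $X=\emptyset$) and $c=2$ (so $X=C_n\setminus\{e\}$) are ruled out by direct inspection of $(E_1)$ and $(E_2)$,\;leaving $c=1$---precisely condition (i) of Theorem~\ref{t-1.1}.\;Substituting $\overline{\alpha_{j}}=(\lambda-\mu)-\alpha_{j}$ into the first character equation yields $|\beta_{j}|^{2}-|\alpha_{j}|^{2}=t-\mu$ for every $j\ne 0$,\;and Fourier inversion translates this to $\overline{Y}\cdot\overline{Y^{(-1)}}-\overline{X}\cdot\overline{X^{(-1)}}=(t-\mu)\,e+\delta\,\overline{C_n}$ for some nonnegative integer $\delta$.\;Using the DSRG parameter identity $k^{2}=t+\lambda k+\mu(2n-1-k)$ together with condition (i) just established,\;one forces $t=\mu$,\;which kills the residual $(t-\mu)e$ term and produces condition (ii) with $\varepsilon=\delta\in\{0,1\}$.

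I expect the main obstacle to be the careful exclusion of the degenerate cases $\beta_{j}\equiv 0$ and $c\in\{0,2\}$,\;together with the final arithmetic step forcing $t=\mu$ from the DSRG parameter identities;\;the Fourier/character machinery itself is routine once $(E_1)$ and $(E_2)$ are in hand,\;and the primality of $n$ is needed only in the Galois step that upgrades a single vanishing $\beta_{j}$ to the vanishing of all of them.
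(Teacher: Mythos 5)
First, note that the paper does not prove Theorem~\ref{t-1.2}: it is imported from \cite{K2}, and the closest in-paper material is Lemma~\ref{l-4.1}, whose two group-ring equations are exactly your $(E_1)$ and $(E_2)$. Your skeleton is the right one and is the same machinery the paper uses elsewhere: split $\overline{S}^2$ into the $C_n$- and $\tau C_n$-parts, apply characters, exploit the dichotomy from $(E_2)$, and use transitivity of the $\mathrm{Gal}(\mathbb{Q}(\zeta_n)/\mathbb{Q})$-action on the nonprincipal characters --- which is indeed the only place primality enters --- to upgrade ``some $\beta_j=0$'' to ``all $\beta_j=0$''. The Fourier inversion giving $\overline{X}+\overline{X^{(-1)}}=c(\overline{C_n}-e)$ with $c=\mu-\lambda\in\{0,1,2\}$ and the exclusions $c=0$, $c=2$ are all sound (in both excluded cases one computes $t=k$ or $\mu=0$, contradicting genuineness, which the paper assumes throughout).

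There are, however, two places where you assert rather than prove, and in one of them the tool you name would not suffice. (1) The case $Y=C_n$ is not ``degenerate by inspection'': with $\overline{Y}=\overline{C_n}$, equation $(E_2)$ only gives $\lambda=2|X|$, and one must combine the $\chi_0$-evaluation of $(E_1)$ with the constraint $t<k$ to reach $(n+|X|)(n-|X|-1)<\mu(n-|X|-1)\leqslant t(n-|X|-1)<k(n-|X|-1)$, a contradiction unless $|X|=n-1$, in which case $t=k$. That computation is needed; a parenthetical does not discharge it. (2) Your final step claims the parameter identity $k(k+\mu-\lambda)=t+(2n-1)\mu$ together with condition (i) ``forces $t=\mu$''; by itself it does not --- it is one equation in $t$, $\mu$ and $|Y|$. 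What does work is summing your relation $|\beta_j|^2-|\alpha_j|^2=t-\mu$ over $j\neq 0$ (equivalently, comparing the coefficient of $e$ in $\overline{Y}\,\overline{Y^{(-1)}}-\overline{X}\,\overline{X^{(-1)}}$ with the constant term): writing $u=|Y|-\tfrac{n-1}{2}$ one gets $(n-1)(t-\mu)=u-u^2$, and since $t\geqslant\mu$ and $u\in\mathbb{Z}$ this forces $u\in\{0,1\}$ and $t=\mu$, which simultaneously yields $\delta=\varepsilon\in\{0,1\}$ (you also never justify why your $\delta$ is a nonnegative integer at most $1$; it falls out of this same computation). With those two repairs the argument is complete; as written, the endgame has genuine gaps.
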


\begin{thm}(\cite{FI})\label{t-1.3}Let $n$ be even,\;$c\in C_n$ be an involution and let $X,Y\subset C_n$ such that:\\
(i)\;$\overline{X}+\overline{X^{(-1)}}=\overline{C_n}-e-c$,\\
(ii)\;$\overline{Y}=\overline{X}$ or $\overline{Y}=\overline{X^{(-1)}}$,\\
(ii)\;$\overline{Xc}=\overline{X^{(-1)}}$.\;\\
Let $b\in D_n\setminus C_n$,\;then the Cayley graph $\mathbf{Cay}(D_n,X\cup bY)$
is a DSRG with parameters $(2n,n-1,\frac{n}{2}-1,\frac{n}{2}-1,\frac{n}{2})$.\;
\end{thm}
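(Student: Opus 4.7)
The plan is to work in the integral group algebra $\mathbb{Z}[D_n]$, where the DSRG property becomes a single algebraic identity. By the characterization recalled in the introduction, $\mathbf{Cay}(D_n,S)$ with $S=X\cup bY$ has the announced parameters iff the element $\overline{S}=\overline{X}+b\,\overline{Y}$ satisfies
\[
\overline{S}^{\,2}\;=\;t\,e+\lambda\,\overline{S}+\mu\,(\overline{D_n}-e-\overline{S})
\]
for $(t,\lambda,\mu)=(\tfrac{n}{2},\tfrac{n}{2}-1,\tfrac{n}{2}-1)$. Since $\lambda=\mu$, the right-hand side collapses to $e+(\tfrac{n}{2}-1)\,\overline{D_n}$, and this is the identity I would verify.

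To expand the left-hand side I would use $b^{2}=e$ together with the dihedral commutation relation $\overline{Z}\,b=b\,\overline{Z^{(-1)}}$ for every $\overline{Z}\in\mathbb{Z}[C_n]$, both being consequences of $\tau g\tau^{-1}=g^{-1}$. A direct expansion then yields the coset decomposition
\[
\overline{S}^{\,2}\;=\;\bigl(\overline{X}^{\,2}+\overline{Y^{(-1)}}\,\overline{Y}\bigr)\;+\;b\cdot\overline{Y}\bigl(\overline{X}+\overline{X^{(-1)}}\bigr),
\]
whose two summands lie in the disjoint cosets $\mathbb{Z}[C_n]$ and $b\,\mathbb{Z}[C_n]$. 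Because $\overline{D_n}=\overline{C_n}+b\,\overline{C_n}$, matching cosets reduces the theorem to two separate identities in $\mathbb{Z}[C_n]$: one for the $C_n$-part and one for the coefficient of $b$ in the $bC_n$-part.

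For the $bC_n$-coefficient I would invoke (i) to rewrite $\overline{Y}\bigl(\overline{X}+\overline{X^{(-1)}}\bigr)=|Y|\,\overline{C_n}-(1+c)\overline{Y}$; then using (iii) in the form $cX=X^{(-1)}$ together with either clause of (ii), the quantity $(1+c)\overline{Y}$ evaluates to a fixed element in the span of $\{\overline{C_n},e,c\}$, and matching against the target coefficient $(\tfrac{n}{2}-1)\overline{C_n}$ pins down the role of the degree constraint $2|X|=n-2$ built into (i). For the $C_n$-part I would use (ii) to express $\overline{Y^{(-1)}}\,\overline{Y}$ in terms of $\overline{X}$ and $\overline{X^{(-1)}}$, factor $\overline{X}^{\,2}+\overline{X}\,\overline{X^{(-1)}}=\overline{X}\bigl(\overline{X}+\overline{X^{(-1)}}\bigr)$, apply (i) to get $|X|\overline{C_n}-\overline{X}-\overline{Xc}$, and invoke (iii) a second time to replace $\overline{Xc}$ by $\overline{X^{(-1)}}$; all quantities then collapse onto the span of $\{e,c,\overline{C_n}\}$, where direct comparison with $e+(\tfrac{n}{2}-1)\overline{C_n}$ finishes the verification.

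The main obstacle I anticipate is the careful bookkeeping around the distinguished element $c$: condition (i) produces stray $-e-c$ terms, and (iii) tightly couples $c$ to $\overline{X}$ via $cX=X^{(-1)}$, so the cancellations needed to reach the clean form $e+(\tfrac{n}{2}-1)\overline{D_n}$ genuinely depend on applying (i) and (iii) in concert rather than sequentially. A secondary difficulty is handling the two clauses of (ii) uniformly; the right observation is that, under (i) and (iii), both $(1+c)\overline{Y}$ and $\overline{Y^{(-1)}}\,\overline{Y}$ are determined by the symmetric sum $\overline{X}+\overline{X^{(-1)}}$ which is fixed by (i), so the two cases essentially merge into a single computation.
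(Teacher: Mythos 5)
The paper gives no proof of Theorem \ref{t-1.3} at all---it is quoted from the cited source---so the only comparison available is with the general machinery the paper develops later (Lemma \ref{l-4.1}), and your setup is precisely that machinery: reduce to $\overline{S}^{\,2}=e+(\tfrac{n}{2}-1)\overline{D_n}$ and split into the two coset components
\[
\overline{X}^{\,2}+\overline{Y^{(-1)}}\,\overline{Y}=e+\left(\tfrac{n}{2}-1\right)\overline{C_n},
\qquad
\overline{Y}\bigl(\overline{X}+\overline{X^{(-1)}}\bigr)=\left(\tfrac{n}{2}-1\right)\overline{C_n}.
\]
The gap is that you assert these identities ``collapse'' onto the target without carrying out the one line of computation that would show they do not. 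Comparing augmentations in (i) gives $2|X|=n-2$, so $k=|X|+|Y|=2|X|=n-2$, which already contradicts the claimed degree $n-1$ (an odd number, hence never of the form $|X|+|Y|$ under (ii)). Concretely, in your $bC_n$-step: conditions (i) and (iii) give $(e+c)\overline{X}=\overline{X}+\overline{Xc}=\overline{X}+\overline{X^{(-1)}}=\overline{C_n}-e-c$, so with $Y=X$,
\[
\overline{Y}\bigl(\overline{X}+\overline{X^{(-1)}}\bigr)=|Y|\,\overline{C_n}-(e+c)\overline{Y}
=\left(\tfrac{n}{2}-2\right)\overline{C_n}+e+c,
\]
not $(\tfrac{n}{2}-1)\overline{C_n}$. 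A coefficient comparison shows this can never equal $(\lambda-\mu)\overline{Y}+\mu\overline{C_n}$ for any $\lambda,\mu$ (the coefficient of $e$ forces $\mu=\tfrac{n}{2}-1$ while any $g\notin Y\cup\{e,c\}$ forces $\mu=\tfrac{n}{2}-2$), so under the hypotheses as printed the graph is not a DSRG at all.

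In other words, the statement as transcribed in this paper is defective, and your computation---had you finished it---is exactly the instrument that detects this. The repair is that $Y$ must contain the identity: with $\overline{Y}=\overline{X}+e$ or $\overline{Y}=\overline{X^{(-1)}}+e$ one gets $k=n-1$, $(e+c)\overline{Y}=\overline{C_n}$, hence $\overline{Y}(\overline{X}+\overline{X^{(-1)}})=(\tfrac{n}{2}-1)\overline{C_n}$, and the $C_n$-component likewise reduces to $e+(\tfrac{n}{2}-1)\overline{C_n}$, so your two-identity plan then closes verbatim. The strategy is sound; the error is the unverified claim that the bookkeeping around $e$ and $c$ cancels under the hypotheses as literally stated.
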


For an odd prime $p$,\;the characterization of directed strongly regular dihedrant $Dih(p,X,Y)$ has been achieved in \cite{K2},\;which was presented in Theorem \ref{t-1.2}.\;

In this paper,\;we characterize all the directed strongly regular dihedrants $Dih(n,X,X)$.\;We obtain the following theorem.\;
\begin{thm}\label{t-1.4}The dihedrant $Dih(n,X,X)$ is a DSRG with parameters $(2n,2|X|,\mu,\lambda,t)$ if and only if one of the following  holds:\\
$(a)$\;Let $v=\frac{n}{\mu-\lambda}$.\;There exists a subset $T$ of $\{x,x^2,\cdots,x^{v-1}\}$ satisfies\\
\hspace*{15pt}$(i)$\;$X=T\langle x^v\rangle$;\\
\hspace*{15pt}$(ii)$\;$X\cup X^{(-1)}=C_n\setminus\langle x^v\rangle$.\\
$(b)$\;Let $v=\frac{n}{\mu-\lambda}$.\;There exists a subset $T'$ of $ \{x,x^2,\cdots,x^{2v-1}\}$ satisfies\\
\hspace*{15pt}$(i)$\;$X=T'\langle x^{2v}\rangle$;\\
\hspace*{15pt}$(ii)$\;$X\uplus X^{(-1)}=(C_n\setminus\langle x^{2v}\rangle)\uplus x^v\langle x^{2v}\rangle$;\\
\hspace*{15pt}$(iii)$\;$X\cup x^vX=C_n$.
\end{thm}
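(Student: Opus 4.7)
The plan is to translate the DSRG matrix identity $A^2 = tI + \lambda A + \mu(J-I-A)$ into a single equation in the group algebra $\mathbb{Z}[D_n]$, exploit the relation $\tau x = x^{-1}\tau$ to split it into its $C_n$-part and its $\tau$-part, and then apply the characters of $C_n$. Writing $S = X + X\tau$ and using $(X\tau)(Y\tau) = X\cdot Y^{(-1)}$, equating the $C_n$- and $\tau$-parts of the DSRG equation shows that $Dih(n,X,X)$ is a DSRG if and only if $t=\mu$ and
\[
X\cdot X + X\cdot X^{(-1)} \;=\; (\lambda - \mu)\,X + \mu\, C_n \qquad \text{in } \mathbb{Z}[C_n].
\]
Call this relation $(\ast)$.

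Applying the inverse involution $Y\mapsto Y^{(-1)}$ to $(\ast)$ and then adding, respectively subtracting, the two equations produces the symmetric ``partial-difference-set'' identity $D^2 = (\lambda-\mu)\,D + 2\mu\, C_n$ and the antisymmetric identity $(X - X^{(-1)})\cdot\bigl(D - (\lambda-\mu)\,e\bigr) = 0$, where $D := X + X^{(-1)}$. Set $\alpha := (\mu-\lambda)/2$. Evaluating a nontrivial character $\chi$ of $C_n$ on the first equation gives $\hat D(\chi) \in \{0,\,-2\alpha\}$, and the second equation then forces $\hat X(\chi) = 0$ whenever $\hat D(\chi) = 0$. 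Hence, for every nontrivial $\chi$, either $\hat X(\chi) = 0$, or $\hat X(\chi) = -\alpha + i\,b_\chi$ for some real $b_\chi$.

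The crux is to classify all symmetric multisets $D$ on $C_n$ taking values in $\{0,1,2\}$, with $D(e)=0$ and the two-valued spectrum above. I would invert the Fourier transform to express
\[
D(g) \;=\; \frac{1}{n}\Bigl(2|X| - 2\alpha\sum_{\chi\in N}\chi(g)^{-1}\Bigr),
\]
where $N = \{\chi\neq 1:\hat D(\chi) = -2\alpha\}$ has cardinality $|X|/\alpha$, and then impose $D(g)\in\{0,1,2\}$. Equality-in-triangle-inequality arguments then force $\{g:D(g)=0\}$ to be exactly the subgroup $H := \langle N\rangle^\perp$, and $\{g:D(g)=2\}$, when nonempty, to be a single coset $gH$ with $g^2\in H$ and $g\notin H$. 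Hence either $D = C_n - H$, which, after matching eigenvalue multiplicities, forces $H=\langle x^v\rangle$ with $v=n/(\mu-\lambda)$; or $D = C_n - H + gH$, forcing $H=\langle x^{2v}\rangle$ and $g=x^v$.

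From the shape of $D$ one immediately reads off $X\cup X^{(-1)}$ and $X\cap X^{(-1)}$, yielding conditions (i)--(ii) of Case~(a) in the first alternative and (i)--(ii) of Case~(b) in the second. In Case~(b), computing $\widehat{X + x^v X}(\chi) = (1+\chi(x^v))\hat X(\chi)$ and observing that $\chi(x^v) = -1$ for every $\chi\in N$ while $\hat X(\chi) = 0$ off $N\cup\{\chi_0\}$ shows that $X + x^v X = C_n$ in $\mathbb{Z}[C_n]$; this is exactly condition (iii). The ``if'' direction is then verified by reversing the argument: for $X$ of either prescribed shape, a direct character computation using the coset structure shows $\hat X(\chi)\in\{0\}\cup\{-\alpha+it:t\in\mathbb{R}\}$ for all nontrivial $\chi$, and inverse Fourier recovers $(\ast)$. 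The principal obstacle is the structural classification of $D$ in the third paragraph: passing from a two-valued Fourier spectrum to the rigid coset shape, which requires the interplay of the $\{0,1,2\}$-range constraint, the symmetry $D=D^{(-1)}$, and the multiplicity count $|N|=|X|/\alpha$ to rule out all other admissible support patterns.
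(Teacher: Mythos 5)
Your skeleton is the same as the paper's: reduce the DSRG condition to the single group-ring identity $\overline{X}\,(\overline{X}+\overline{X^{(-1)}})=(\lambda-\mu)\overline{X}+\mu\overline{C_n}$ together with $t=\mu$ (the paper's Lemma \ref{l-4.2}), deduce that $\chi(\overline{U_X})\in\{0,\lambda-\mu\}$ for every nonprincipal $\chi$ (Lemma \ref{l-4.6}), observe that $\chi(\overline{X})=0$ wherever $\chi(\overline{U_X})=0$ so that $X$ is a union of cosets of $H=N^{\perp}$, and read the final conditions off the shape of $U_X$. Your derivation of condition (b)(iii) from $(1+\chi(x^v))\chi(\overline{X})$ is correct and is essentially the computation closing the paper's Section 6. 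The difficulty is concentrated where you say it is: the classification of the multiset $D=X\uplus X^{(-1)}$, and there your argument has a genuine gap.

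Fourier inversion gives $D(g)=\tfrac{2\alpha}{n}\sum_{\chi\in N}\bigl(1-\overline{\chi(g)}\bigr)$ with $\alpha=(\mu-\lambda)/2$, and the equality case of the triangle inequality does settle the locus $\{g:D(g)=0\}$: it equals $N^{\perp}=H$ (this is the paper's Lemma \ref{l-3.6}(2)). But $D(g)=2$ is the condition $\sum_{\chi\in N}\bigl(1-\overline{\chi(g)}\bigr)=n/\alpha$, whereas the extremal value of that sum is $2|N|=2|X|/\alpha$; these coincide only when $|X|=n/2$, which you have not established at that stage. So equality in the triangle inequality does not force $\chi(g)=-1$ for all $\chi\in N$, and it does not show that $\{g:D(g)=2\}$ is a single coset $gH$ with $g^2\in H$, $g\notin H$. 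This is precisely where the paper works hardest: it first shows that $N$ is a union of $\mathbb{Z}_n^{*}$-orbits (via the rationality criterion, Lemma \ref{l-2.10}), evaluates $\mathcal{F}\Delta_{N}$ by Ramanujan sums, and then runs totient-sum and parity counts (Lemma \ref{l-3.9}, Cases 2.1--2.3) to eliminate every other pattern for the multiplicity-two locus; it also needs the exponent lemma machinery (Lemmas \ref{l-3.1}--\ref{l-3.5}, \ref{l-4.9}) to obtain the coset structure before passing to the quotient. Until you supply an argument of comparable force for this classification --- in particular one that rules out configurations with $|X|>n/2$ and a multiplicity-two locus that is not a single $H$-coset over the involution of $C_n/H$ --- the ``only if'' direction of the theorem is not proved.
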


\begin{rmk}The Lemma 2.4 in \cite{MI} asserts that the dihedrants $Dih(n,\sigma(X),x^b\sigma(X))$ and $Dih(n,X,X)$ are isomorphic for any $\sigma\in \mathbf{Aut}(C_n)$ and $b\in \mathbb{Z}_n$.\;Hence,\;we actually characterize all the  directed strongly regular dihedrants $Dih(n,\sigma(X),x^b\sigma(X))$ for any $\sigma\in \mathbf{Aut}(C_n)$ and $b\in \mathbb{Z}_n$.\;
\end{rmk}
\section{Preliminary}

\subsection{Properties of DSRG}
Duval \cite{A} developed necessary conditions on the parameters of $(n,k,\mu,\lambda,t)$-DSRG and calculated the spectrum of a DSRG.\;
\begin{proposition}(see \cite{A})\label{p-2.1}
A DSRG with parameters $( n,k,\mu ,\lambda ,t)$ with $0<t<k$ satisfy
\begin{equation}\label{2.1}
k(k+(\mu-\lambda ))=t+\left(n-1\right)\mu,
\end{equation}
\[{{d}^{2}}={{\left( \mu -\lambda  \right)}^{2}}+4\left(t-\mu\right)\text{,}d|2k-(\lambda-\mu)(n-1),\]
\[\frac{2k-(\lambda-\mu)(n-1)}{d}\equiv n-1(mod\hspace{2pt}2)\text{,}\left|\frac{2k-(\lambda-\mu)(n-1)}{d}\right|\leqslant n-1,\]
where d is a positive integer, and
$$ 0\leqslant \lambda <t<k,0<\mu\leqslant t<k,-2\left( k-t-1 \right)\leqslant \mu -\lambda \leqslant 2\left( k-t \right).$$
\end{proposition}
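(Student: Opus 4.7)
The idea is to extract information from the two defining matrix equations for a DSRG, namely $AJ = JA = kJ$ and $A^{2} = tI + \lambda A + \mu(J-I-A)$, by first feeding them the all-ones vector $\mathbf{1}$ (to get the scalar identity) and then restricting them to $\mathbf{1}^{\perp}$ (to get the spectrum and the arithmetic conditions on $d$).

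First I would establish identity (2.1). Applying $A^{2}$ to $\mathbf{1}$ and using $A\mathbf{1}=k\mathbf{1}$ gives $A^{2}\mathbf{1}=k^{2}\mathbf{1}$ on the left. On the right, $(tI + \lambda A + \mu(J-I-A))\mathbf{1} = \big((t-\mu) + k(\lambda-\mu) + n\mu\big)\mathbf{1}$. Equating scalars and rearranging yields $k(k+\mu-\lambda) = t + (n-1)\mu$.

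Next I would extract the spectrum. Rewrite the defining equation as $A^{2} - (\lambda-\mu)A - (t-\mu)I = \mu J$. Since $J$ vanishes on $\mathbf{1}^{\perp}$, the restriction of $A$ to $\mathbf{1}^{\perp}$ satisfies the scalar polynomial $x^{2} - (\lambda-\mu)x - (t-\mu) = 0$, whose roots are $\theta_{\pm} = \tfrac{1}{2}\big((\lambda-\mu)\pm d\big)$ with $d = \sqrt{(\mu-\lambda)^{2} + 4(t-\mu)}$. Together with the Perron eigenvalue $k$ on $\mathbf{1}$, the spectrum is $\{k^{1},\theta_{+}^{m_{+}},\theta_{-}^{m_{-}}\}$ with $m_{+}+m_{-}=n-1$. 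The trace condition $\operatorname{tr}(A)=0$ (no self-loops) gives $k + m_{+}\theta_{+} + m_{-}\theta_{-} = 0$; combining this with $m_{+}+m_{-}=n-1$ produces $(m_{+}-m_{-})\,d = -\big(2k - (\lambda-\mu)(n-1)\big)$. Since $m_{\pm}$ must be non-negative integers (with $m_{+}\neq m_{-}$ in the generic case), this forces $d$ to be a positive integer dividing $2k - (\lambda-\mu)(n-1)$; forces the quotient $\big(2k-(\lambda-\mu)(n-1)\big)/d$ to have the same parity as $n-1$ (because it equals $\pm(m_{+}-m_{-})$, which has the same parity as $m_{+}+m_{-}$); and forces its absolute value to be at most $n-1$.

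Finally, the parameter inequalities: $\lambda,\mu\geq 0$ are immediate since they are path counts, and $t<k$ is the genuine-DSRG hypothesis. I would derive $\mu \leq t$ and $\lambda < t$ by pigeonholing the $t$ undirected-edge neighbors at a vertex among the in- and out-neighborhoods of another vertex, and $\mu > 0$ via a short connectivity argument (if $\mu=0$ then $J-I-A$ and $A^{2}$ share no support, forcing the graph to split in a way incompatible with $0<t<k$). The two-sided bound $-2(k-t-1)\leq \mu-\lambda \leq 2(k-t)$ is the most delicate piece and I anticipate it to be the main obstacle: the standard approach is to fix an arc $x\to y$ (for the upper bound) or a non-arc $x\not\to y$ (for the lower bound) and count length-$2$ witnesses by whether each of the two consecutive arcs lies inside an undirected pair, then apply inclusion-exclusion on the in- and out-neighborhoods. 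The sharp constants $2(k-t)$ and $2(k-t-1)$ only emerge after carefully tracking the marginal cases where the intermediate vertex coincides with $x$ or $y$, which is where the asymmetric ``$-1$'' on the lower bound appears.
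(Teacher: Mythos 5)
The paper never proves this proposition; it is quoted from Duval's paper \cite{A} without argument, so your proposal can only be measured against the standard spectral proof, and in outline yours is exactly that: identity (\ref{2.1}) from the all-ones vector, the quadratic $x^{2}-(\lambda-\mu)x-(t-\mu)=0$ satisfied by $A$ on $\mathbf{1}^{\perp}$, and the arithmetic conditions from the multiplicities. Your first two steps are correct. The problem is the trace step. With $\theta_{\pm}=\tfrac{1}{2}\bigl((\lambda-\mu)\pm d\bigr)$ and $m_{+}+m_{-}=n-1$, the condition $k+m_{+}\theta_{+}+m_{-}\theta_{-}=0$ gives $(\lambda-\mu)(n-1)+d\,(m_{+}-m_{-})=-2k$, i.e.\ $(m_{+}-m_{-})\,d=-\bigl(2k+(\lambda-\mu)(n-1)\bigr)$, not $-\bigl(2k-(\lambda-\mu)(n-1)\bigr)$ as you assert. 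So what your method actually establishes is that $d$ divides $2k-(\mu-\lambda)(n-1)$, that this quotient has the parity of $n-1$, and that its absolute value is at most $n-1$. The discrepancy is not cosmetic, because the proposition as printed is false as stated: the DSRG with parameters $(6,2,1,0,1)$ exists (two oppositely oriented triangles joined by a perfect matching of undirected edges), it has $d=1$, and $2k-(\lambda-\mu)(n-1)=4+5=9$, which violates the printed bound $9\leqslant n-1=5$; the corrected quantity $2k-(\mu-\lambda)(n-1)=-1=m_{\sigma}-m_{\rho}$ passes all three tests. In other words, a correct execution of your plan proves the statement with $\mu-\lambda$ in place of $\lambda-\mu$ in the last three conditions (and exposes a sign typo inherited by the paper), whereas your write-up inserts a compensating sign error in order to land on the printed formula. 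You should flag the typo rather than reproduce it.

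There are two further gaps. First, the parenthetical ``with $m_{+}\neq m_{-}$ in the generic case'' conceals the one place where the hypothesis $0<t<k$ is genuinely needed: if $d$ were irrational (or imaginary), $\theta_{+}$ and $\theta_{-}$ are algebraic (or complex) conjugates, hence $m_{+}=m_{-}$, and the trace argument yields no integrality at all. One must exclude this: $m_{+}=m_{-}$ forces $2k=(\mu-\lambda)(n-1)$, hence $\mu-\lambda\in\{1,2\}$ since $k\leqslant n-1$; the case $\mu-\lambda=2$ gives $k=n-1$, a complete graph with $t=k$; the case $\mu-\lambda=1$ gives $t=k(k+1-2\mu)$ by (\ref{2.1}), and then $t>0$ forces $2\mu\leqslant k$ while $t<k$ forces $2\mu>k$, a contradiction. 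Without this, ``$d$ is a positive integer'' is unproved. Second, the inequalities $0\leqslant\lambda<t$, $0<\mu\leqslant t$ and $-2(k-t-1)\leqslant\mu-\lambda\leqslant 2(k-t)$ are only announced as plans, not proved. Your counting scheme is the right one: fixing an arc $x\to y$ with $y\not\to x$ (which exists precisely because $t<k$) and decomposing in/out-neighborhoods into undirected and purely directed parts does give $\mu-\lambda\leqslant 2(k-t)$, and the asymmetric ``$-1$'' in the lower bound indeed comes from the fact that $x$ itself lies in the purely-in part of $N^{-}(y)$ and $y$ in the purely-out part of $N^{+}(x)$, so both sets lose one usable element. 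But as submitted this part is a forecast of a proof, not a proof; also, $\mu>0$ follows more cleanly from (\ref{2.1}) than from connectivity, since $\mu=0$ gives $k(k-\lambda)=t\leqslant k$, forcing $t=0$ or $t=k$.
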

\begin{rmk}If $0<t=\mu<k$,\;then $\lambda-\mu<0$.\;
\end{rmk}
\begin{proposition}(see \cite{A})\label{p-2.2}
A DSRG with parameters $( n, k, \mu ,\lambda , t)$ has three distinct integer eigenvalues
$$ k>\rho =\frac{1}{2}\left( -\left( \mu -\lambda  \right)+d \right)>\sigma =\frac{1}{2}\left( -\left( \mu -\lambda  \right)-d \right),\; $$
The multiplicities are
$$  1,\;m_\rho=-\frac{k+\sigma \left( n-1 \right)}{\rho -\sigma }\text{\;and\;}m_\sigma=\frac{k+\rho \left( n-1 \right)}{\rho -\sigma },\;$$
respectively.\;
\end{proposition}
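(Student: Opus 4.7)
The plan is to read off the entire spectrum from the two matrix identities characterising a DSRG, namely $JA = AJ = kJ$ and $A^{2} = tI + \lambda A + \mu(J - I - A)$. First, the condition $AJ = kJ$ says the row sums of $A$ are $k$, so the all-ones vector $\mathbf{1}$ is a right eigenvector with eigenvalue $k$; dually $JA = kJ$ gives $\mathbf{1}^{T}A = k\mathbf{1}^{T}$, which shows that $V_{0} := \mathbf{1}^{\perp} = \ker J$ is $A$-invariant and that $A$ restricted to it carries all remaining spectral information.

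On $V_{0}$ we have $Jv = 0$, so the matrix identity collapses to the one-variable relation $A^{2}v = (t-\mu)v + (\lambda - \mu)Av$. Consequently, any eigenvalue $\theta$ of $A|_{V_{0}}$ satisfies
\[\theta^{2} + (\mu-\lambda)\theta - (t-\mu) = 0,\]
whose roots are precisely $\rho = \tfrac{1}{2}(-(\mu-\lambda)+d)$ and $\sigma = \tfrac{1}{2}(-(\mu-\lambda)-d)$ with $d^{2} = (\mu-\lambda)^{2}+4(t-\mu)$. This confines the spectrum of $A$ to the three values $k, \rho, \sigma$, which are pairwise distinct under the genuine-DSRG hypothesis (so that $d>0$, already guaranteed by Proposition~\ref{p-2.1}, and neither $\rho$ nor $\sigma$ equals $k$ because $k$ is a simple eigenvalue from a strongly connected component of positive regularity).

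For the multiplicities $m_{\rho}$ and $m_{\sigma}$, I would extract two linear equations from traces: counting dimensions gives $1 + m_{\rho} + m_{\sigma} = n$, and the fact that a DSRG has no loops (so $\mathrm{tr}\,A = 0$) gives $k + m_{\rho}\rho + m_{\sigma}\sigma = 0$. Solving this $2\times 2$ linear system yields exactly
\[m_{\rho} = -\frac{k+\sigma(n-1)}{\rho-\sigma}, \qquad m_{\sigma} = \frac{k+\rho(n-1)}{\rho-\sigma},\]
as claimed.

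The main obstacle is justifying the two extra adjectives in the statement: \emph{distinct} and \emph{integer}. Distinctness reduces to $d \ne 0$ and to ruling out $\rho = k$ or $\sigma = k$, both of which can be checked by plugging into the quadratic and using $0<t<k$. Integrality is forced a posteriori by the multiplicity formulas above: $m_{\rho}$ and $m_{\sigma}$ must be non-negative integers summing to $n-1$, and if $\rho, \sigma$ were a pair of irrational Galois conjugates (i.e.\ $d \notin \mathbb{Z}$) they would be forced to have equal multiplicities, which when combined with the explicit expressions $m_{\rho}-m_{\sigma} = -\frac{2k+(\rho+\sigma)(n-1)}{\rho-\sigma}$ and $k>0$ leads to a contradiction. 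Hence $d \in \mathbb{Z}_{\ge 0}$ and $\rho, \sigma \in \mathbb{Z}$, completing the proof.
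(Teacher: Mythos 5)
First, a point of comparison: the paper never proves Proposition~\ref{p-2.2} at all --- it is quoted from Duval \cite{A} --- so your proposal can only be measured against the standard argument, which is indeed the one you outline: split $\mathbb{C}^n=\langle\mathbf{1}\rangle\oplus\ker J$, note both summands are $A$-invariant, collapse the defining identity on $\ker J$ to the quadratic $\theta^2+(\mu-\lambda)\theta-(t-\mu)=0$, and solve the two trace equations $1+m_\rho+m_\sigma=n$ and $k+m_\rho\rho+m_\sigma\sigma=0$ for the multiplicities. All of that is correct, and your distinctness claims are easily patched: $d^2=(\mu-\lambda)^2+4(t-\mu)>0$ for a genuine DSRG (if $t=\mu$ then $\lambda<\mu$), and $k\in\{\rho,\sigma\}$ would force $n\mu=0$ upon comparing the quadratic with \eqref{2.1}, contradicting $\mu>0$.

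The genuine gap is the integrality step. If $d\notin\mathbb{Z}$ then $\rho,\sigma$ are conjugate quadratic irrationals and, as you say, $m_\rho=m_\sigma$; but this does \emph{not} contradict $k>0$. By your own formula, $m_\rho=m_\sigma$ is equivalent to $2k=(n-1)(\mu-\lambda)$, an equation perfectly compatible with $k>0$: conference strongly regular graphs (e.g.\ Paley graphs, which sit in the excluded case $t=k$) satisfy exactly this with irrational eigenvalues, so no argument that never invokes $t<k$ can close this case. The correct finish uses $0<t<k$, integrality of the parameters, and \eqref{2.1}: since $k\leqslant n-1$, the relation $2k=(n-1)(\mu-\lambda)$ forces $\mu-\lambda\in\{1,2\}$. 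If $\mu-\lambda=2$, then $k=n-1$, so $A=J-I$ and hence $t=k$, excluded. If $\mu-\lambda=1$, then $n-1=2k$ and \eqref{2.1} gives $t=k(k+1-2\mu)$, an integer multiple of $k$, which is incompatible with $0<t<k$. Hence $d\in\mathbb{Z}$; finally $d\equiv\mu-\lambda\pmod 2$ (because $d^2-(\mu-\lambda)^2=4(t-\mu)$), so $\rho$ and $\sigma$ are integers. Your write-up is missing exactly this case analysis, and the contradiction you assert in its place does not exist.
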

\begin{proposition}(see \cite{A})\label{p-2.3}
If $G$ is a DSRG with parameters $( n, k, \mu ,\lambda , t)$,\;then the complementary $G'$ is also a DSRG with parameters $(n',k',\mu',\;\lambda' ,t')$,\;where $k'=(n-2k)+(k-1)$,\;$\lambda'=(n-2k)+(\mu-2)$,\;$t'=(n-2k)+(t-1)$,\;$\mu'=(n-2k)+\lambda$.\;
\end{proposition}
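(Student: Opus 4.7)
The plan is to use the matrix characterization of a DSRG stated in the introduction: a digraph with adjacency matrix $A$ is an $(n,k,\mu,\lambda,t)$-DSRG if and only if (i) $JA=AJ=kJ$ and (ii) $A^2=tI+\lambda A+\mu(J-I-A)$. Since the complement $G'$ has adjacency matrix $A'=J-I-A$, I would verify both conditions for $A'$ by direct matrix manipulation, using the corresponding identities for $A$.

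First I would handle the regularity condition. Using $JA=AJ=kJ$ and $J^2=nJ$, a short calculation gives
\[A'J=(J-I-A)J=nJ-J-kJ=(n-1-k)J,\]
and symmetrically $JA'=(n-1-k)J$. Thus $G'$ is $k'$-regular with $k'=n-1-k=(n-2k)+(k-1)$, which matches the claimed value.

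Next I would compute $(A')^2$ and match it against the target expression $t'I+\lambda'A'+\mu'(J-I-A')$. Expanding $(J-I-A)^2$ term by term and using $J^2=nJ$, $JA=AJ=kJ$ yields
\[(A')^2=(n-2-2k)J+I+2A+A^2.\]
Substituting the DSRG identity for $A^2$ and collecting in the basis $\{I,J,A\}$ gives
\[(A')^2=(n-2-2k+\mu)J+(1+t-\mu)I+(2+\lambda-\mu)A.\]
On the other side, writing $A=J-I-A'$, the target $t'I+\lambda'A'+\mu'(J-I-A')$ becomes $(t'-\lambda')I+\lambda'J+(\mu'-\lambda')A$. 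Equating the coefficients of $J$, $I$ and $A$ produces three linear relations which solve uniquely to $\lambda'=(n-2k)+(\mu-2)$, $\mu'=(n-2k)+\lambda$ and $t'=(n-2k)+(t-1)$, exactly as asserted.

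The main obstacle is essentially non-existent: the argument is pure bookkeeping inside the algebra spanned by $\{I,J,A\}$. The only care needed is to avoid sign errors when expanding $(J-I-A)^2$ and to rewrite the target identity in the same basis before equating coefficients; once that is done, the values of $k',\lambda',\mu',t'$ are forced, and no deeper structural property of $G$ is used beyond the two defining matrix identities.
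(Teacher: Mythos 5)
Your proof is correct: the expansion $(A')^2=(n-2-2k)J+I+2A+A^2$ is right, and substituting the DSRG identity for $A^2$ and comparing with $t'I+\lambda'A'+\mu'(J-I-A')=(t'-\lambda')I+\lambda'J+(\mu'-\lambda')A$ does yield exactly the stated values of $k',\lambda',\mu',t'$. Note that the paper offers no proof of this proposition at all -- it is quoted from Duval's paper \cite{A} -- so there is nothing to compare against; your direct verification in the algebra spanned by $\{I,J,A\}$ is the standard argument (and essentially Duval's original one). One tiny logical remark: you do not need the coefficients to be \emph{uniquely} determined (which would require linear independence of $I,J,A$); it suffices to check that the claimed parameter values make the identity $(A')^2=t'I+\lambda'A'+\mu'(J-I-A')$ hold, which your computation already does.
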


\begin{dfn}\label{d-2}(Group Ring) For any finite (multiplicative) group $G$ and ring $R$,\;the \emph{Group Ring} $R[G]$ is defined as the set of all formal sums of elements of $G$,\;with coefficients from $R$.\;i.e.,
\[R[G]=\left\{\sum_{g\in G}r_gg|r_g\in R,\; r_g\neq 0\text{ for finite g}\right\}.\;\]
The operations $+$ and $\cdot$ on $R[G]$ are given
by
\[\sum_{g\in G}r_gg+\sum_{g\in G}s_gg=\sum_{g\in G}(r_g+s_g)g,\;\]
\[\left(\sum_{g\in G}r_gg\right)\cdot\left(\sum_{g\in G}s_gg\right)=\left(\sum_{g\in G}t_gg\right),\;t_g=\sum_{g'g''=g}r_{g'}s_{g''}.\]
\end{dfn}

For any multisubset $X$ of $G$,\;Let $\overline{X}$ denote the element of the group ring $R[G]$ that is the sum of all elements of $X$.\;i.e., \[\overline{X}=\sum_{x\in X}\Delta_X(x)x.\]
In particular,\;if $X$ is a subset of $G$,\;then
\[\overline{X}=\sum_{x\in X}x.\]
The Lemma below allows us to express a sufficient and necessary condition for a Cayley graph to be  strongly regular in terms of group ring $\mathbb{Z}[G]$.
\begin{lem}\label{l-2.6}
The Cayley graph $\mathbf{Cay}(G, S)$ of $G$ with respect to $S$ is a DSRG with parameters $(n, k, \mu,  \lambda, t)$ if and only if $|G| = n$, $|S|= k$, and
\[\overline{S}^2=te+\lambda\overline{S}+\mu(\overline{G}-e-\overline{S}).\]
\end{lem}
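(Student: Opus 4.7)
}
The plan is to translate the group ring identity into a matrix identity for the adjacency matrix via the right regular representation of $G$. Index the rows and columns of $n\times n$ matrices by elements of $G$, and for each $g\in G$ let $P_g$ denote the permutation matrix defined by $(P_g)_{h,k}=1$ iff $k=hg$. Extend linearly to obtain a ring homomorphism $\varphi\colon \mathbb{Z}[G]\to M_n(\mathbb{Z})$ sending $\sum_g r_g g$ to $\sum_g r_g P_g$. First I would verify the three basic correspondences: $\varphi(e)=I$, $\varphi(\overline{G})=\sum_{g\in G}P_g=J$ (since every $(h,k)$ entry is $1$ in exactly one $P_g$), and most importantly $\varphi(\overline{S})=A$, the adjacency matrix of $\mathbf{Cay}(G,S)$. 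For the latter, observe that $(\sum_{s\in S}P_s)_{g,h}=1$ iff $h=gs$ for some $s\in S$, iff $g^{-1}h\in S$, which is exactly the adjacency condition.

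Next, I would apply $\varphi$ to the identity $\overline{S}^2=te+\lambda\overline{S}+\mu(\overline{G}-e-\overline{S})$ and use that $\varphi$ is a ring homomorphism to obtain $A^2=tI+\lambda A+\mu(J-I-A)$; conversely, since $\varphi$ is injective (the matrices $P_g$ for $g\in G$ are linearly independent), a matrix identity of this form pulls back to the group ring identity. Thus the second defining condition for a DSRG, namely $A^2=tI+\lambda A+\mu(J-I-A)$, is equivalent to the stated group ring equation.

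It remains to handle the regularity condition $JA=AJ=kJ$. For any Cayley graph, $A$ has constant row sum $|S|$ (the number of $h$ with $g^{-1}h\in S$ equals $|S|$ for every $g$) and constant column sum $|S|$ as well, so $JA=AJ=|S|J$ automatically. Hence requiring $|S|=k$ is equivalent to the regularity condition, and combined with the previous paragraph this completes the equivalence with the definition given in the introduction. I expect no real obstacle; the only thing to be careful about is choosing the correct (right) regular representation so that the multiplication in $\mathbb{Z}[G]$ matches the direction convention $g\to h\iff g^{-1}h\in S$ used for the Cayley graph.
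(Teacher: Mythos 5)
Your argument is correct and complete: the right regular representation $g\mapsto P_g$ is indeed a ring homomorphism sending $e\mapsto I$, $\overline{G}\mapsto J$, $\overline{S}\mapsto A$ under the convention $g\to h\iff g^{-1}h\in S$, it is injective, and every Cayley graph is automatically $|S|$-regular in both directions, so the group ring identity is equivalent to conditions (i) and (ii) of the matrix definition. The paper states Lemma \ref{l-2.6} without proof as a standard fact, and your proposal is exactly the standard justification it implicitly relies on.
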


\subsection{Character theory of abelian groups}
A \emph{character} $\chi$ of a finite abelian group is a homomorphism from $G$ to $\mathbb{C}^*$,\;the multiplicative group of $\mathbb{C}$.\;All characters of $G$ form a group under the multiplication $\chi\chi'(g)=\chi(g)\chi'(g)$ for any $g\in G$,\;which is denoted by $\widehat{G}$ and it is called the \emph{character group} of ${G}$.\;It is easy to see that $\widehat{G}$ is isomorphic to $G$.\;Every character $\chi\in\widehat{G}$ of $G$ can be extended to a homomorphism from $\mathbb{C}[G]$ to $\mathbb{C}$  by\;
\[\chi\left(\sum_{g\in G}a_gg\right)=\sum_{g\in G}a_g\chi(g).\]

We also have the following Lemma.\;
\begin{lem}\label{l-2.7}(Inversion formula)Let $G$ be an abelian group.\;Let $A,B\in \mathbb{C}[G]$,\;then $A=B$ if and only if
$\chi(A)=\chi(B)$  for any $\chi\in \widehat{G}$,\;
\end{lem}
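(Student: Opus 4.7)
The forward direction is immediate: since each character $\chi$ has been extended to a ring homomorphism (or at least a linear map) from $\mathbb{C}[G]$ to $\mathbb{C}$, the equality $A=B$ in $\mathbb{C}[G]$ automatically forces $\chi(A)=\chi(B)$ for every $\chi\in\widehat{G}$. So the entire content of the lemma lies in the converse, and the plan is to reduce it to the standard orthogonality relations for characters of a finite abelian group.

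For the converse, I would set $C=A-B\in\mathbb{C}[G]$ and write $C=\sum_{g\in G}c_g\,g$ with $c_g\in\mathbb{C}$. The hypothesis becomes $\chi(C)=\sum_{g\in G}c_g\,\chi(g)=0$ for every $\chi\in\widehat{G}$, and the goal is to deduce that every coefficient $c_g$ is zero. The key tool is the first orthogonality relation, which I would invoke (or briefly derive) in the form
\[
\sum_{\chi\in\widehat{G}}\chi(g)=\begin{cases}|G|,& g=e,\\ 0,& g\neq e,\end{cases}
\]
equivalently $\sum_{\chi\in\widehat{G}}\chi(h^{-1}g)=|G|\,\delta_{h,g}$ for $g,h\in G$, using that $\widehat{G}\cong G$ is also a finite abelian group and that a non-trivial character sums to zero over $G$ (and vice versa by symmetry).

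With orthogonality in hand, the rest is a short computation: fix $h\in G$, multiply each hypothesis equation $\chi(C)=0$ by $\chi(h^{-1})=\overline{\chi(h)}$, and sum over $\chi\in\widehat{G}$. Swapping the order of summation gives
\[
0=\sum_{\chi\in\widehat{G}}\chi(h^{-1})\chi(C)=\sum_{g\in G}c_g\sum_{\chi\in\widehat{G}}\chi(h^{-1}g)=|G|\,c_h,
\]
so $c_h=0$. Since $h\in G$ was arbitrary, $C=0$, i.e.\ $A=B$.

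There is no real obstacle here; the lemma is a direct consequence of character orthogonality once the right linear combination is formed. The only point to keep in mind is that $\chi$ has been extended $\mathbb{C}$-linearly to $\mathbb{C}[G]$, so the manipulations $\chi(A-B)=\chi(A)-\chi(B)$ and the interchange of the two finite sums are legitimate. If a self-contained treatment is desired, I would prove the orthogonality relation first by noting that for $g\neq e$ there exists $\chi_0\in\widehat{G}$ with $\chi_0(g)\neq 1$, and then the substitution $\chi\mapsto\chi_0\chi$ permutes $\widehat{G}$ and forces $\chi_0(g)\sum_\chi\chi(g)=\sum_\chi\chi(g)$, hence the sum vanishes; otherwise we may simply cite the result from standard references on representation theory of finite abelian groups.
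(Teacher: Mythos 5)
Your proof is correct. The paper itself states Lemma 2.7 without proof, treating it as a standard fact from character theory of finite abelian groups, so there is no in-paper argument to compare against; your orthogonality-based argument (reduce to $C=A-B$, multiply $\chi(C)=0$ by $\chi(h^{-1})$, sum over $\widehat{G}$, and apply $\sum_{\chi\in\widehat{G}}\chi(g)=|G|\,\delta_{g,e}$) is exactly the standard proof one would supply, and every step is valid. The only point worth flagging is that the lemma as stated says merely ``abelian group,'' while your argument (finite sums over $\widehat{G}$, orthogonality, $\widehat{G}\cong G$) requires $G$ to be finite; this is harmless here because the paper works exclusively with finite groups, but you should state the finiteness hypothesis explicitly, since for infinite abelian groups the group-ring elements still have finite support yet the sum over $\widehat{G}$ you form would not make sense as written.
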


Let $\zeta_n$ be a fixed primitive $n$-th root of unity,\;then
$\widehat{C_n}=\{\chi_j|j\in \mathbb{Z}_n\}$,\;where $\chi_j(x^i)=\zeta_n^{ij}$ for $0\leqslant i,j\leqslant n-1$.\;The character $\chi_0$ is called  the principal character of $\widehat{C_n}$ and the characters $\chi_1,\chi_2,\cdots,\chi_{n-1}$ are  called the nonprincipal characters of $\widehat{C_n}$.\;Let $v$ be a divisor of $n$,\;and let $\pi_{v}$ be the natural projection from $C_n=\langle x\rangle$ to the quotient group $\langle x\rangle/\langle x^{v}\rangle=\langle \pi_{v}(x)\rangle$,\;this quotient group is a cyclic group of order $\frac{n}{v}$ generated by $\pi_{v}(x)$ and  $\langle x\rangle/\langle x^{v}\rangle=\{\pi_{v}(e),\pi_{v}(x),\cdots,\pi_{v}(x^{v-1})\}$.\;

\begin{lem}\label{l-2.8}There is a bijective correspondence between the set of characters of $\langle x\rangle/\langle x^{v}\rangle=\langle \pi_{v}(x)\rangle$ and the set of characters of $C_n$ such that $\langle x^{v}\rangle\subseteq \ker\chi$.\;i.e.,\;
\[\widehat{\langle x\rangle/\langle x^{v}\rangle}\leftrightarrow\{\chi\in \widehat{C_n}|\langle x^{v}\rangle\subseteq\ker\chi\}=\{\chi_{\frac{n}{v}j}\in \widehat{C_n}|0\leqslant j\leqslant v-1\}.\]
\end{lem}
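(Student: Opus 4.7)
The plan is to exhibit an explicit bijection via the universal property of the quotient, and then identify the image set by a direct root-of-unity computation. This is a standard lifting/factoring argument, so the work is really just careful bookkeeping rather than any deep idea.

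First, I will construct the forward map. Given a character $\psi \in \widehat{\langle x\rangle/\langle x^v\rangle}$, define $\tilde{\psi} \in \widehat{C_n}$ by $\tilde{\psi} = \psi \circ \pi_v$. Since $\pi_v$ is a group homomorphism and $\psi$ is a character, $\tilde{\psi}$ is a character of $C_n$. Moreover, $\tilde{\psi}(x^v) = \psi(\pi_v(x^v)) = \psi(e) = 1$, and more generally $\tilde{\psi}$ is trivial on the whole subgroup $\langle x^v\rangle$, so $\langle x^v\rangle \subseteq \ker \tilde{\psi}$. Injectivity of $\psi \mapsto \tilde{\psi}$ is immediate from the surjectivity of $\pi_v$: if $\psi_1 \circ \pi_v = \psi_2 \circ \pi_v$, then $\psi_1$ and $\psi_2$ agree on every element of the quotient.

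Next, I will show surjectivity onto $\{\chi \in \widehat{C_n} : \langle x^v\rangle \subseteq \ker \chi\}$. Given such a $\chi$, the condition $\langle x^v\rangle \subseteq \ker\chi$ means $\chi$ is constant on each coset of $\langle x^v\rangle$, so the rule $\psi(\pi_v(g)) := \chi(g)$ is a well-defined map on $\langle x\rangle/\langle x^v\rangle$. It is a homomorphism to $\mathbb{C}^*$ since $\chi$ is, and clearly $\tilde{\psi} = \chi$. This proves the bijection between $\widehat{\langle x\rangle/\langle x^v\rangle}$ and $\{\chi \in \widehat{C_n} : \langle x^v\rangle \subseteq \ker\chi\}$.

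Finally, I will identify this second set explicitly. Using the parametrization $\chi_j(x^i) = \zeta_n^{ij}$, the condition $\langle x^v\rangle \subseteq \ker \chi_j$ is equivalent to $\chi_j(x^v) = \zeta_n^{vj} = 1$, i.e.\ $n \mid vj$, i.e.\ $\tfrac{n}{v} \mid j$. Writing $j = \tfrac{n}{v}\cdot k$ with $0 \leqslant k \leqslant v-1$ (since $0 \leqslant j \leqslant n-1$) gives exactly the $v$ characters $\chi_{\frac{n}{v}k}$ listed in the statement, and we are done. No real obstacle appears; the only point to be careful about is the index range when listing the characters $\chi_{\frac{n}{v}j}$, which matches the order $|\widehat{\langle x\rangle/\langle x^v\rangle}| = v$ as expected.
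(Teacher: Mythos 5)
Your proposal is correct and follows essentially the same route as the paper: both establish the bijection by composing with (respectively, factoring through) the projection $\pi_v$, using that a character trivial on $\langle x^v\rangle$ is constant on its cosets. You additionally carry out the explicit identification of the image as $\{\chi_{\frac{n}{v}j} : 0\leqslant j\leqslant v-1\}$ via the computation $\zeta_n^{vj}=1 \Leftrightarrow \frac{n}{v}\mid j$, a detail the paper leaves implicit.
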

\begin{proof}Let $\chi\in \widehat{C_n}$ be a character of $C_n$ such that $\langle x^{v}\rangle\subseteq \ker\chi$.\;Then $\chi$ induces a character of
$\langle x\rangle/\langle x^{v}\rangle$ in the following way.\;Let
\[\chi'(\pi_{v}(x^i)\rangle)=\chi(x^i)\]
for $0\leqslant i\leqslant v-1$.\;This definition is well-defined and it is clear that $\chi'$ is a character of $\langle x\rangle/\langle x^{v}\rangle$.\;Conversely,\;if $\chi'$ is a character of $\langle x\rangle/\langle x^{v}\rangle$,\;then we let
\[\chi(x^i)=\chi'(\pi_{v}(x^i))\]
for $0\leqslant i\leqslant n-1$,\;then $\chi$ becomes a character of $C_n$ with $\langle x^{v}\rangle\subseteq \ker\chi$.\;This completes the proof.\;
\end{proof}
\begin{rmk}Furthermore,\;if $\widehat{\langle x\rangle/\langle x^{v}\rangle}=\{\chi_0',\chi_1',\cdots,\chi_{v-1}'\}$,\;then $\chi_j'\in \widehat{\langle x\rangle/\langle x^{v}\rangle}$ is the character induced by the character $\chi_{\frac{n}{v}j}\in \widehat{C_n}$,\;for $0\leqslant j\leqslant v-1$.\;
\end{rmk}
\subsection{Fourier Transformation}
Throughout this subsection $n$ will denote a fixed positive integer,\;$\mathbb{Z}_n=\{0,1,\cdots,n-1\}$ is the modulo $n$ residue class ring.\;Recall for a positive divisor $v$ of $n$,\;let $v\mathbb{Z}_n=\{0,v,2v,\cdots,n-v\}$ be the subgroup of the additive group of $\mathbb{Z}_n$.\;

The following statement and  notations  are coincided with \cite{MI}.\;Let $\mathbb{Z}_n^{\ast}$  be the multiplicative group
of units in the ring $\mathbb{Z}_n$.\;Then $\mathbb{Z}_n^{\ast}$ has an action on $\mathbb{Z}_n$ by multiplication and hence $\mathbb{Z}_n$ is a union of some  $\mathbb{Z}_n^{\ast}$-orbits.\;Each $\mathbb{Z}_n^{\ast}$-orbit consists of all elemnents of a given order in the additive group $\mathbb{Z}_n$.\;If $v$ is a positive divisor of $n$,\;we denote the $\mathbb{Z}_n^{\ast}$-orbit containing all elements of order $v$ with $\mathcal{O}_v$.\;Thus
\[\mathcal{O}_v=\left\{z\bigg|z\in \mathbb{Z}_n,\;\frac{n}{(n,z)}=v\right\}=\left\{c{\frac{n}{v}}\bigg|1\leqslant c\leqslant v,(v,c)=1\right\}\]
and $|\mathcal{O}_v|=\varphi(v)$.

Let $\zeta_n$ be a fixed primitive $n$-th root of unity and $\mathbb{F}=\mathbb{Q}(\zeta_n)$ the $n$-th
\emph{cyclotomic field} over $\mathbb{Q}$.\;Further,\;let $\mathbb{F}^{\mathbb{Z}_n}$ be the $\mathbb{F}$-vector space of all
functions $f:\mathbb{Z}_n\rightarrow\mathbb{F}$ mapping from the residue class ring $\mathbb{Z}_n$ to the field $\mathbb{F}$ (with the
scalar multiplication and addition defined point-wise).\;The $\mathbb{F}$-algebra obtained from $\mathbb{F}^{\mathbb{Z}_n}$ by
defining the multiplication point-wise will be denoted by $(\mathbb{F}^{\mathbb{Z}_n},\cdot)$.\;The $\mathbb{F}$-algebra obtained from $\mathbb{F}^{\mathbb{Z}_n}$ by defining the multiplication as \emph{convolution} will be denoted by $(\mathbb{F}^{\mathbb{Z}_n},\ast)$,\;where the convolution is defined by:\;$(f\ast g)(z)=\sum_{i\in \mathbb{Z}_n}f(i)g(z-i)$.\;The \emph{Fourier transformation} which is an isomorphsim of $\mathbb{F}$-algebra $(\mathbb{F}^{\mathbb{Z}_n},\ast)$ and $(\mathbb{F}^{\mathbb{Z}_n},\cdot)$ is defined by
\begin{equation}\label{2.2}
\mathcal{F}:(\mathbb{F}^{\mathbb{Z}_n},\ast)\rightarrow (\mathbb{F}^{\mathbb{Z}_n},\cdot),\;\;\;\;(\mathcal{F}f)(z)=\sum_{i\in \mathbb{Z}_n}f(i)\zeta_n^{iz}.
\end{equation}
then,\;$\mathcal{F}(f\ast g)=\mathcal{F}(f)\mathcal{F}(g)$ for $f,g\in \mathbb{F}^{\mathbb{Z}_n}$.\;It also obeys the \emph{inversion formula}
\begin{equation}\label{2.3}
\mathcal{F}(\mathcal{F}(f))(z)=nf(-z).
\end{equation}
Observe that for any multisubsets $A$ and $B$ of $\mathbb{Z}_n$,\;the following hold.
\begin{equation}\label{2.4}
\mathcal{F}\Delta_{(-A)}=\overline{\mathcal{F}\Delta_{A}},\;\mathcal{F}\Delta_{A+B}=\mathcal{F}(\Delta_A\ast\Delta_B)=(\mathcal{F}\Delta_{A})(\mathcal{F}\Delta_{B}).
\end{equation}
Recall that $\widehat{C_n}=\{\chi_j|j\in \mathbb{Z}_n\}$,\;then
\begin{equation}\label{2.5}
(\mathcal{F}\Delta_A)(z)=\chi_z(\overline{x^A})
\end{equation}
for $z\in \mathbb{Z}_n$.\;The Fourier transformation of characteristic functions of  additive subgroups in $\mathbb{Z}_n$  can be easily computed.\;For a positive divisor $v$ of $n$,\;
\begin{equation}\label{2.6}
\mathcal{F}\Delta_{v\mathbb{Z}_n}=\frac{n}{v}\Delta_{\frac{n}{v}\mathbb{Z}_n},\;\mathcal{F}\Delta_{\mathbb{Z}_n}=n\Delta_{0},\;\text{and\;}\mathcal{F}\Delta_{0}=\Delta_{\mathbb{Z}_n}=1.
\end{equation}
The following lemma will be used in this paper.
\begin{lem}(\cite{MI})\label{l-2.10}Suppose that $f:\mathbb{Z}_n\rightarrow\mathbb{F}$ is a function such that ${\rm{Im}}(f)\subseteq \mathbb{Q}$.\;Then ${\rm{Im}}(\mathcal{F}f)\subseteq \mathbb{Q}$  if and only if $f=\sum_{v|n}\alpha_v\Delta_{\mathcal{O}_v}$ for some $\alpha_v\in\mathbb{Q}$.
\end{lem}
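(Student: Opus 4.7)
The plan is to use Galois theory of the cyclotomic field $\mathbb{F}=\mathbb{Q}(\zeta_n)$ together with the inversion formula (\ref{2.3}) to convert the rationality condition on $\mathcal{F}f$ into a $\mathbb{Z}_n^\ast$-invariance condition. Recall $\mathrm{Gal}(\mathbb{F}/\mathbb{Q})\cong \mathbb{Z}_n^\ast$ via $a\mapsto\sigma_a$, where $\sigma_a(\zeta_n)=\zeta_n^a$, and that an element of $\mathbb{F}$ is rational iff it is fixed by every $\sigma_a$. I would first compute, for any $\mathbb{Q}$-valued function $f$ and any $a\in\mathbb{Z}_n^\ast$,
\[
\sigma_a\bigl((\mathcal{F}f)(z)\bigr)=\sum_{i\in\mathbb{Z}_n}f(i)\zeta_n^{iaz}=(\mathcal{F}f)(az),
\]
where the first equality uses $\sigma_a(f(i))=f(i)$. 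Therefore $\mathrm{Im}(\mathcal{F}f)\subseteq\mathbb{Q}$ if and only if $(\mathcal{F}f)(az)=(\mathcal{F}f)(z)$ for every $a\in\mathbb{Z}_n^\ast$ and $z\in\mathbb{Z}_n$, i.e.\ iff $\mathcal{F}f$ is constant on each $\mathbb{Z}_n^\ast$-orbit $\mathcal{O}_v$.

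For the ``$\Leftarrow$'' direction, if $f=\sum_{v\mid n}\alpha_v\Delta_{\mathcal{O}_v}$ with $\alpha_v\in\mathbb{Q}$, then $f$ is $\mathbb{Q}$-valued and $f(ai)=f(i)$ for every $a\in\mathbb{Z}_n^\ast$. A substitution $i\mapsto a^{-1}j$ in the definition of $\mathcal{F}f$ gives $(\mathcal{F}f)(az)=(\mathcal{F}f)(z)$, so by the Galois criterion $\mathcal{F}f$ takes values in $\mathbb{Q}$. For ``$\Rightarrow$'', the criterion above forces $\mathcal{F}f=\sum_{v\mid n}\beta_v\Delta_{\mathcal{O}_v}$ with $\beta_v\in\mathbb{Q}$. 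Now I would apply $\mathcal{F}$ once more. Each $\Delta_{\mathcal{O}_v}$ is itself $\mathbb{Q}$-valued and constant on $\mathbb{Z}_n^\ast$-orbits, so by the substitution argument just used, $\mathcal{F}\Delta_{\mathcal{O}_v}$ is likewise constant on orbits; hence
\[
\mathcal{F}(\mathcal{F}f)=\sum_{v\mid n}\beta_v\,\mathcal{F}\Delta_{\mathcal{O}_v}
\]
is constant on $\mathbb{Z}_n^\ast$-orbits. By the inversion formula $\mathcal{F}(\mathcal{F}f)(z)=nf(-z)$, the function $z\mapsto f(-z)$ is constant on orbits, and since $-1\in\mathbb{Z}_n^\ast$ stabilizes every $\mathcal{O}_v$, so is $f$. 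Combined with $\mathrm{Im}(f)\subseteq\mathbb{Q}$, this yields $f=\sum_{v\mid n}\alpha_v\Delta_{\mathcal{O}_v}$ with $\alpha_v\in\mathbb{Q}$.

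The main conceptual step is the identification of $\mathrm{Gal}(\mathbb{F}/\mathbb{Q})$ with $\mathbb{Z}_n^\ast$ acting on $\mathbb{Z}_n$ by multiplication and the observation that $\sigma_a$ translates to the shift $z\mapsto az$ on the Fourier side; once this is in place, both implications reduce to the same substitution argument, and the potential obstacle of having to compute $\mathcal{F}\Delta_{\mathcal{O}_v}$ explicitly (as a Ramanujan sum) is bypassed by simply noting that $\mathcal{F}$ preserves the subspace of functions constant on $\mathbb{Z}_n^\ast$-orbits. The inversion formula (\ref{2.3}) is essential for the forward direction so that a structural statement about $\mathcal{F}f$ can be pushed back to a structural statement about $f$ itself.
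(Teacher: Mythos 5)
Your proof is correct, but there is nothing in the paper to compare it against: the paper imports this lemma verbatim from \cite{MI} and gives no proof of its own, so your argument stands as a self-contained replacement for the citation. Judged on its merits, every step is sound. The identity $\sigma_a\bigl((\mathcal{F}f)(z)\bigr)=(\mathcal{F}f)(az)$ for $\mathbb{Q}$-valued $f$ correctly translates rationality of $\mathcal{F}f$ into constancy on $\mathbb{Z}_n^\ast$-orbits (and constancy on orbits is precisely the condition $f=\sum_{v|n}\alpha_v\Delta_{\mathcal{O}_v}$, since the orbits are exactly the sets $\mathcal{O}_v$); the substitution $i\mapsto a^{-1}j$ gives the converse implication; and the inversion formula (\ref{2.3}) together with the fact that $-1\in\mathbb{Z}_n^\ast$ stabilizes each orbit legitimately transfers orbit-constancy from $\mathcal{F}f$ back to $f$. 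Two minor streamlinings are available. First, in the forward direction the decomposition $\mathcal{F}f=\sum_{v|n}\beta_v\Delta_{\mathcal{O}_v}$ is an unnecessary detour: since $\mathcal{F}f$ is itself $\mathbb{Q}$-valued and constant on orbits, your substitution argument applies to it directly and yields $nf(-az)=\mathcal{F}(\mathcal{F}f)(az)=\mathcal{F}(\mathcal{F}f)(z)=nf(-z)$ in one stroke. Second, your route never needs the explicit Ramanujan-sum evaluation (\ref{2.7}), which the paper records separately; bypassing it, as you note, is a genuine economy, since the equivalence is purely a statement about Galois invariance and requires no computation of character sums.
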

The value of Fourier transformation of the characteristic function of an orbit $\mathcal{O}_v$ also knows as the Ramanujan's sum,\;i.e.,\;
\begin{equation}\label{2.7}
(\mathcal{F}\Delta_{\mathcal{O}_v})(z)=\mu\left(\frac{v}{(v,z)}\right)\frac{\varphi(v)}{\varphi\left(\frac{v}{(v,z)}\right)}\in\mathbb{Z}.
\end{equation}

In this paper,\;we usually just consider the additive group of $\mathbb{Z}_n$.\;

\section{Some lemmas}
In this section,\;we prove some lemmas which will be used to characterize directed strongly regular dihedrants $Dih(n,X,X)$.\;At first,\;we need the following lemma.
\begin{lem}(\cite{Arasu1996Exponent})\label{l-3.1}Let $p$ be a prime and let $G$ be a finite abelian group with a cyclic Sylow $p$-subgroup of order $p^l$ with $l=0$ permitted. Let $P_i$ be the cyclic subgroup of order
$p^i$ for $i=0,1,\cdots,l$. Suppose $Y$ is an element of the group ring  $\mathbb{Z}[G]$ that satisfies
\[\chi(Y)\equiv 0 \;\mathrm{mod}\;p^f\]
 for some positive integer $f$ and all nonprincipal characters $\chi$ of
$G$.\;Moreover,\;if $l<f$ assume that $\chi_0(Y)\equiv 0 \;\mathrm{mod}(p^f)$ for the principal character
$\chi_0$.\;Then $Y$ can be expressed in the form
\[Y=p^f X_0+p^{f-1}\overline{P_1}X_1+\cdots+p^{f-m}\overline{P_m}X_m.\]
where $m=\min\{l,f\}$ and the $X_i$ are elements of $\mathbb{Z}[G]$.\;Furthermore,\;if the coefficients
of $Y$ are nonnegative, then the $X_i$ can be chosen to have nonnegative integer coefficients.
\end{lem}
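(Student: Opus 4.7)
The plan is to prove the lemma by induction on $l$, the exponent of the cyclic Sylow $p$-subgroup of $G$.

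For the base case $l=0$, $|G|$ is coprime to $p$ and $m=0$; the hypothesis (with the extra assumption on $\chi_0$ active because $l<f$ unless $f=0$) gives $\chi(Y)\in p^f\mathbb{Z}[\zeta_{|G|}]$ for every character of $G$. Fourier inversion then yields $|G|\,y_g\in p^f\mathbb{Z}[\zeta_{|G|}]\cap\mathbb{Q}=p^f\mathbb{Z}$, and coprimality of $|G|$ with $p$ gives $p^f\mid y_g$, so $Y=p^fX_0$ with nonnegativity inherited.

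For the inductive step with $l\geqslant 1$, I would first observe, by the same Fourier-inversion computation applied to the relation $|G|\,y_g-\chi_0(Y)\in p^f\mathbb{Z}[\zeta_{|G|}]$, that the hypothesis on nonprincipal characters alone already forces $\chi_0(Y)\equiv 0\pmod{p^{\min(l,f)}}$, so together with the lemma's extra hypothesis when $l<f$ one always has $\chi_0(Y)\equiv 0\pmod{p^f}$ whenever $l\leqslant f$. Next, consider the projection $\pi\colon\mathbb{Z}[G]\to\mathbb{Z}[\overline{G}]$ with $\overline{G}=G/P_1$, whose Sylow $p$-subgroup has order $p^{l-1}$. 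Since characters of $\overline{G}$ correspond to characters of $G$ trivial on $P_1$ via $\overline{\chi}(\pi(Y))=\chi(Y)$, the induction hypothesis applies to $\pi(Y)$---the principal-character condition needed when $l-1<f$ is exactly what the preliminary step supplies---and yields $\pi(Y)=\sum_{j=0}^{\overline{m}}p^{f-j}\,\overline{\overline{P_j}}\,\overline{X}_j$ with $\overline{m}=\min(l-1,f)$ and $\overline{P}_j=P_{j+1}/P_1$. Using the identity $\pi(\overline{P_{j+1}})=p\,\overline{\overline{P_j}}$, choose arbitrary lifts $X_{j+1}\in\mathbb{Z}[G]$ of $\overline{X}_j$ and set $R:=Y-\sum_{j=0}^{\overline{m}}p^{f-j-1}\overline{P_{j+1}}X_{j+1}$, which lies in $\ker\pi$. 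Characters of $G$ trivial on $P_1$ annihilate $R$ automatically, while characters $\chi$ nontrivial on $P_1$ satisfy $\chi(R)=\chi(Y)\equiv 0\pmod{p^f}$ because $\chi(\overline{P_{j+1}})=0$ for such $\chi$. Applying the base-case Fourier-inversion argument inside $\ker\pi=(c-1)\mathbb{Z}[G]$ (where $\langle c\rangle=P_1$) shows that $R$ can be written as $p^fX_0$ (when $l>f$) or absorbed into a new top term $p^{f-m}\overline{P_m}X_m$ (when $l\leqslant f$, giving $m=\overline{m}+1$), and this completes the decomposition with $m=\min(l,f)$.

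For the nonnegativity refinement, if $Y$ has nonnegative coefficients one iteratively cleans up the $X_i$ using the identities $\overline{P_i}\,\overline{P_j}=p^{\min(i,j)}\,\overline{P_{\max(i,j)}}$: any negative coefficient in some $X_i$ may be offset by transferring $p$ units to a neighbouring level via $\overline{P_{i+1}}=\overline{P_i}\,\overline{P_1}/p$ when $i\geqslant 1$, a procedure that terminates because the original $Y\geqslant 0$ has finite support. The main technical difficulty is the control of the residue $R\in\ker\pi$ in the lifting step: one must convert the $p$-adic divisibility of $\chi(R)$ for characters nontrivial on $P_1$ (values living in the ramified cyclotomic rings $\mathbb{Z}[\zeta_{p^j}]$, where $p$ is an associate of $(1-\zeta_{p^j})^{\varphi(p^j)}$) into a group-ring-level decomposition of $R$, and the nonnegativity bookkeeping must be woven through this conversion without losing track of $p$-adic valuations.
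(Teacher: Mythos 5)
The paper offers no proof of this lemma at all: it is quoted, with citation, from \cite{Arasu1996Exponent} (it is the chain-of-subgroups refinement of Ma's lemma), so your attempt can only be judged against the standard literature argument. Your outer skeleton is in fact the standard one, and several of your steps are correct: the base case $l=0$, the bootstrap showing that the nonprincipal hypothesis alone forces $\chi_0(Y)\equiv 0 \pmod{p^{\min(l,f)}}$ (which is exactly what makes the induction hypothesis applicable to $\pi(Y)$ in $G/P_1$), the character correspondence, and the computation of $\chi(R)$ for characters nontrivial on $P_1$.

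There are, however, two genuine gaps, and they sit at the heart of the lemma. First, your lifting formula is ill-defined precisely when $l>f$: there $\overline{m}=\min(l-1,f)=f$, so the term $j=\overline{m}$ of $\sum_{j=0}^{\overline{m}}p^{f-j-1}\overline{P_{j+1}}X_{j+1}$ carries the coefficient $p^{f-\overline{m}-1}=p^{-1}\notin\mathbb{Z}$; hence $R$ is not even defined in the case (e.g.\ $G=\mathbb{Z}_{p^2}$, $f=1$) where you would need the conclusion ``$R=p^fX_0$''. Second, and more fundamentally, the decomposition of the residue $R\in\ker\pi$ is asserted via ``the base-case Fourier-inversion argument,'' but that argument requires $p\nmid|G|$. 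For the actual $G$, inversion only yields $|G|r_g\equiv 0\pmod{p^f}$ with $p^l$ exactly dividing $|G|$, i.e.\ $p^{f-l}\mid r_g$, not $p^f\mid r_g$. Converting character divisibility into coefficient divisibility when $p$ ramifies in $\mathbb{Z}[\zeta_{p^l}]$ is the entire content of the lemma; your closing paragraph names this as ``the main technical difficulty'' rather than resolving it, so the core is assumed, not proved (the a posteriori nonnegativity ``cleanup'' is likewise only sketched, with no termination argument). The standard proof avoids all of this by running the reduction in the opposite order: first apply the one-subgroup version of Ma's lemma to write $Y=p^fA+\overline{P_1}B$ (with $A,B$ nonnegative when $Y$ is), then apply the induction hypothesis to $\pi(B)$ in $G/P_1$, and finally lift using the identities $\overline{P_1}\cdot\ker\pi=0$ (so $\overline{P_1}B$ depends only on $\pi(B)$ and any lift of it may be substituted) and $\overline{P_1}\,\overline{S_j}=\overline{P_{j+1}}$ for a transversal $S_j$ of $P_1$ in $P_{j+1}$; this produces the correct exponents in both cases $l\leqslant f$ and $l>f$ and carries nonnegativity through automatically. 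If you want to salvage your write-up, you must either prove the one-subgroup Ma lemma (the cyclotomic valuation argument) and restructure as above, or supply an honest proof of your claim about $R$.
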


\subsection{Generalization of Lemma \ref{l-3.1}}
Throughout this section,\;$U$ is a multisubset of $C_n=\langle x\rangle$ such that $\Delta_{U}(u)\leqslant2$ for any $u\in U$($\Delta_{U}\leqslant2$ for short),\;i.e.,\;each element in $U$ occurs at most twice.\;Define $\nu_p(z)$ as the maximum  power of the prime $p$ that divides $n$.\;We use $\nu(n)$ to mean $\nu_2(n)$ when $p=2$.\;

Let $n=2^{\nu(n)}\prod\limits_{i=1}^sp_i^{\alpha_i}$ be the prime factorization of $n$,\;where $p_1,p_2,\cdots,p_s$ are odd primes.\;Let $P_i^{(p)}=\langle x^{\frac{n}{p^i}}\rangle$ be the cyclic subgroup of $C_n$ of order $p^i$,\;for $i=0,1,\cdots,\nu_p(n)$.\;

Let $c$ be a positive divisor of $n$,\;and $c=2^{\nu(c)}\prod\limits_{i=1}^sp_i^{\beta_i}$ be the prime factorization of $c$,\;where $\beta_i\leqslant\alpha_i$ for $1\leqslant i\leqslant s$.\;We have the following lemmas.
\begin{lem}\label{l-3.2}Let $U$ be a multisubset of $C_n$ such that $\Delta_{U}\leqslant2$.\;If
\[\chi(\overline{U})\equiv 0 \;\mathrm{mod}\;c\]
for all nonprincipal characters $\chi$ of $C_n$.\;Then for each odd prime divisor $p_i$ of $n$,\;there exists a multisubset $E_i$ of $\{0,1,\cdots,{\frac{n}{p_i^{\beta_i}}-1}\}$ such that $\Delta_{E_i}\leqslant2$ and
\begin{equation}
\begin{aligned}
U=x^{E_i}\langle x^{\frac{n}{p_i^{\beta_i}}}\rangle.
\end{aligned}
\end{equation}
\end{lem}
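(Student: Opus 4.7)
The plan is to apply Lemma \ref{l-3.1} (Arasu's exponent lemma) with $G = C_n$, $Y = \overline{U}$, the prime $p = p_i$, and exponent $f = \beta_i$, and then exploit the bound $\Delta_U \leq 2$ together with the oddness of $p_i$ to kill all but the last summand of the resulting decomposition. The case $\beta_i = 0$ is trivial since then $\langle x^{n/p_i^{\beta_i}}\rangle = C_n$ and the multiset $E_i$ that merely records $U$ works, so I would assume $\beta_i \geq 1$ from now on.

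Since $C_n$ is cyclic, its Sylow $p_i$-subgroup is $P_{\alpha_i}^{(p_i)} = \langle x^{n/p_i^{\alpha_i}}\rangle$, of order $p_i^{\alpha_i}$, so the integer $l$ appearing in Lemma \ref{l-3.1} equals $\alpha_i \geq \beta_i = f$. Hence the extra hypothesis on the principal character is not needed, and Lemma \ref{l-3.1} yields
\[\overline{U} \;=\; p_i^{\,f}\,X_0 \;+\; p_i^{\,f-1}\,\overline{P_1^{(p_i)}}\,X_1 \;+\;\cdots\;+\; \overline{P_f^{(p_i)}}\,X_f,\]
with each $X_j \in \mathbb{Z}[C_n]$ having nonnegative integer coefficients.

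Next I would argue that $X_j = 0$ for every $j < f$. For such $j$ one has $f-j \geq 1$ and, because $p_i$ is odd, $p_i^{f-j} \geq 3$. If $X_j$ had a nonzero coefficient $c_g \geq 1$ at some $g \in C_n$, the single summand $p_i^{f-j}\,\overline{P_j^{(p_i)}}\,X_j$ would contribute at least $p_i^{f-j} c_g \geq 3$ to the coefficient in $\overline{U}$ of every element of the coset $g P_j^{(p_i)}$; since all remaining summands have nonnegative coefficients, this would force some coefficient of $\overline{U}$ to exceed $2$, contradicting $\Delta_U \leq 2$. Therefore $\overline{U} = \overline{P_f^{(p_i)}}\,X_f$.

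Finally, writing $X_f = \sum_{k=0}^{n/p_i^{\beta_i}-1} a_k x^k$ using coset representatives of $P_f^{(p_i)} = \langle x^{n/p_i^{\beta_i}}\rangle$, each $a_k$ is exactly the coefficient in $\overline{U}$ of every element of $x^k P_f^{(p_i)}$, so $0 \leq a_k \leq 2$. Setting $\Delta_{E_i}(k) = a_k$ produces a multisubset $E_i$ of $\{0,1,\ldots, n/p_i^{\beta_i}-1\}$ with $\Delta_{E_i} \leq 2$ and $U = x^{E_i}\langle x^{n/p_i^{\beta_i}}\rangle$, as required. The only nontrivial step is the vanishing of the $X_j$ for $j < f$, and this is precisely where oddness of $p_i$ is essential: for $p=2$ the inequality $p^{f-j}\geq 3$ fails at $j=f-1$, which is exactly why the statement is restricted to odd prime divisors of $n$.
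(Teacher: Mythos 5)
Your proof is correct and follows essentially the same route as the paper: apply Lemma \ref{l-3.1} with $f=\beta_i$, use $\Delta_U\leqslant 2<p_i$ to force $X_0=\cdots=X_{\beta_i-1}=0$, and then read off $E_i$ from coset representatives of $P_{\beta_i}^{(p_i)}=\langle x^{n/p_i^{\beta_i}}\rangle$. (One cosmetic slip: for $\beta_i=0$ the subgroup $\langle x^{n/p_i^{\beta_i}}\rangle=\langle x^{n}\rangle$ is the trivial subgroup, not all of $C_n$, though the conclusion you draw for that case is still the correct one.)
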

\begin{proof}Note that
\[\chi(\overline{U})\equiv 0 \;\mathrm{mod}\;c\;\Rightarrow \chi(\overline{U})\equiv 0 \;\mathrm{mod}\;p_i^{\beta_i}\]
 for all nonprincipal characters $\chi$ of $C_n$,\;then from Lemma  \ref{l-3.1} with $m=\min\{\beta_i,\alpha_i\}=\beta_i$,\;there are elements $X_0,X_1,\cdots,X_{\beta_i}$ with non-negative coefficients in $\mathbb{Z}[C_{n}]$ such that
\[\overline{U}=p_i^{\beta_i} X_0+p_i^{{\beta_i}-1}\overline{P_1^{(p_i)}}X_1+\cdots+p_i^{\beta_i-\beta_i}\overline{P_{\beta_i}^{(p_i)}}X_\beta.\]
It is clear that all the coefficients in $p_i^{\beta_i-j}\overline{P_j^{(p_i)}}X_j$ at least $p_i$ provided $X_j\neq 0$,\;for any $0\leqslant j\leqslant \beta_i-1$.\;This gives that  $X_0=X_1=\cdots=X_{\beta_i-1}=0$ since all the coefficients in $\overline{U}$ don't exceed $2<p_i$.\;Hence \[\overline{U}=X_{\beta_i}\overline{P_{\beta_i}^{(p_i)}},\]
this claims that the multiset $U$ is a union of some cosets of $P_{\beta_i}^{(p_i)}$ in $C_n$ and hence $X_{\beta_i}$ can be chosen as a multisubset of $\{e=x^0,x^1,\cdots,x^{\frac{n}{p_i^{\beta_i}}-1}\}$.\;Let $X_{\beta_i}=\overline{x^{E_i}}$ for some multisubset $E_i$ of $\{0,1,\cdots,{\frac{n}{p_i^{\beta_i}}-1}\}$,\;then $\Delta_{E_i}\leqslant2$ holds clearly and
\[{U}=x^{E_i}{P_{\beta_i}^{(p_i)}}=x^{E_i}\langle x^{\frac{n}{p_i^{\beta_i}}}\rangle.\]
This  completes the proof.\;
\end{proof}

\begin{lem}\label{l-3.3}Let $c$ be an even number and $U$ be a multisubset of $C_n$ such that $\Delta_{U}\leqslant2$.\;If
\[\chi(\overline{U})\equiv 0 \;\mathrm{mod}\;c\]
for all nonprincipal characters $\chi$ of $C_n$.\;Then there exists a multisubset $E_0$ of $\{0,1,\cdots,{\frac{2n}{2^{\nu(c)}}-1}\}$ such that
\begin{equation}
\begin{aligned}
U=x^{E_0}\langle x^{\frac{2n}{2^{\nu(c)}}}\rangle.
\end{aligned}
\end{equation}
\end{lem}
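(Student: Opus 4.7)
The plan is to mirror the proof of Lemma \ref{l-3.2}, but now taking $p = 2$, and to account for the fact that the multiplicity bound $\Delta_U\leqslant 2$ is exactly equal to the prime rather than strictly less than it, so the ``small coefficients force $X_j=0$'' argument leaves two surviving summands rather than one. Writing $\nu=\nu(c)$ and using that $c\mid n$ gives $\nu\leqslant\nu(n)$, I would apply Lemma \ref{l-3.1} with $p=2$, $f=\nu$, $l=\nu(n)$, and $m=\min\{l,f\}=\nu$ (so the extra hypothesis on $\chi_0$ is not needed) to obtain a decomposition
\[
\overline{U}=\sum_{j=0}^{\nu}2^{\nu-j}\,\overline{P_j^{(2)}}\,X_j
\]
with each $X_j\in\mathbb{Z}[C_n]$ having non-negative integer coefficients.

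For every $j\leqslant \nu-2$, each non-zero coefficient of $2^{\nu-j}\overline{P_j^{(2)}}X_j$ is at least $2^{\nu-j}\geqslant 4$; since $\Delta_U\leqslant 2$, this forces $X_j=0$ for all such $j$. Only the two top summands survive, so
\[
\overline{U}=2\,\overline{P_{\nu-1}^{(2)}}\,X_{\nu-1}+\overline{P_\nu^{(2)}}\,X_\nu.
\]
The key merging step is to use that $P_{\nu-1}^{(2)}$ has index $2$ in $P_\nu^{(2)}$ with coset representative $y=x^{n/2^\nu}$, so $\overline{P_\nu^{(2)}}=(1+y)\,\overline{P_{\nu-1}^{(2)}}$. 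Factoring gives
\[
\overline{U}=\overline{P_{\nu-1}^{(2)}}\bigl(2X_{\nu-1}+(1+y)X_\nu\bigr)=\overline{P_{\nu-1}^{(2)}}\,W,
\]
where $W$ has non-negative integer coefficients. Note that $P_{\nu-1}^{(2)}=\langle x^{n/2^{\nu-1}}\rangle=\langle x^{2n/2^{\nu(c)}}\rangle$, which is precisely the subgroup appearing in the claim; the extra factor of $2$ compared with Lemma \ref{l-3.2} is exactly the slack needed to absorb the term $\overline{P_\nu^{(2)}}X_\nu$ that could not be killed.

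Finally, the identity $\overline{U}=\overline{P_{\nu-1}^{(2)}}W$ expresses $U$ as a disjoint union of cosets of $P_{\nu-1}^{(2)}$ with some non-negative integer multiplicities: choosing representatives in $\{e,x,\ldots,x^{2n/2^{\nu(c)}-1}\}$ and letting $b_j$ be the common coefficient of all elements of $x^jP_{\nu-1}^{(2)}$ in $\overline{U}$, one has $\overline{U}=\sum_j b_j\,\overline{x^jP_{\nu-1}^{(2)}}$. Because all elements in a fixed coset share this single coefficient $b_j$ and $\Delta_U\leqslant 2$, necessarily $b_j\in\{0,1,2\}$; defining $E_0$ as the multisubset of $\{0,1,\ldots,2n/2^{\nu(c)}-1\}$ with $\Delta_{E_0}(j)=b_j$ then yields $U=x^{E_0}\langle x^{2n/2^{\nu(c)}}\rangle$, as required. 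The only delicate point is the $p=2$ merging step; once the $(1+y)$-trick is in place, everything else follows the odd-prime template.
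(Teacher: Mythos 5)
Your proposal is correct and follows essentially the same route as the paper: apply Lemma \ref{l-3.1} with $p=2$ and $f=\nu(c)$, kill the summands with $j\leqslant\nu(c)-2$ using $\Delta_U\leqslant 2<4$, and then merge the two surviving terms by writing $\overline{P_{\nu(c)}^{(2)}}=\bigl(1+x^{n/2^{\nu(c)}}\bigr)\overline{P_{\nu(c)-1}^{(2)}}$, which is exactly the paper's factorization $\overline{U}=\bigl(2X_{\nu(c)-1}+X_{\nu(c)}+x^{n/2^{\nu(c)}}X_{\nu(c)}\bigr)\overline{\langle x^{2n/2^{\nu(c)}}\rangle}$. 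The only difference is cosmetic: you spell out why the coefficients are constant on cosets of $P_{\nu(c)-1}^{(2)}$, which the paper leaves implicit.
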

\begin{proof}Note that
\[\chi(\overline{U})\equiv 0 \;\mathrm{mod}\;c\Rightarrow \chi(\overline{U})\equiv 0 \;\mathrm{mod}\;2^{\nu(c)}\]
 for all nonprincipal characters $\chi$ of $C_n$,\;then from Lemma  \ref{l-3.1} with $m=\min\{{\nu(c)},{\nu(n)}\}={\nu(c)}$,\;there are elements $X_0,X_1,\cdots,X_{\beta_i}$ with non-negative coefficients in $\mathbb{Z}[C_{n}]$ such that
\[\overline{U}=2^{\nu(c)} X_0+2^{\nu(c)-1}\overline{P_1^{(2)}}X_1+\cdots+2^{\nu(c)-{\nu(c)}}\overline{P_{\nu(c)}^{(2)}}X_{\nu(c)}.\]
It is clear that all the coefficients in $2^{\nu(c)-j}\overline{P_j^{(2)}}X_j$ at least $4$ provided $X_j\neq 0$,\;for any $0\leqslant j\leqslant \nu(c)-2$.\;This gives that  $X_0=X_1=\cdots=X_{\nu(c)-2}=0$ since all the coefficients in $\overline{U}$ don't exceed $2$.\;Hence
\begin{equation*}
\begin{aligned}
\overline{U}&=2\overline{P_{\nu(c)-1}^{(2)}}X_{\nu(c)-1}+\overline{P_{\nu(c)}^{(2)}}X_{\nu(c)}\\
&=2\overline{\langle x^{\frac{2n}{2^{\nu(c)}}}\rangle}X_{\nu(c)-1}+\overline{\langle x^{\frac{n}{2^{\nu(c)}}}\rangle}X_{\nu(c)}\\
&=2\overline{\langle x^{\frac{2n}{2^{\nu(c)}}}\rangle}X_{\nu(c)-1}+\left(\overline{\langle x^{\frac{2n}{2^{\nu(c)}}}\rangle}+x^{\frac{n}{2^{\nu(c)}}}\overline{\langle x^{\frac{2n}{2^{\nu(c)}}}\rangle}\right)X_{\nu(c)}\\
&=\left(2X_{\nu(c)-1}+X_{\nu(c)}+x^{\frac{n}{2^{\nu(c)}}}X_{\nu(c)}\right)\overline{\langle x^{\frac{2n}{2^{\nu(c)}}}\rangle}\\
&\overset{\text{def}}=\overline{x^{E_0}}\;\overline{\langle x^{\frac{2n}{2^{\nu(c)}}}\rangle},
\end{aligned}
\end{equation*}
where $E_0$ is a multisubset of $\{0,1,\cdots,{\frac{2n}{2^{\nu(c)}}-1}\}$.\;Therefore
\begin{equation*}
\begin{aligned}
U=x^{E_0}\langle x^{\frac{2n}{2^{\nu(c)}}}\rangle.
\end{aligned}
\end{equation*}
The result follows.
\end{proof}
Let $\omega_o(c)$ denote the number of distinct odd prime divisors of $c$,\;i.e.
\[\omega_o(c)=\#\{p:p\in \mathbb{P},p>2,p|c\},\]
where $ \mathbb{P}$ is the set of all the prime numbers.\;The following lemmas give a more detailed structure of $U$ if
\[\chi(\overline{U})\equiv 0 \;\mathrm{mod}\;c\]
for all nonprincipal characters $\chi$ of $C_n$.\;

\begin{lem}\label{l-3.4}Let $c$ be an odd number and $U$ be a multisubset of $C_n$ with $\Delta_{U}\leqslant2$.\;If
\[\chi(\overline{U})\equiv 0 \;\mathrm{mod}\;c\]
for all nonprincipal characters $\chi$ of $C_n$.\;Then there exists a multisubset $E$ of $\{0,1,\cdots,{\frac{n}{c}-1}\}$ such that the multiplicity function $\Delta_{E}\leqslant2$ and
\begin{equation}
\begin{aligned}
U=x^{E}\langle x^{\frac{n}{c}}\rangle.
\end{aligned}
\end{equation}
\end{lem}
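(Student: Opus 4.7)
My plan is to derive Lemma \ref{l-3.4} from Lemma \ref{l-3.2} by applying the latter to each odd prime divisor of $c$ separately and then combining the resulting invariances through the subgroup structure of $C_n$.

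First, fix any prime $p_i\mid c$ and let $p_i^{\beta_i}$ denote the exact power of $p_i$ dividing $c$. Since $p_i^{\beta_i}\mid c$, the hypothesis $\chi(\overline{U})\equiv 0\;\mathrm{mod}\;c$ for all nonprincipal $\chi\in\widehat{C_n}$ immediately yields $\chi(\overline{U})\equiv 0\;\mathrm{mod}\;p_i^{\beta_i}$ for the same characters. Lemma \ref{l-3.2} then produces a multisubset $E_i\subseteq\{0,1,\ldots,n/p_i^{\beta_i}-1\}$ with $\Delta_{E_i}\leqslant 2$ satisfying
$$U=x^{E_i}\langle x^{n/p_i^{\beta_i}}\rangle.$$
Writing $H_i=\langle x^{n/p_i^{\beta_i}}\rangle$, this equality is equivalent to saying that $U$, as a multiset, is invariant under left multiplication by every element of $H_i$, i.e.\ $\Delta_U(hg)=\Delta_U(g)$ for all $h\in H_i$ and $g\in C_n$.

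Next, I combine these invariances. Running the argument for every $p_i\mid c$, $1\leqslant i\leqslant s$, shows that $U$ is invariant under the subgroup $H\leqslant C_n$ generated by $H_1,\ldots,H_s$. The orders $|H_i|=p_i^{\beta_i}$ are pairwise coprime, and $C_n$ is cyclic, so $|H|=\prod_{i=1}^{s}p_i^{\beta_i}=c$ and thus $H=\langle x^{n/c}\rangle$. Consequently, in each coset $g\langle x^{n/c}\rangle$ all elements share a common multiplicity $m_g$ in $U$, and $m_g\leqslant 2$ because $\Delta_U\leqslant 2$. Choosing coset representatives $\{e,x,x^2,\ldots,x^{n/c-1}\}$ and defining $E$ to be the multisubset of $\{0,1,\ldots,n/c-1\}$ with $\Delta_E(j)=m_{x^j}$ will then give $U=x^{E}\langle x^{n/c}\rangle$ with $\Delta_E\leqslant 2$, as required.

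The substantive input is Lemma \ref{l-3.2}, which has already been established; the two remaining ingredients — identifying the subgroup generated by the $H_i$ as $\langle x^{n/c}\rangle$ via order-counting in a cyclic group with pairwise coprime orders, and converting $H$-invariance of the multiset $U$ into the explicit coset decomposition — are routine. I therefore do not anticipate any genuine obstacle; the only mild care needed is to phrase everything at the level of multisets (multiplicity functions) rather than ordinary sets.
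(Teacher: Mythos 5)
Your proposal is correct, but it takes a genuinely different route from the paper. The paper proves Lemma \ref{l-3.4} by induction on $\omega_o(c)$: writing $c=dp^s$ with $p\nmid d$, it first obtains $U=T'\langle x^{\frac{n}{d}}\rangle$ from the induction hypothesis, then passes to the quotient $\langle x\rangle/\langle x^{\frac{n}{d}}\rangle$ via the character correspondence of Lemma \ref{l-2.8}, verifies that $\chi'(\pi_{\frac{n}{d}}(T'))\equiv 0 \bmod p^s$ for all nonprincipal characters of the quotient, applies Lemma \ref{l-3.2} there, and lifts the resulting coset decomposition back to $C_n$. You instead apply Lemma \ref{l-3.2} once for each prime $p_i\mid c$ directly in $C_n$, reinterpret each conclusion $U=x^{E_i}\langle x^{n/p_i^{\beta_i}}\rangle$ as invariance of $\Delta_U$ under translation by $H_i=\langle x^{n/p_i^{\beta_i}}\rangle$, and observe that in a cyclic group the subgroups $H_i$ of pairwise coprime orders generate the subgroup of order $\prod_i p_i^{\beta_i}=c$, namely $\langle x^{n/c}\rangle$; constancy of $\Delta_U$ on cosets of this subgroup is exactly the claimed decomposition, with $\Delta_E\leqslant 2$ inherited from $\Delta_U\leqslant 2$. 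All the steps check out: invariance under each $H_i$ composes to invariance under the generated subgroup since $C_n$ is abelian, and the order count uses only that $c$ is a (squarefree-or-not) odd divisor of $n$, which is the standing assumption of the subsection. Your argument buys a shorter, induction-free proof that never leaves the ambient group and avoids the quotient/lifting bookkeeping; the paper's quotient-group technique is heavier here but is reused essentially verbatim in Lemma \ref{l-3.5} to splice in the even part $2^{\nu(c)}$ via Lemma \ref{l-3.3}, so the authors get to recycle the same template where your coprimality argument would need a separate treatment of the power of $2$.
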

\begin{proof}The proof proceeds by induction on $\omega_o(c)$.\;This assertion for $\omega_o(c)=1$ has been proved in Lemma \ref{l-3.2}.\;Now let $c=dp^s$ with odd prime $p\nmid d$,\;then $\omega_o(c)=\omega_o(d)+1$.\;Based on the assumption of this lemma,\;we can get
\[\chi(\overline{U})\equiv 0 \;\mathrm{mod}\;d\]
for all nonprincipal characters $\chi$ of $C_n$.\;Then by applying the induction hypothesis,\;there is a multisubset $E'$ of $\{0,1,\cdots,\frac{n}{d}-1\}$ such that the multiplicity function $\Delta_{E'}\leqslant2$ and
\begin{equation*}
\begin{aligned}
U=x^{E'}\langle x^{\frac{n}{d}}\rangle\overset{\text{def}}={T'}\langle x^{\frac{n}{d}}\rangle.
\end{aligned}
\end{equation*}
Let $\pi_{\frac{n}{d}}$ be the natural projection from $C_n=\langle x\rangle$ to the quotient group $\langle x\rangle/\langle x^{\frac{n}{d}}\rangle=\langle \pi_{\frac{n}{d}}(x)\rangle$,\;this is a cyclic group of order $\frac{n}{d}$.\;Let
$\widetilde{x}=\pi_{\frac{n}{d}}(x)$,\;then $$\pi_{\frac{n}{d}}(U)=\pi_{\frac{n}{d}}({T'}\langle x^{\frac{n}{d}}\rangle)=d\oplus \pi_{\frac{n}{d}}(T').$$
From Lemma \ref{l-2.8},\;there is a bijection between
\[\widehat{\langle x\rangle/\langle x^{\frac{n}{d}}\rangle}\leftrightarrow\{\chi\in \widehat{C_n}|\langle x^{\frac{n}{d}}\rangle\subseteq\ker\chi\}=\{\chi_{dj}|0\leqslant j\leqslant{n}/{d}-1\}.\]
Recall that $\widehat{\langle x\rangle/\langle x^{\frac{n}{d}}\rangle}=\{\chi'_j|0\leqslant j\leqslant\frac{n}{d}-1\}$,\;where $\chi'_j$ is the character of quoient group $\langle x\rangle/\langle x^{\frac{n}{d}}\rangle$ induced by the character $\chi_{dj}\in \widehat{C_n}$,\;
for any $0\leqslant j\leqslant\frac{n}{d}-1$.\;Therefore,\;for all $1\leqslant j\leqslant\frac{n}{d}-1$,\;
\begin{equation}\label{3.4}
\begin{aligned}
\chi_j'(d\oplus \pi_{\frac{n}{d}}(T'))=d\chi_j'(\pi_{\frac{n}{d}}(T'))=d\chi_{dj}(T')=\chi_{dj}({T'}\langle x^{\frac{n}{d}}\rangle)=\chi_{dj}(\overline{U})\equiv 0 \;\mathrm{mod}\;c.
\end{aligned}
\end{equation}
This gives that
\[\chi_j'(\pi_{\frac{n}{d}}(T'))\equiv 0 \;\mathrm{mod}\;p^s\]
for any $1\leqslant j\leqslant\frac{n}{d}-1$.\;Then from Lemma  \ref{l-3.2},\;there is multisubset $E$ of $\{0,1,\cdots,{\frac{n}{dp^s}-1}\}=\{0,1,\cdots,{\frac{n}{c}-1}\}$ such that the multiplicity function $\Delta_{E}\leqslant2$ and
\begin{equation*}
\begin{aligned}
\pi_{\frac{n}{d}}(T')=\widetilde{x}^{E}\langle \widetilde{x}^{\frac{n}{dp^s}}\rangle=\{\widetilde{x}^{a+\frac{n}{dp^s}j}:a\in E,0\leqslant j\leqslant p^s-1\}=\bigcup_{a\in E}\bigcup_{j=0}^{p^s-1}\{\pi_{\frac{n}{d}}(x^{a+\frac{n}{dp^s}j})\}.
\end{aligned}
\end{equation*}
Since $\ker\pi_{\frac{n}{d}}=\langle x^{\frac{n}{d}}\rangle$,\;the above equation gives that
\begin{equation*}
\begin{aligned}
T'=\bigcup_{a\in E}\bigcup_{j=0}^{p^s-1}\{x^{a+\frac{n}{dp^s}j}b_{a,j}\}
\end{aligned}
\end{equation*}
for some $b_{a,j}\in\langle x^{\frac{n}{d}}\rangle$,\;where $a\in E$.\;Then we conclude that
\begin{equation*}
\begin{aligned}
U={T'}\langle x^{\frac{n}{d}}\rangle=\bigcup_{a\in E}\bigcup_{j=0}^{p^s-1}\{x^{a+\frac{n}{dp^s}j}b_{a,j}\langle x^{\frac{n}{d}}\rangle\}
=x^E\left(\bigcup_{j=0}^{p^s-1}x^{\frac{n}{dp^s}j}\langle x^{\frac{n}{d}}\rangle\right)=x^E\langle x^{\frac{n}{c}}\rangle.
\end{aligned}
\end{equation*}
\end{proof}
\begin{lem}\label{l-3.5}Let $c=2^{\nu(c)}c_1$ be an even number and $U$ be a multisubset of $C_n$ with $\Delta_{U}\leqslant2$.\;If
\[\chi(\overline{U})\equiv 0 \;\mathrm{mod}\;c\]
for all nonprincipal characters $\chi$ of $C_n$.\;Then there is a multisubset $E$ of $\{0,1,\cdots,{\frac{2n}{c}-1}\}$ such that the multiplicity function $\Delta_{E}\leqslant2$ and
\begin{equation}
\begin{aligned}
U=x^{E}\langle x^{\frac{2n}{c}}\rangle,
\end{aligned}
\end{equation}
\end{lem}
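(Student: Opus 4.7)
The plan is to combine the strategies of Lemma \ref{l-3.3} and Lemma \ref{l-3.4} by an odd-part reduction, followed by a projection, an application of Lemma \ref{l-3.3} on the quotient, and a lift-back. Write $c=2^{\nu(c)}c_1$ with $c_1$ odd. Since $c_1\mid c$, the hypothesis immediately gives $\chi(\overline{U})\equiv0\pmod{c_1}$ for every nonprincipal character $\chi$ of $C_n$, so Lemma \ref{l-3.4} furnishes a multisubset $E'$ of $\{0,1,\ldots,n/c_1-1\}$ with $\Delta_{E'}\leqslant 2$ and
\[
U=T'\langle x^{n/c_1}\rangle,\qquad T':=x^{E'}.
\]

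Next, I would push the divisibility condition down to the quotient $\langle x\rangle/\langle x^{n/c_1}\rangle$, which is cyclic of order $n/c_1$ generated by $\widetilde{x}:=\pi_{n/c_1}(x)$. By Lemma \ref{l-2.8} the nonprincipal characters of this quotient correspond to the characters $\chi_{c_1j}$ of $C_n$ for $1\leqslant j\leqslant n/c_1-1$. Using $\chi_{c_1j}(\overline{\langle x^{n/c_1}\rangle})=c_1$, one obtains
\[
c_1\,\chi_{c_1j}(\overline{T'})=\chi_{c_1j}(\overline{U})\equiv0\pmod{c=c_1\cdot 2^{\nu(c)}}.
\]
Cancelling $c_1$ in the integral domain $\mathbb{Z}[\zeta_n]$ (which is the heart of this step, using $\gcd(c_1,2^{\nu(c)})=1$) gives $\chi_{c_1j}(\overline{T'})\equiv0\pmod{2^{\nu(c)}}$, which translates to
\[
\chi'_j(\overline{\pi_{n/c_1}(T')})\equiv0\pmod{2^{\nu(c)}},\qquad 1\leqslant j\leqslant n/c_1-1.
\]
Because $T'$ sits inside a transversal of $\langle x^{n/c_1}\rangle$, the projection is injective on its support and $\Delta_{\pi_{n/c_1}(T')}\leqslant 2$.

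Now apply Lemma \ref{l-3.3} inside $\langle\widetilde{x}\rangle$ with even modulus $2^{\nu(c)}$: this yields a multisubset $E$ of $\{0,1,\ldots,2(n/c_1)/2^{\nu(c)}-1\}=\{0,1,\ldots,2n/c-1\}$ with
\[
\pi_{n/c_1}(T')=\widetilde{x}^{E}\langle\widetilde{x}^{2n/c}\rangle.
\]
Because $E$ is chosen inside one complete system of coset representatives of $\langle\widetilde{x}^{2n/c}\rangle$ in $\langle\widetilde{x}\rangle$, and the left-hand side has multiplicities at most $2$, any coset is hit at most twice and hence $\Delta_{E}\leqslant 2$.

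Finally, I would lift back. Since $\nu(c)\geqslant 1$ we have $2n/c\mid n/c_1$, so $\langle x^{n/c_1}\rangle\subseteq\langle x^{2n/c}\rangle$. Writing each element of $T'$ in the form $x^{a+(2n/c)j}b_{a,j}$ with $a\in E$, $0\leqslant j\leqslant 2^{\nu(c)-1}-1$ and $b_{a,j}\in\langle x^{n/c_1}\rangle$, exactly as at the end of the proof of Lemma \ref{l-3.4}, yields
\[
U=T'\langle x^{n/c_1}\rangle=x^{E}\Bigl(\bigcup_{j=0}^{2^{\nu(c)-1}-1}x^{(2n/c)j}\langle x^{n/c_1}\rangle\Bigr)=x^{E}\langle x^{2n/c}\rangle,
\]
as required. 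The main obstacle is the passage to the quotient: one must carefully use the coprimality of $c_1$ and $2^{\nu(c)}$ to convert the mod-$c$ divisibility on $C_n$ into the clean mod-$2^{\nu(c)}$ divisibility on the quotient, which is what makes Lemma \ref{l-3.3} applicable; everything afterwards is essentially coset bookkeeping parallel to that at the end of Lemma \ref{l-3.4}.
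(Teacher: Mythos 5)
Your proposal is correct and follows essentially the same route as the paper: reduce modulo the odd part $c_1$ via Lemma \ref{l-3.4}, project to the quotient $\langle x\rangle/\langle x^{n/c_1}\rangle$, apply Lemma \ref{l-3.3} there with modulus $2^{\nu(c)}$, and lift the cosets back. The only differences are cosmetic — you spell out the divisibility transfer to the quotient (which the paper dispatches with ``similar to the calculation of (\ref{3.4})'') and you explicitly justify $\Delta_E\leqslant 2$.
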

\begin{proof}Observe that
\[\chi(\overline{U})\equiv 0 \;\mathrm{mod}\;c\Rightarrow\chi(\overline{U})\equiv 0 \;\mathrm{mod}\;c_1\]
for all nonprincipal characters $\chi$ of $C_n$.\;Then by Lemma \ref{l-3.4},\;there is multisubset $E'$ of $\{0,1,\cdots,{\frac{n}{c_1}-1}\}$ such that the multiplicity function $\Delta_{E'}\leqslant2$ and
\begin{equation}
\begin{aligned}
U=x^{E'}\langle x^{\frac{n}{c_1}}\rangle\overset{\text{def}}={T'}\langle x^{\frac{n}{c_1}}\rangle.
\end{aligned}
\end{equation}
Let $r=2^{\nu(c)}$ and $\pi_{\frac{n}{c_1}}$ be the natural projection from $C_n=\langle x\rangle$ to the quotient group $\langle x\rangle/\langle x^{\frac{n}{c_1}}\rangle=\langle\pi_{\frac{n}{c_1}}(x)\rangle$ which is a cyclic group of order $\frac{n}{c_1}$.\;Let
$\widetilde{x}=\pi_{\frac{n}{c_1}}(x)$,\;then
$$\pi_{\frac{n}{c_1}}(U)=\pi_{\frac{n}{c_1}}({T'}\langle x^{\frac{n}{c_1}}\rangle)=c_1\oplus \pi_{\frac{n}{c_1}}(T').$$
Similar to the calculation of (\ref{3.4}),\;we also have
\[\chi'(\pi_{\frac{n}{c_1}}(T'))\equiv 0 \;\mathrm{mod}\;r\]
for any characters $\chi'$  of quotient group $\langle x\rangle/\langle x^{\frac{n}{c_1}}\rangle$.\;Then the Lemma \ref{l-3.3} implies that,\;there is a multisubset $E$ of $\{0,1,\cdots,{\frac{2n}{c_1r}-1}\}$ such that the multiplicity function $\Delta_{E}\leqslant2$ and
\begin{equation*}
\begin{aligned}
\pi_{\frac{n}{c_1}}(T')=\widetilde{x}^{E}\langle \widetilde{x}^{\frac{2n}{c_1r}}\rangle=\bigcup_{a\in E}\bigcup_{j=0}^{\frac{r}{2}-1}\{\pi_{\frac{n}{c_1}}(x^{a+\frac{2n}{c_1r}j})\}.
\end{aligned}
\end{equation*}
This shows that
\begin{equation*}
\begin{aligned}
T'=\bigcup_{a\in E}\bigcup_{j=0}^{\frac{r}{2}-1}\{x^{a+\frac{2n}{c_1r}j}b_{a,j}\}
\end{aligned}
\end{equation*}
for some $b_{a,j}\in\langle x^{\frac{n}{c_1}}\rangle$,\;where $a\in E$.\;Then we  conclude that
\begin{equation*}
\begin{aligned}
U={T'}\langle x^{\frac{n}{c_1}}\rangle=\bigcup_{a\in E}\bigcup_{j=0}^{\frac{r}{2}-1}\{x^{a+\frac{2n}{c_1r}j}b_{a,j}\langle x^{\frac{n}{c_1}}\rangle\}
=x^E\left(\bigcup_{j=0}^{\frac{r}{2}-1}x^{\frac{2n}{c_1r}j}\langle x^{\frac{n}{c_1}}\rangle\right)=x^E\langle x^{\frac{2n}{c}}\rangle.
\end{aligned}
\end{equation*}
\end{proof}

\subsection{The main lemmas occur in the proof of the main Theorem \ref{t-1.4}}
Throughout this section,\;let $c$ be an integer,\;not necessary be positive.\;We now assume $U$ is an unempty multisubset of $C_n$ such that $e\not\in U$ and $\Delta_U\leqslant 2$,\;then there is a multisubset $E$ of $\mathbb{Z}_n\setminus\{0\}$ such that $U=x^E$,\;and in addition,\;suppose $U$ satisfy $$\chi(\overline{U})\in\{0,c\},\forall\;\chi\neq\chi_0,\;i.e.\;(\mathcal{F}\Delta_E)(z)\in\{0,c\},\forall \;0\neq z\in\mathbb{Z}_n.$$\;
We define $\Gamma_c=\{z:z\in \mathbb{Z}_n,\;z\neq0,\;\chi_z(\overline{U})=(\mathcal{F}\Delta_E)(z)=c\}$.\;Let $\underset{i\in\Gamma_{c}}{\mathbf{gcd}}\left\{i\right\}$ denote the greatest common divisor of elements in $\Gamma_{c}$.\;
We define
\begin{equation}\label{3.7}
\begin{aligned}
\delta_c=\mathbf{gcd}(n,\underset{i\in\Gamma_{c}}{\mathbf{gcd}}\left\{i\right\})\text{\;and\;}S_c=\{v:v|n,v\nmid \delta_c\},
\end{aligned}
\end{equation}
where the elements of $\Gamma_c$ are viewed as the integers from the set $\{1,2,\cdots,n-1\}$.\;Then $1\leqslant \delta_c\leqslant n-1 $.\;We have the following lemma.\;

\begin{lem}\label{l-3.6}Let $U=x^E$ be a multisubset of $C_n$ such that $e\not\in U$,\;$\Delta_U\leqslant 2$ and \[\chi(\overline{U})\in\{0,c\}\]
for all nonprincipal characters $\chi$,\;then:\\
$(1)$\;There are some intergers $1<r_1<r_2<\cdots<r_s$ and $1<r_{s+1}<r_{s+2}<\cdots<r_{t}$ satisfy
\begin{equation}\label{3.8}
E=(2\oplus \mathcal{O}_{r_1})\uplus (2\oplus \mathcal{O}_{r_2})\uplus\cdots\uplus (2\oplus \mathcal{O}_{r_s})\uplus (\mathcal{O}_{r_{s+1}}\cup \mathcal{O}_{r_{s+2}}\cup\cdots\cup \mathcal{O}_{r_t}),
\end{equation}
where $r_1,r_2,\cdots,r_t$ are divisors of $n$.\;Therefore,\;$E=-E$,\;i.e.,\;$\Delta_E(z)=\Delta_E(-z)$ for each $z\in E$.\\
$(2)$\:$\mathbb{Z}_{n}\setminus E=\frac{n}{\delta_c}\mathbb{Z}_{n}$.\\
$(3)$\;$|c|$ is a divisor of $n$.\\
$(4)$\;Let $\mathcal{I}_1=\{r_1,r_2,\cdots,r_s\},\;\mathcal{I}_2=\{r_{s+1},r_{s+2},\cdots,r_{t}\}$ throughout this section,\;then $\mathcal{I}_1$ and $\mathcal{I}_2$ form a partition of $S_c$.\;Therefore
\begin{equation}\label{3.9}
\begin{aligned}
E&=\left(\bigcup_{j=1}^s\mathcal{O}_{r_j}\right)\uplus\left(\bigcup_{j\in S_c}\mathcal{O}_{j}\right)
\end{aligned}
\end{equation}
and
\begin{equation}\label{3.10}
\begin{aligned}
(\mathcal{F}\Delta_{E})(z)=\sum_{j=1}^s\mu\left(\frac{r_i}{(r_i,z)}\right)\frac{\varphi(r_i)}{\varphi\left(\frac{r_i}{(r_i,z)}\right)}
+n\Delta_{0}(z)-\delta_c\Delta_{\delta_c\mathbb{Z}_{n}}(z).
\end{aligned}
\end{equation}
\end{lem}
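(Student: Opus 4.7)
The plan is to deduce (1) directly from Lemma~\ref{l-2.10}, and then extract (2), (3), (4) from a single Fourier-inversion computation on $\mathcal{F}\Delta_E$. For (1), the hypothesis $(\mathcal{F}\Delta_E)(z)\in\{0,c\}\subseteq\mathbb{Q}$ for every $z\in\mathbb{Z}_n$ (noting $(\mathcal{F}\Delta_E)(0)=|E|$) puts the image of $\mathcal{F}\Delta_E$ in $\mathbb{Q}$, so Lemma~\ref{l-2.10} forces $\Delta_E=\sum_{v\mid n}\alpha_v\Delta_{\mathcal{O}_v}$ with $\alpha_v\in\mathbb{Q}$. Since the orbits $\mathcal{O}_v$ partition $\mathbb{Z}_n$ and $\Delta_E$ takes values in $\{0,1,2\}$, each $\alpha_v\in\{0,1,2\}$; and $\alpha_1=0$ because $\mathcal{O}_1=\{0\}$ and $0\notin E$. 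Collecting the indices with $\alpha_v=2$ into $\mathcal{I}_1=\{r_1,\dots,r_s\}$ and those with $\alpha_v=1$ into $\mathcal{I}_2=\{r_{s+1},\dots,r_t\}$ yields the decomposition (3.8); and $\mathcal{O}_v=-\mathcal{O}_v$ (since $-1\in\mathbb{Z}_n^{\ast}$) gives $E=-E$.

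For (2) and (4) I apply the inversion formula (2.3). Using $E=-E$ and the hypothesis,
\[n\Delta_E(z)=\mathcal{F}(\mathcal{F}\Delta_E)(z)=|E|+c\,(\mathcal{F}\Delta_{\Gamma_c})(z).\]
Evaluating at $z=0$ and using $\Delta_E(0)=0$ gives $|E|=-c\,|\Gamma_c|$, which forces $c<0$ because $|E|>0$ and $\Gamma_c\neq\emptyset$. Next, a direct substitution shows that $\mathcal{F}\Delta_E$ is constant on each $\mathbb{Z}_n^{\ast}$-orbit (since $E$ is), so $\Gamma_c$ is itself a union of such orbits; the subgroup of $(\mathbb{Z}_n,+)$ it generates is therefore $\delta_c\mathbb{Z}_n$, whose annihilator under the pairing $(w,z)\mapsto\zeta_n^{wz}$ is exactly $\tfrac{n}{\delta_c}\mathbb{Z}_n$. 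Hence for $z\in\tfrac{n}{\delta_c}\mathbb{Z}_n$ every $\zeta_n^{wz}$ with $w\in\Gamma_c$ equals $1$, so $(\mathcal{F}\Delta_{\Gamma_c})(z)=|\Gamma_c|$ and $\Delta_E(z)=0$; while for $z\notin\tfrac{n}{\delta_c}\mathbb{Z}_n$ the real sum $(\mathcal{F}\Delta_{\Gamma_c})(z)$ (real because $\Gamma_c=-\Gamma_c$) is strictly less than $|\Gamma_c|$, and combined with $c<0$ this forces $\Delta_E(z)>0$. This proves (2). Since $\mathcal{O}_v\subseteq\tfrac{n}{\delta_c}\mathbb{Z}_n$ iff $v\mid\delta_c$, this also identifies $\{v:\alpha_v>0\}$ as $S_c$, which is (4).

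For (3), fix any $z$ with $\Delta_E(z)=1$: the displayed identity gives $c\bigl((\mathcal{F}\Delta_{\Gamma_c})(z)-|\Gamma_c|\bigr)=n$, so $c\mid n$ (both factors on the left are integers by Ramanujan's sum (2.7)); the only residual possibility is $\mathcal{I}_2=\emptyset$, where $E=2\oplus E_1$ with $E_1=\bigcup_{v\in\mathcal{I}_1}\mathcal{O}_v$ a genuine set, and one reruns the argument on $\mathcal{F}\Delta_{E_1}\in\{0,c/2\}$. Finally, (3.10) drops out of the decomposition
\[\mathcal{F}\Delta_E=\sum_{v\in S_c}\mathcal{F}\Delta_{\mathcal{O}_v}+\sum_{j=1}^{s}\mathcal{F}\Delta_{\mathcal{O}_{r_j}},\]
combined with $\sum_{v\mid n}\mathcal{F}\Delta_{\mathcal{O}_v}=n\Delta_0$ and $\sum_{v\mid\delta_c}\mathcal{F}\Delta_{\mathcal{O}_v}=\delta_c\Delta_{\delta_c\mathbb{Z}_n}$ (both from (2.6)), and with Ramanujan's sum (2.7) applied to each $\mathcal{F}\Delta_{\mathcal{O}_{r_j}}$ in the $\mathcal{I}_1$ contribution.

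The main obstacle is item (2): pinning down the support of $E$ as the complement of the subgroup $\tfrac{n}{\delta_c}\mathbb{Z}_n$. Two observations close the gap. First, $\delta_c$ is interpreted as the generator of $\langle\Gamma_c\rangle$ inside $(\mathbb{Z}_n,+)$, so the annihilator calculation locates exactly the $z$'s on which $(\mathcal{F}\Delta_{\Gamma_c})(z)$ attains its maximum $|\Gamma_c|$. Second, the sign $c<0$, which is forced at $z=0$, converts the strict inequality at the remaining $z$'s into $\Delta_E(z)>0$. Once these are in place the remaining items (3), (4) and formula (3.10) are short corollaries of the same identity.
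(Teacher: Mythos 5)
Your treatment of parts $(1)$, $(2)$ and $(4)$ is correct and follows essentially the same route as the paper: Lemma~\ref{l-2.10} gives the orbit decomposition of $\Delta_E$ with coefficients in $\{0,1,2\}$ and hence $E=-E$; Fourier inversion gives $n\Delta_E(z)=|E|+c\,(\mathcal{F}\Delta_{\Gamma_c})(z)$, whence $|E|=-c|\Gamma_c|$; and your identification of $\mathbb{Z}_n\setminus E$ with the annihilator $\tfrac{n}{\delta_c}\mathbb{Z}_n$ of $\langle\Gamma_c\rangle$ is the paper's $\gcd$/$\mathrm{lcm}$ computation in different clothing. Your explicit use of $c<0$ to convert the strict inequality $(\mathcal{F}\Delta_{\Gamma_c})(z)<|\Gamma_c|$ into $\Delta_E(z)>0$ is in fact spelled out more carefully than in the paper, which compresses this into the equivalence $\sum_{i\in\Gamma_c}(1-\zeta_n^{-iz})=0\Leftrightarrow\zeta_n^{-iz}=1$ for all $i\in\Gamma_c$.

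The gap is in part $(3)$, exactly in the residual case you flag. When some $z$ has $\Delta_E(z)=1$, your argument does give $c\mid n$. But when $\mathcal{I}_2=\emptyset$, so that $E=2\oplus E_1$ with $E_1$ a genuine set, rerunning the argument on $\mathcal{F}\Delta_{E_1}\in\{0,c/2\}$ only yields $\tfrac{|c|}{2}\mid n$, i.e.\ $|c|\mid 2n$, which is strictly weaker than the assertion $|c|\mid n$. This cannot be repaired, because the assertion is false in that case: take $n=3$ and $E=2\oplus\{1,2\}$, so $U=\{x,x,x^2,x^2\}$; then $e\notin U$, $\Delta_U\leqslant 2$, and $\chi(\overline{U})=-2$ for both nonprincipal characters, yet $|c|=2\nmid 3$. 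To be fair, the paper's own proof of $(3)$ is a one-line assertion read off from $n\Delta_E(z)=c\bigl((\mathcal{F}\Delta_{\Gamma_c})(-z)-|\Gamma_c|\bigr)$, and it fails on the same example (there the second factor is itself divisible by $n$, so no divisibility condition on $c$ can be extracted); your write-up at least isolates the precise case where the claim breaks down. The statement that your argument actually proves is: $|c|\mid n$ whenever $\mathcal{I}_2\neq\emptyset$, and $\tfrac{|c|}{2}\mid n$ otherwise; as written, part $(3)$ of the lemma needs this extra hypothesis or the weaker conclusion.
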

\begin{proof}Note that $$\mathrm{Im}(\mathcal{F}\Delta_E)\in \mathbb{Q}.$$\;Therefore,\;
\[\Delta_{E}=\sum_{r|n}\alpha_r\Delta_{\mathcal{O}_r}\]
for some $\alpha_r\in\{0,1,2\}$ by Lemma \ref{l-2.10}.\;Note that $0\not\in E$,\;so $\alpha_0=0$.\;Then,\;there are some intergers $1\leqslant r_1<r_2<\cdots<r_s\leqslant \alpha$ and $1\leqslant r_{s+1}<r_{s+2}<\cdots<r_{t}\leqslant \alpha$ satisfy
\begin{equation*}
E=(2\oplus \mathcal{O}_{r_1})\uplus (2\oplus \mathcal{O}_{r_2})\uplus\cdots\uplus (2\oplus \mathcal{O}_{r_s})\uplus (\mathcal{O}_{r_{s+1}}\cup \mathcal{O}_{r_{s+2}}\cup\cdots\cup \mathcal{O}_{r_t}).
\end{equation*}
proving $(1)$.\;Secondly,\;by the inversion formula (\ref{2.3}) of Fourier transformation,\;we have
\begin{equation}\label{3.11}
\begin{aligned}
n\Delta_E(z)&=(\mathcal{F}(\mathcal{F}\Delta_E))(-z)=c\sum_{i\in \Gamma_{c}}\omega^{-iz}+|E|=c(\mathcal{F}\Delta_{\Gamma_c})(-z)+|E|,\\
\end{aligned}
\end{equation}
this gives that $\mathbf{Im}(\mathcal{F}(\Delta_{\Gamma_c}))\in\mathbb{Q}$ and hence $\mathbf{Im}(\mathcal{F}(\Delta_{\Gamma_c}))\in\mathbb{Z}$ from Lemma \ref{l-2.10}.\;Note that $\Delta_E(0)=c|\Gamma_{c}|+|E|=0$,\;so $|E|=-c|\Gamma_{c}|$.\;Then (\ref{3.11}) becomes
\begin{equation}\label{3.12}
\begin{aligned}
n\Delta_E(z)=c((\mathcal{F}(\Delta_{\Gamma_c}))(-z)-|\Gamma_{c}|),\\
\end{aligned}
\end{equation}
this asserts that $|c|$ is a divisor of $n$,\;proving $(3)$.\;Meanwhile,\;from (\ref{3.11}) and (\ref{3.12}),\;we can get
\begin{equation*}
\begin{aligned}
z\in\mathbb{Z}_{n}\setminus E &\Leftrightarrow  \Delta_E(z)=0
\Leftrightarrow \sum_{i\in \Gamma_{c}}(1-\omega^{-iz})=0 \Leftrightarrow \omega^{-iz}=1,\forall i\in\Gamma_{c}\Leftrightarrow z\in \bigcap_{i\in\Gamma_{c}}\left(\frac{n}{(n,i)}\mathbb{Z}_{n}\right).
\end{aligned}
\end{equation*}
Therefore
\[\mathbb{Z}_{n}\setminus E=\bigcap_{i\in\Gamma_{c}}\left(\frac{n}{(n,i)}\mathbb{Z}_{n}\right)=\left(\underset{i\in\Gamma_{c}}{\mathbf{lcm}}\left\{\frac{n}{(n,i)}\right\}\right)\mathbb{Z}_{n}=
\left(\frac{n}{(n,\underset{i\in\Gamma_{c}}{\mathbf{gcd}}\left\{i\right\})}\right)\mathbb{Z}_{n}=\frac{n}{\delta_c}\mathbb{Z}_{n},\]
proving $(2)$.\;Part $(4)$ follows from $(1)$,\;$(2)$ and $(3)$ directly.\;Indeed,\;note that
\[\frac{n}{\delta_c}\mathbb{Z}_{n}=\bigcup_{h|\delta_c}\mathcal{O}_h,\]
hence $\mathcal{I}_1$ and $\mathcal{I}_2$ form a partition of $S_c$.\;Therefore
\begin{equation*}
\begin{aligned}
E&=\left(\biguplus_{j=1}^s2\oplus\mathcal{O}_{r_j}\right)\uplus\left(\bigcup_{j=s+1}^t\mathcal{O}_{r_j}\right)=\left(\bigcup_{j=1}^s\mathcal{O}_{r_j}\right)\uplus\left(\bigcup_{j\in S_U}\mathcal{O}_{j}\right).
\end{aligned}
\end{equation*}
Then from (\ref{2.7}),\;
\begin{equation*}
\begin{aligned}
(\mathcal{F}\Delta_{E})(z)&=\sum_{j=1}^s(\mathcal{F}\Delta_{O_{r_j}})(z)+\sum_{j\in S_c}(\mathcal{F}\Delta_{O_{j}})(z)\\
&=\sum_{j=1}^s(\mathcal{F}\Delta_{O_{r_i}})(z)+(\mathcal{F}\Delta_{\mathbb{Z}_{n}})(z)-\sum_{j|\delta_c}(\mathcal{F}\Delta_{O_{j}})(z)\\
&=\sum_{j=1}^s\mu\left(\frac{r_i}{(r_i,z)}\right)\frac{\varphi(r_i)}{\varphi\left(\frac{r_i}{(r_i,z)}\right)}
+n\Delta_{0}(z)-\delta_c\Delta_{\delta_c\mathbb{Z}_{n}}(z),
\end{aligned}
\end{equation*}
proving $(4)$.\;
\end{proof}
\begin{rmk}$U$ is a subset of $C_n$ (i.e.,\;not a multisubset) $\Leftrightarrow$ $\mathcal{I}_1=\emptyset$.\;
\end{rmk}

When $c=-1$,\;we can determine the structure of $U$.\;
\begin{lem}\label{l-3.8}Let $U=x^E$ be a multisubset of $C_n$ such that $e\not\in U$,\;$\Delta_U\leqslant2$ and
\[\chi(\overline{U})\in\{0,-1\}\]
for all nonprincipal characters $\chi$,\;then $U=C_n\setminus\{e\}$.\;
\end{lem}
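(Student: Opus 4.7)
The plan is to pin down $m := |\Gamma_{-1}|$ by combining the Fourier inversion identity from Lemma~\ref{l-3.6} with Parseval's identity, which I expect to force $m = n-1$ and $\Delta_E \equiv 1$ on $\mathbb{Z}_n\setminus\{0\}$, yielding exactly the desired conclusion.

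First I would specialize equation (\ref{3.12}) of Lemma~\ref{l-3.6} to $c = -1$. Setting $m := |\Gamma_{-1}|$, part (3) of Lemma~\ref{l-3.6} gives $|E| = -c|\Gamma_{-1}| = m$, and (\ref{3.12}) reads
\[
n\Delta_E(z) = m - (\mathcal{F}\Delta_{\Gamma_{-1}})(-z) \qquad \text{for all } z\in\mathbb{Z}_n.
\]
In particular $(\mathcal{F}\Delta_{\Gamma_{-1}})(-z)$ is real for every $z$, and as $z$ runs over $\mathbb{Z}_n\setminus\{0\}$ it takes only the three values $m$, $m-n$, or $m-2n$, according as $\Delta_E(z)$ equals $0$, $1$, or $2$.

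Let $a,b,d$ count the number of $z\in\mathbb{Z}_n\setminus\{0\}$ with $\Delta_E(z)=0,1,2$ respectively, so that $a+b+d=n-1$ and $b+2d=m$. Next I would apply Parseval's identity (a standard consequence of character orthogonality on $\mathbb{Z}_n$) to the $\{0,1\}$-valued characteristic function $\Delta_{\Gamma_{-1}}$ to obtain
\[
nm \;=\; \sum_{z\in\mathbb{Z}_n}|(\mathcal{F}\Delta_{\Gamma_{-1}})(z)|^2 \;=\; m^2 + a\,m^2 + b(m-n)^2 + d(m-2n)^2.
\]
Using the two linear relations $a+b+d=n-1$ and $b+2d=m$ to eliminate the coefficients of $m^2$ and $2mn$, this collapses after routine algebra to $b+4d = m(m+1)/n$, whence
\[
d = \frac{m(m+1-n)}{2n}, \qquad a = \frac{(m-(n-1))(m-2n)}{2n}.
\]

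The main obstacle I anticipate is the Parseval-based algebra (bookkeeping three unknowns against two linear identities), but the finish from the closed forms for $a$ and $d$ is immediate. Since $m = |E| \geq 1$ and $|E|\leq 2(n-1)<2n$, the factor $(m-2n)$ in the numerator of $a$ is strictly negative; hence $a\geq 0$ forces $m\leq n-1$, while $d\geq 0$ forces $m\geq n-1$. Consequently $m = n-1$, and substituting back yields $d=0$, $b=n-1$, $a=0$. This means $\Delta_E(z)=1$ for every $z\in\mathbb{Z}_n\setminus\{0\}$, so $E=\mathbb{Z}_n\setminus\{0\}$ and therefore $U = C_n\setminus\{e\}$, as claimed.
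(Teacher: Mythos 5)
Your proof is correct, and it takes a genuinely different route from the paper's. The paper first proves $\delta_{-1}=1$ by a case split on whether $E$ contains an element of multiplicity $2$: in the multiplicity-free case it evaluates $\mathcal{F}\Delta_E$ at $\delta_{-1}$ using the orbit formula of Lemma~\ref{l-3.6}(4), and in the other case it decomposes $\Gamma_{-1}$ into $\mathbb{Z}_n^{\ast}$-orbits via Lemma~\ref{l-2.10} and derives a contradiction from the Ramanujan-sum bound $1-\mu(\cdot)/\varphi(\cdot)\leqslant 2$; it then sandwiches $|U|$ between $n-1$ (from $\mathbb{Z}_n\setminus E=\{0\}$) and $n-1$ (from $\sum_{\chi\neq\chi_0}\chi(\overline{U})=-|U|\geqslant-(n-1)$). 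You instead take only the inversion identity (\ref{3.12}), observe that $\mathcal{F}\Delta_{\Gamma_{-1}}$ is three-valued off $0$, and apply Parseval to get exact formulas $d=\frac{m(m+1-n)}{2n}$ and $a=\frac{(m-(n-1))(m-2n)}{2n}$ for the multiplicity counts, whose nonnegativity pins $m=n-1$ and hence $a=d=0$ at once. (I checked the algebra; it is right, and the needed inputs $m\geqslant 1$ and $m\leqslant 2(n-1)$ come from the standing nonemptiness assumption and $\Delta_U\leqslant 2$.) Your approach is shorter and more elementary: it needs neither Lemma~\ref{l-2.10}, nor the orbit/Möbius machinery, nor the case split, and it handles the possible multiplicity-$2$ elements uniformly rather than by contradiction. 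What the paper's method buys is uniformity with the harder companion Lemma~\ref{l-3.9} (the $c=-2$ case, where the answer is not unique and the orbit analysis does real work). One small attribution slip: the identity $|E|=-c|\Gamma_{-1}|$ is not the statement of Lemma~\ref{l-3.6}(3) but appears inside its proof; since it follows immediately from evaluating (\ref{3.11}) at $z=0$ using $\Delta_E(0)=0$, this costs you nothing, but you should cite it that way.
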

\begin{proof}At first,\;$E=-E$ from Lemma \ref{l-3.6} $(1)$.\;We first assert that $\delta_{-1}=1$.\;We divide into two cases.\;\\
$\mathbf{Case\;1}$.\;If $\mathcal{I}_1=\emptyset$.\;Then from Lemma \ref{l-3.6} $(4)$,\;
\begin{equation*}
\begin{aligned}
(\mathcal{F}\Delta_{E})(z)=n\Delta_{0}(z)-\delta_{-1}\Delta_{\delta_{-1}\mathbb{Z}_{n}}(z).
\end{aligned}
\end{equation*}
Note that $1\leqslant \delta_{-1}\leqslant n-1$ and hence $\Delta_{0}(\delta_{-1})=0$.\;Then we have
\begin{equation*}
\begin{aligned}
(\mathcal{F}\Delta_{E})(\delta_{-1})=-\delta_{-1}\in\{0,-1\}.
\end{aligned}
\end{equation*}
This implies that $\delta_{-1}=1$.\;\\
$\mathbf{Case\;2}$.\;If $\mathcal{I}_1\neq\emptyset$.\;Then the (\ref{3.12}) becomes
\begin{equation}
\begin{aligned}
n\Delta_E(z)=-((\mathcal{F}\Delta_{\Gamma_{-1}})(-z)-|\Gamma_{-1}|)\Rightarrow(\mathcal{F}\Delta_{\Gamma_{-1}})(z)=|\Gamma_{-1}|-n\Delta_E(-z).
\end{aligned}
\end{equation}
This shows that  $\mathrm{Im}(\mathcal{F}(\Delta_{\Gamma_{-1}}))\in \mathbb{Q}$.\;Therefore Lemma \ref{l-2.10} implies that
\[\Gamma_{-1}=\mathcal{O}_{{c_1}}\cup \mathcal{O}_{{c_2}}\cup\cdots\cup \mathcal{O}_{{c_l}}\]
for some $1<c_1,c_2,\cdots,c_l\leqslant n$.\;Then $\delta_{-1}=\gcd(\frac{n}{c_1},\frac{n}{c_2},\cdots,\frac{n}{c_l})$ by definition (\ref{3.7}),\;and
\begin{equation*}
  (\mathcal{F}\Delta_{\Gamma_{-1}})(z)=\sum_{j=1}^l\mu\left(\frac{c_j}{(c_j,z)}\right)\frac{\varphi(c_j)}{\varphi\left(\frac{c_j}{(c_j,z)}\right)}=|\Gamma_{-1}|-n\Delta_E(-z)
\end{equation*}
by Lemma \ref{l-3.6} $(4)$.\;Since $|\Gamma_{-1}|=\sum_{j=1}^l\varphi(c_j)$,\;then
\begin{equation}
\sum_{j=1}^l\varphi(c_j)\left(1-\frac{\mu\left(\frac{c_j}{(c_j,z')}\right)}{\varphi\left(\frac{c_j}{(c_j,z')}\right)}\right)={n}\Delta_E(-z).
\end{equation}
Furthermore,\;we can choose some  $z'\in E$ such that $\Delta_E(z')=2$,\;since $\mathcal{I}_1\neq\emptyset$.\;But
\[2n={n}\Delta_E(-z')=\sum_{j=1}^l\varphi(c_j)\left(1-\frac{\mu\left(\frac{c_j}{(c_j,z')}\right)}{\varphi\left(\frac{c_j}{(c_j,z')}\right)}\right)\leqslant2\sum_{j=1}^l\varphi(c_j)=2|\Gamma_{-1}|<2n,\]
this is a contradiction.\;

Combining {${\mathbf{Case\;1}}$} and {${\mathbf{2}}$},\;we conclude that $\delta_{-1}=1$.\;It follows from Lemma \ref{l-3.6} $(2)$ that
$\mathbb{Z}_{n}\setminus E=\frac{n}{\delta_{-1}}\mathbb{Z}_{n}=\{0\}$,\;then $C_n\setminus\{e\}\subseteq U$ and hence $|U|\geqslant n-1$.\;On the other hand,\;noticing that,\;
\[-(n-1)\leqslant\sum_{\chi\neq\chi_0}\chi(\overline{U})=\sum_{\chi}\chi(\overline{U})-|U|=-|U|,\]
hence $|U|\leqslant n-1$.\;Therefore $U=C_n\setminus\{e\}$ holds.\;
\end{proof}

When $c=-2$,\;we can also determine the structure of $U$.\;
\begin{lem}\label{l-3.9}Let $U=x^E$ be a multisubset of $C_n$ such that $e\not\in U$,\;$\Delta_U\leqslant2$ and
\[\chi(\overline{U})\in\{0,-2\}\]
for all nonprincipal characters $\chi$,\;then $n$ is even and $U=C_n\setminus\{e,x^{\frac{n}{2}}\}$ or $U=\{x^{\frac{n}{2}}\}\uplus\left(C_n\setminus\{e\}\right)$.\;
\end{lem}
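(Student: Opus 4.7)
The plan is to mirror the proof of Lemma \ref{l-3.8}, with Case 2 requiring more delicate analysis since $c=-2$ admits genuinely nontrivial configurations of multiplicity-two elements. First, by Lemma \ref{l-3.6}(3), the condition $\chi(\overline{U})\in\{0,-2\}$ forces $|c|=2$ to divide $n$, so $n$ is even. The proof then splits according to whether $\mathcal{I}_1=\emptyset$ or not, and each sub-case will match one of the two stated conclusions.

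In the case $\mathcal{I}_1=\emptyset$, the multiset $U$ is a genuine set, so Lemma \ref{l-3.6}(4) simplifies to
\[(\mathcal{F}\Delta_E)(z)=n\Delta_0(z)-\delta_{-2}\Delta_{\delta_{-2}\mathbb{Z}_n}(z).\]
Evaluating at $z=\delta_{-2}$ gives $(\mathcal{F}\Delta_E)(\delta_{-2})=-\delta_{-2}\in\{0,-2\}$. Since $1\leqslant\delta_{-2}\leqslant n-1$, this forces $\delta_{-2}=2$. By Lemma \ref{l-3.6}(2), $\mathbb{Z}_n\setminus E=(n/2)\mathbb{Z}_n=\{0,n/2\}$, and hence $U=C_n\setminus\{e,x^{n/2}\}$, matching the first stated case.

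In the case $\mathcal{I}_1\neq\emptyset$, pick $z'\in E$ with $\Delta_E(z')=2$. As in the proof of Lemma \ref{l-3.8}, Fourier inversion gives
\[n\Delta_E(z)=-2\left((\mathcal{F}\Delta_{\Gamma_{-2}})(-z)-|\Gamma_{-2}|\right),\]
and so $(\mathcal{F}\Delta_{\Gamma_{-2}})(-z')=|\Gamma_{-2}|-n$. Combined with the trivial bound $(\mathcal{F}\Delta_{\Gamma_{-2}})(-z')\geqslant-|\Gamma_{-2}|$, this yields $|\Gamma_{-2}|\geqslant n/2$. I would then establish $\delta_{-2}=1$, so that Lemma \ref{l-3.6}(2) gives $\mathbb{Z}_n\setminus E=\{0\}$ as a set; equivalently $C_n\setminus\{e\}\subseteq U$. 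Writing $U=(C_n\setminus\{e\})\uplus U_0$ with $U_0\subseteq C_n\setminus\{e\}$, the hypothesis becomes $\chi(\overline{U_0})\in\{1,-1\}$ for every nonprincipal $\chi$. It then follows that $\overline{U_0}\,\overline{U_0^{(-1)}}=e+\mu\overline{C_n}$ for some integer $\mu$, whence $|U_0|^2\equiv 1\pmod{n}$. A direct case analysis on the admissible sizes isolates $U_0=\{x^{n/2}\}$ and yields $U=\{x^{n/2}\}\uplus(C_n\setminus\{e\})$.

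The main obstacle lies in Case 2. Unlike Lemma \ref{l-3.8}, where assuming $\mathcal{I}_1\neq\emptyset$ produces the immediate numerical contradiction $2n<2n$, here $c=-2$ admits the second configuration genuinely, so no contradiction is available and only a structural reduction is possible. The delicate points are (i) deriving $\delta_{-2}=1$ rigorously from $|\Gamma_{-2}|\geqslant n/2$ together with the Ramanujan sum description of $(\mathcal{F}\Delta_E)$ in Lemma \ref{l-3.6}(4), and (ii) eliminating spurious candidates for $U_0$ (other than $\{x^{n/2}\}$) in the final case analysis. Both steps rely on the symmetry $E=-E$ from Lemma \ref{l-3.6}(1) and the equality case of $|(\mathcal{F}\Delta_A)(z)|\leqslant|A|$ for a symmetric set $A$.
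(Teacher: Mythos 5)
Your Case 1 ($\mathcal{I}_1=\emptyset$) is exactly the paper's Case 1 and is correct. In Case 2 you take a genuinely different route: the paper never isolates a ``surplus'' set $U_0$; instead it shows $\Gamma_{-2}$ is a union of orbits, derives the Ramanujan-sum identity (\ref{3.16}), and splits on $\Delta_E(\tfrac{n}{2})\in\{0,1,2\}$, killing the first two sub-cases by a totient count and a parity argument and, in the third, pinning down $\Gamma_{-2}=\mathbb{Z}_n\setminus 2\mathbb{Z}_n$ so that Fourier inversion yields $E=\{\tfrac n2\}\uplus(\mathbb{Z}_n\setminus\{0\})$ directly. Your intermediate steps up to the decomposition are recoverable: from $(\mathcal{F}\Delta_{\Gamma_{-2}})(-z')=|\Gamma_{-2}|-n\geqslant-|\Gamma_{-2}|$ you get $|\Gamma_{-2}|\geqslant n/2$, and since $\Gamma_{-2}\subseteq\delta_{-2}\mathbb{Z}_n\setminus\{0\}$, any $\delta_{-2}\geqslant2$ would force $|\Gamma_{-2}|\leqslant n/2-1$; hence $\delta_{-2}=1$, $C_n\setminus\{e\}\subseteq U$, and $\chi(\overline{U_0})\in\{1,-1\}$ for all nonprincipal $\chi$ as you say.

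The genuine gap is the closing step. The candidate $U_0=C_n\setminus\{e\}$, i.e.\ $U=2\oplus(C_n\setminus\{e\})$, passes \emph{every} criterion you propose: $\chi(\overline{U_0})=-1$ for all nonprincipal $\chi$, so $\overline{U_0}\,\overline{U_0^{(-1)}}=e+\frac{(n-1)^2-1}{n}\overline{C_n}$ and $|U_0|^2=(n-1)^2\equiv1\pmod n$; moreover $E=-E$ and the relevant Fourier bounds are consistent (here $\Gamma_{-2}=\mathbb{Z}_n\setminus\{0\}$ and $(\mathcal{F}\Delta_{\Gamma_{-2}})(z)=-1$ for $z\neq0$). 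So ``a direct case analysis on the admissible sizes'' cannot isolate $U_0=\{x^{\frac n2}\}$: the size $n-1$ is admissible and is actually realized by a multiset satisfying all of the lemma's stated hypotheses. Excluding it requires input beyond $|U_0|^2\equiv1\pmod n$ and the symmetry $E=-E$ --- in the paper's framework this is exactly the delicate point of Case 2.3, where one must control the orbits $\mathcal{O}_{c_j}$ with $c_j\mid\frac n2$ inside $\Gamma_{-2}$, not merely those with $c_j\nmid\frac n2$ counted by (\ref{3.16}) at $z=\frac n2$. As written, your plan stalls precisely at the step that distinguishes the two admissible conclusions, so the proof does not close.
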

\begin{proof}The fact that $n$ is even follows from  Lemma \ref{l-3.6} $(3)$.\;To prove the next assertion,\;we divide into two cases.\;\\
$\mathbf{Case\;1}$.\;If $\mathcal{I}_1=\emptyset$.\;Then from Lemma \ref{l-3.6} $(4)$,\;$(\mathcal{F}\Delta_{E})(z)=-\delta_{-2}\Delta_{\delta_{-2}\mathbb{Z}_{n}}(z)\in\{0,-2\}$ for $0\neq z$.\;Thus  $\delta_{-2}=2$  and
\begin{equation*}
\begin{aligned}
E=\bigcup_{j|n,j\neq 1,2}\mathcal{O}_{j}=\mathbb{Z}_n\setminus\left\{0,\frac{n}{2}\right\}.
\end{aligned}
\end{equation*}
Hence $U=x^E=C_n\setminus\{e,x^{\frac{n}{2}}\}$.\;\\
$\mathbf{Case\;2}$.\;If $\mathcal{I}_1\neq\emptyset$.\;The (\ref{3.11}) becomes
\begin{equation}\label{3.15}
\begin{aligned}
n\Delta_E(z)=-2(\mathcal{F}\Delta_{\Gamma_{-2}})(-z)+|E|\Rightarrow(\mathcal{F}\Delta_{\Gamma_{-2}})(z)=\frac{|E|}{2}-\frac{n}{2}\Delta_E(-z).
\end{aligned}
\end{equation}
This shows that  $\mathrm{Im}(\mathcal{F}(\Delta_{\Gamma_{-1}}))\in \mathbb{Q}$.\;Thus Lemma \ref{l-2.10} also implies that
\[\Gamma_{-2}=\mathcal{O}_{{c_1}}\cup \mathcal{O}_{{c_2}}\cup\cdots\cup \mathcal{O}_{{c_l}}\]
for some $1<c_1,c_2,\cdots,c_l\leqslant n$.\;Then $\delta_{-2}=\gcd(\frac{n}{c_1},\frac{n}{c_2},\cdots,\frac{n}{c_l})$ by definition (\ref{3.7}),\;and
\begin{equation*}
(\mathcal{F}\Delta_{\Gamma_{-2}})(z)=\sum_{j=1}^l\mu\left(\frac{c_j}{(c_j,z)}\right)\frac{\varphi(c_j)}{\varphi\left(\frac{c_j}{(c_j,z)}\right)}=|\Gamma_{-2}|-\frac{n}{2}\Delta_E(-z).
\end{equation*}
Therefore,\;
\begin{equation}\label{3.16}
\sum_{j=1}^l\varphi(c_j)\left(1-\frac{\mu\left(\frac{c_j}{(c_j,z)}\right)}{\varphi\left(\frac{c_j}{(c_j,z)}\right)}\right)=\frac{n}{2}\Delta_E(-z).
\end{equation}
$\mathbf{Case\;2.1}$.\;$\Delta_E(\frac{n}{2})=0$.\;Therefore $\frac{n}{\delta_{-2}}|\frac{n}{2}$ from Lemma \ref{l-3.6} $(2)$,\;and then $2|\delta_{-2}$.\;It follows from the definition $\delta_{-2}=\gcd(\frac{n}{c_1},\frac{n}{c_2},\cdots,\frac{n}{c_l})$ that $c_j|\frac{n}{2}$ for each $1\leqslant j\leqslant l$.\;Since $\mathcal{I}_1\neq\emptyset$,\;selecting some $z'\in E$ such that $\Delta_E(z')=2$.\;Then
\[\frac{n}{2}\Delta_E(-z')=n=\sum_{j=1}^l\varphi(c_j)\left(1-\frac{\mu\left(\frac{c_j}{(c_j,z')}\right)}{\varphi\left(\frac{c_j}{(c_j,z')}\right)}\right)\leqslant2\sum_{j=1}^l\varphi(c_j)\leqslant2\sum_{d|\frac{n}{2}}\varphi(d)=n,\]
from (\ref{3.16}).\;This implies that $\{c_1,c_2,\cdots,c_l\}=\{d:1\leqslant d \leqslant\frac{n}{2},\;d|\frac{n}{2}\}$ and
\[\Gamma_{-2}=\mathcal{O}_{{c_1}}\cup \mathcal{O}_{{c_2}}\cup\cdots\cup \mathcal{O}_{{c_l}}=\bigcup_{d|\frac{n}{2}}\mathcal{O}_{d}=2\mathbb{Z}_n.\]
Therefore
\begin{equation}
\begin{aligned}
\Delta_E(z)=\frac{1}{n}\left(-2(\mathcal{F}\Delta_{\Gamma_{-2}})(-z)+|E|\right)=\frac{|E|}{n}-\Delta_{\frac{n}{2}\mathbb{Z}_n}(z).
\end{aligned}
\end{equation}
This implies $|E|=n$ since $\Delta_E(\frac{n}{2})=0$.\;But $2=\Delta_E(z')=1-\Delta_{\frac{n}{2}\mathbb{Z}_n}(z')<2$,\;which leads to a contradiction.\\
$\mathbf{Case\;2.2}$.\;$\Delta_E(\frac{n}{2})=1$.\;In this case,\;(\ref{3.16}) implies that
\[\sum_{\substack{j=1\\c_j\nmid\frac{n}{2}}}^l\varphi(c_j)=\frac{n}{4}.\]
Let $n=2^{\nu(n)}n_1$,\;then $c_j\nmid\frac{n}{2}$ implies that $c_j=2^{\nu(n)}d_j$ for some $d_j|n_1$ and $2\nmid d_j$.\;Therefore
\[\frac{n}{4}=2^{\nu(n)-2}n_1=\sum_{\substack{j=1\\c_j\nmid\frac{n}{2}}}^l\varphi(2^{\nu(n)}d_j)=2^{\nu(n)-1}\sum_{\substack{j=1\\c_j\nmid\frac{n}{2}}}^l\varphi(d_j)\]
This give that
\[n_1=2\sum_{\substack{j=1\\c_j\nmid\frac{n}{2}}}^l\varphi(d_j),\]
contradicting to $2\nmid n_1$.\\
$\mathbf{Case\;2.3}$.\;$\Delta_E(\frac{n}{2})=2$.\;In this case,\;(\ref{3.16}) implies that
\[\sum_{\substack{j=1\\c_j\nmid\frac{n}{2}}}^l\varphi(c_j)=\frac{n}{2}=\sum_{\substack{d|n\\d\nmid\frac{n}{2}}}^l\varphi(d).\]
This asserts that $\{c_1,c_2,\cdots,c_l\}=\{d:d|n,\;d\nmid\frac{n}{2}\}$ and then
\[\Gamma_{-2}=\mathcal{O}_{{c_1}}\cup \mathcal{O}_{{c_2}}\cup\cdots\cup \mathcal{O}_{{c_l}}=\mathbb{Z}_n\setminus\left(\bigcup_{d|\frac{n}{2}}\mathcal{O}_{d}\right)=\mathbb{Z}_n\setminus2\mathbb{Z}_n.\]
Therefore
\begin{equation}
\begin{aligned}
\Delta_E(z)=\frac{1}{n}\left(-2(\mathcal{F}\Delta_{\Gamma_{-2}})(-z)+|E|\right)=\frac{|E|}{n}-2\Delta_0(z)+\Delta_{\frac{n}{2}\mathbb{Z}_n}(z).
\end{aligned}
\end{equation}
Since $\Delta_E(\frac{n}{2})=2$,\;then $|E|=n$.\;Therefore
$$E=\left\{\frac{n}{2}\right\}\uplus\left(\mathbb{Z}_n\setminus\{0\}\right)$$
and
$$U=\{x^{\frac{n}{2}}\}\uplus\left(C_n\setminus\{e\}\right).$$\;
This completes the proof.\;
\end{proof}
\section{Basic lemmas of directed strongly regular dihedrant $Dih(n,X,X)$}
For any subset $X$ of $C_n$,\;we define $U_X=X\uplus X^{(-1)}$,\;where $X^{(-1)}=\{g^{-1}|g\in X\}$.\;We now give a sufficient and necessary condition for the dihedrant $Dih(n,X,Y)$ to be directed strongly regular.\;
\begin{lem}\label{l-4.1}The dihedrant $Dih(n,X,Y)$ is a DSRG with parameters $(2n,|X|+|Y|, \mu, \lambda, t)$ if and only if $X$ and $Y$ satisfy the following conditions:
\begin{flalign}
(i)\;&\overline{Y}\;\overline{U_X}=(\lambda-\mu)\overline{Y}+\mu\overline{C_n};\hspace{270pt}\label{4.1}\\
(ii)\;&\overline{X}^2+\overline{Y}\;\overline{Y^{-1}}=(t-\mu)e+(\lambda-\mu)\overline{X}+\mu\overline{C_n}.\label{4.2}
\end{flalign}
\end{lem}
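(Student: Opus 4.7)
The plan is to apply the group-ring characterization of DSRGs given in Lemma~\ref{l-2.6} directly to $G=D_n$ with $S=X\cup Y\tau$, expand $\overline{S}^2$ in $\mathbb{Z}[D_n]$, and then read off the two conditions by separating the coefficients supported on $C_n$ from those supported on the coset $C_n\tau$.

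First, I would note that every element of $\mathbb{Z}[D_n]$ has a unique representation as $A+B\tau$ with $A,B\in\mathbb{Z}[C_n]$, since $\{1,\tau\}$ is a transversal for $C_n$ in $D_n$. In particular, $\overline{D_n}=\overline{C_n}+\overline{C_n}\tau$ and $\overline{S}=\overline{X}+\overline{Y}\tau$. The crucial algebraic identity is the twist rule $\tau\,\overline{Z}=\overline{Z^{(-1)}}\,\tau$ for every $Z\subseteq C_n$, which follows at once from $\tau x^i=x^{-i}\tau$.

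Using this rule, I would compute
\begin{equation*}
\overline{S}^2=(\overline{X}+\overline{Y}\tau)(\overline{X}+\overline{Y}\tau)=\overline{X}^2+\overline{X}\,\overline{Y}\tau+\overline{Y}\tau\,\overline{X}+\overline{Y}\tau\,\overline{Y}\tau,
\end{equation*}
and rewrite the last two summands as $\overline{Y}\,\overline{X^{(-1)}}\tau$ and $\overline{Y}\,\overline{Y^{(-1)}}\tau^2=\overline{Y}\,\overline{Y^{(-1)}}$. Since $C_n$ is abelian, $\overline{X}\,\overline{Y}=\overline{Y}\,\overline{X}$, so grouping terms gives
\begin{equation*}
\overline{S}^2=\bigl(\overline{X}^2+\overline{Y}\,\overline{Y^{(-1)}}\bigr)+\overline{Y}\bigl(\overline{X}+\overline{X^{(-1)}}\bigr)\tau=\bigl(\overline{X}^2+\overline{Y}\,\overline{Y^{(-1)}}\bigr)+\overline{Y}\,\overline{U_X}\,\tau.
\end{equation*}
On the other hand, expanding the right-hand side of the DSRG equation from Lemma~\ref{l-2.6},
\begin{equation*}
te+\lambda\overline{S}+\mu(\overline{D_n}-e-\overline{S})=\bigl[(t-\mu)e+(\lambda-\mu)\overline{X}+\mu\overline{C_n}\bigr]+\bigl[(\lambda-\mu)\overline{Y}+\mu\overline{C_n}\bigr]\tau.
\end{equation*}

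Finally, by the uniqueness of the decomposition $\mathbb{Z}[D_n]=\mathbb{Z}[C_n]\oplus\mathbb{Z}[C_n]\tau$, the DSRG identity $\overline{S}^2=te+\lambda\overline{S}+\mu(\overline{D_n}-e-\overline{S})$ holds if and only if the $C_n$-parts and the $C_n\tau$-parts agree separately, which yields exactly conditions (ii) and (i) of the lemma. Since this equivalence is biconditional at each step, and $|S|=|X|+|Y|$ is immediate, the proof is complete. The computation is entirely mechanical; the only subtlety, and the one step worth highlighting, is the correct application of the twist rule $\tau\,\overline{Z}=\overline{Z^{(-1)}}\tau$ inside $\overline{Y}\tau\,\overline{X}$, so I would state that identity explicitly before expanding.
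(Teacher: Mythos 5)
Your proposal is correct and follows essentially the same route as the paper: expand $\overline{S}^2=(\overline{X}+\overline{Y}\tau)^2$ in $\mathbb{Z}[D_n]$ using the twist rule $\tau\,\overline{Z}=\overline{Z^{(-1)}}\tau$, and compare the $\mathbb{Z}[C_n]$ and $\mathbb{Z}[C_n]\tau$ components against the expansion of $te+\lambda\overline{S}+\mu(\overline{D_n}-e-\overline{S})$ via Lemma~\ref{l-2.6}. Your explicit appeal to the direct-sum decomposition $\mathbb{Z}[D_n]=\mathbb{Z}[C_n]\oplus\mathbb{Z}[C_n]\tau$ is only a slightly more careful phrasing of what the paper leaves implicit.
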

\begin{proof}Note that
\begin{equation*}
\begin{aligned}
(\overline{X}+\overline{Y\tau})^2&=\overline{{X}}\;\overline{{X}}+\overline{X}\;\overline{Y\tau}+\overline{Y\tau}\;\overline{X}+\overline{Y\tau}\;\overline{Y\tau}=\overline{{X}}\;\overline{{X}}+\overline{{Y}}\;\overline{Y^{(-1)}}+(\overline{{X}}\;\overline{{Y}}+\overline{{Y}}\;\overline{{X^{(-1)}}})\tau\\
&=(\overline{Y}\;\overline{U_X})\tau+(\overline{X}^2+\overline{Y}\;\overline{Y^{-1}}).
\end{aligned}
\end{equation*}
Thus,\;from Lemma \ref{l-2.6},\;the dihedrant $Dih(n,X,Y)$ is a DSRG with parameters $(2n,|X|+|Y|, \mu, \lambda, t)$ if and only if
\begin{equation*}
\begin{aligned}
(\overline{Y}\;\overline{U_X})\tau+(\overline{X}^2+\overline{Y}\;\overline{Y^{-1}})&=te+\lambda(\overline{X}+\overline{Y\tau})+\mu(\overline{D_n}-(\overline{X}+\overline{Y\tau})-e)\\
&=(t-\mu)e+(\lambda-\mu)\overline{X}+\mu\overline{C_n}+((\lambda-\mu)\overline{Y}+\mu\overline{C_n})\tau\\
\end{aligned}
\end{equation*}
This  is equivalent to the conditions $(\ref{4.1})$ and $(\ref{4.2})$.\;
\end{proof}
When $Y=X$,\;we have the following lemma.\;
\begin{lem}\label{l-4.2}The dihedrant $Dih(n,X,X)$ is a DSRG with parameters $(2n,2|X|, \mu, \lambda, t)$ if and only if $t=\mu$ and $X$ satisfy the following conditions:
\begin{equation}
\overline{X}\;\overline{U_X}=(\lambda-\mu)\overline{X}+\mu\overline{C_n}.
\end{equation}
\end{lem}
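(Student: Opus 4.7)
The plan is to derive Lemma \ref{l-4.2} as a direct specialization of Lemma \ref{l-4.1} with $Y=X$, and then to recognize that the two resulting conditions collapse into a single equation together with the constraint $t=\mu$.

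First I would substitute $Y=X$ into condition $(\ref{4.1})$ of Lemma \ref{l-4.1}, which immediately produces
\[\overline{X}\;\overline{U_X}=(\lambda-\mu)\overline{X}+\mu\overline{C_n}.\]
This is the equation appearing in the statement of Lemma \ref{l-4.2}. So the forward implication of Lemma \ref{l-4.2} will follow once I also show $t=\mu$ from condition $(\ref{4.2})$.

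Next, I would examine condition $(\ref{4.2})$ of Lemma \ref{l-4.1} with $Y=X$, that is,
\[\overline{X}^2+\overline{X}\;\overline{X^{(-1)}}=(t-\mu)e+(\lambda-\mu)\overline{X}+\mu\overline{C_n}.\]
The key algebraic observation is that the left-hand side factors as
\[\overline{X}^2+\overline{X}\;\overline{X^{(-1)}}=\overline{X}\bigl(\overline{X}+\overline{X^{(-1)}}\bigr)=\overline{X}\;\overline{U_X},\]
so condition $(\ref{4.2})$ becomes $\overline{X}\;\overline{U_X}=(t-\mu)e+(\lambda-\mu)\overline{X}+\mu\overline{C_n}$. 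Subtracting condition $(\ref{4.1})$ (with $Y=X$) from this rewritten $(\ref{4.2})$ yields $0=(t-\mu)e$ in the group ring $\mathbb{Z}[C_n]$, and since $e$ has coefficient $1$ this forces $t-\mu=0$, i.e.\ $t=\mu$. Conversely, if $t=\mu$ and $\overline{X}\;\overline{U_X}=(\lambda-\mu)\overline{X}+\mu\overline{C_n}$, then the factorization above shows that $(\ref{4.2})$ reduces exactly to $(\ref{4.1})$, so both conditions of Lemma \ref{l-4.1} are satisfied.

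There is essentially no obstacle in this proof: the whole argument is the factorization $\overline{X}^2+\overline{X}\;\overline{X^{(-1)}}=\overline{X}\;\overline{U_X}$ combined with the subtraction that isolates $(t-\mu)e$. The only small point to mention explicitly is that equality of group ring elements is coefficient-wise, so the identity $(t-\mu)e=0$ in $\mathbb{Z}[C_n]$ genuinely forces $t=\mu$ as an integer equation. With that noted, both directions of Lemma \ref{l-4.2} follow immediately from Lemma \ref{l-4.1}.
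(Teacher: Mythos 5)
Your proof is correct and is exactly the argument the paper intends: Lemma \ref{l-4.2} is stated as an immediate specialization of Lemma \ref{l-4.1} to $Y=X$, and your observation that $\overline{X}^2+\overline{X}\;\overline{X^{(-1)}}=\overline{X}\;\overline{U_X}$ reduces condition $(\ref{4.2})$ to condition $(\ref{4.1})$ plus the term $(t-\mu)e$, forcing $t=\mu$, is precisely the intended reasoning.
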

Define the notation
\[\delta_{P,Q}=\left\{
  \begin{array}{ll}
    1, & \hbox{$P=Q$;} \\
    0, & \hbox{$P\neq Q$.}
  \end{array}
\right.\]
The following lemma give a sufficient and necessary condition for the dihedrant $Dih(n,X,X)$ to be directed strongly regular,\;by using the characters of $C_n$.\;
\begin{lem}\label{l-4.3}The dihedrant $Dih(n,X,X)$ is a DSRG with parameters $(2n,2|X|, \mu, \lambda, t)$ if and only if
\begin{equation}
\chi(\overline{X})\chi(\overline{U_X})=(\lambda-\mu)\chi(\overline{X})+\mu n\delta_{\chi,\chi_0}
\end{equation}
hold for any $\chi\in \widehat{C_n}$.
\end{lem}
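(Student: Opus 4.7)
The plan is to reduce the statement to Lemma~\ref{l-4.2} by applying the Inversion Formula (Lemma~\ref{l-2.7}). Lemma~\ref{l-4.2} characterizes the dihedrant $Dih(n,X,X)$ as a DSRG with parameters $(2n,2|X|,\mu,\lambda,t)$ by the conjunction of the group-ring identity
\[\overline{X}\;\overline{U_X}=(\lambda-\mu)\overline{X}+\mu\overline{C_n}\]
in $\mathbb{Z}[C_n]$ together with the parameter equality $t=\mu$. My goal is therefore to show that this conjunction is equivalent to the character equation in the statement.

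The forward direction is routine. I apply any $\chi\in\widehat{C_n}$ (extended $\mathbb{C}$-linearly to $\mathbb{C}[C_n]$) to both sides of the group-ring identity and use the orthogonality relation $\chi(\overline{C_n})=n\delta_{\chi,\chi_0}$ to obtain the stated character equation. For the converse, I first apply Lemma~\ref{l-2.7} to deduce the group-ring identity from the character equation holding for every $\chi\in\widehat{C_n}$.

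The only subtlety, and the step I expect to be the main obstacle, is verifying that $t=\mu$ follows automatically from the character equation rather than being an extra assumption. I plan to obtain it by comparing the coefficient of $e$ on both sides of the group-ring identity: since $e\notin X$, the right-hand side contributes $\mu$, while the left-hand side contributes $\sum_{x\in X}\Delta_{U_X}(x^{-1})=|X|+|X\cap X^{(-1)}|$. This sum is precisely the number of undirected edges at each vertex of $Dih(n,X,X)$, i.e.\ the parameter $t$. Hence $t=\mu$, and Lemma~\ref{l-4.2} then delivers the DSRG conclusion. Once this coefficient comparison is observed, the rest of the argument is bookkeeping on Fourier inversion and character orthogonality.
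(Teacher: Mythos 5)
Your proposal is correct and follows essentially the same route the paper intends: Lemma~\ref{l-4.3} is just the character-wise transcription of Lemma~\ref{l-4.2} via the inversion formula (Lemma~\ref{l-2.7}), with $\chi(\overline{C_n})=n\delta_{\chi,\chi_0}$ handling the $\mu\overline{C_n}$ term. Your coefficient-of-$e$ computation showing $t=|X|+|X\cap X^{(-1)}|=\mu$ is exactly the content the paper buries in the passage from Lemma~\ref{l-4.1} to Lemma~\ref{l-4.2} (where comparing conditions (\ref{4.1}) and (\ref{4.2}) with $Y=X$ forces $(t-\mu)e=0$), so nothing is missing.
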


We now give another version of Lemma \ref{l-4.1}.\;Let $X=x^E$ and $Y=x^F$,\;where $E$ and $F$ are some multisubets of $\mathbb{Z}_n$.\;We now define
\[\mathbf{r}_E(z)=(\mathcal{F}\Delta_E)(z)=\sum_{i\in E}\zeta_n^{iz}\;\text{and}\;\mathbf{r}_F(z)=(\mathcal{F}\Delta_F)(z)=\sum_{i\in F}\zeta_n^{iz}.\]
Then $\mathbf{r}_E(z)+\overline{\mathbf{r}_E(z)}=(\mathcal{F}\Delta_{X\uplus(-X)})(z)$.\;The following lemma give a sufficient and necessary condition for the dihedrant $Dih(n,X,Y)$ to be directed strongly regular by using $\mathbf{r}_E(z)$ and $\mathbf{r}_F(z)$.\;
\begin{lem}\label{l-4.4}The dihedrant $Dih(n,X,Y)$ is a DSRG with parameters $(2n,|X|+|Y|, \mu, \lambda, t)$ if and only if $\mathbf{r}$ and $\mathbf{t}$ satisfy the following conditions:
\begin{flalign}
(i)\;&\mathbf{r}_F(\mathbf{r}_E+\overline{\mathbf{r}}_E)=\mu n\Delta_0+(\lambda-\mu)\mathbf{r}_F;\hspace{250pt}\label{4.5}\\
(ii)\;&\mathbf{r}_E^2+|\mathbf{r}_F|^2=t-\mu+\mu n\Delta_0+(\lambda-\mu)\mathbf{r}_F.\label{4.6}
\end{flalign}
\end{lem}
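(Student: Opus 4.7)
The plan is to reduce Lemma~\ref{l-4.4} to Lemma~\ref{l-4.1} by passing from the group‐ring identities in $\mathbb{Z}[C_n]$ to their image under each character of $C_n$, and then invoking the inversion formula (Lemma~\ref{l-2.7}), which states that two elements of $\mathbb{C}[C_n]$ agree if and only if they agree under every character in $\widehat{C_n}$. Thus the task is essentially a routine translation of the conditions \eqref{4.1} and \eqref{4.2} into the Fourier‐transform language encoded by $\mathbf{r}_E$ and $\mathbf{r}_F$.

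First I would record the dictionary that converts characters into Fourier transforms. For any $\chi_z \in \widehat{C_n}$, equation \eqref{2.5} gives $\chi_z(\overline{X}) = \chi_z(\overline{x^E}) = (\mathcal{F}\Delta_E)(z) = \mathbf{r}_E(z)$ and similarly $\chi_z(\overline{Y}) = \mathbf{r}_F(z)$. From \eqref{2.4} we obtain $\chi_z(\overline{X^{(-1)}}) = \overline{\mathbf{r}_E(z)}$ and $\chi_z(\overline{Y^{(-1)}}) = \overline{\mathbf{r}_F(z)}$, so $\chi_z(\overline{U_X}) = \chi_z(\overline{X}) + \chi_z(\overline{X^{(-1)}}) = \mathbf{r}_E(z) + \overline{\mathbf{r}_E(z)}$ and $\chi_z(\overline{Y}\,\overline{Y^{(-1)}}) = |\mathbf{r}_F(z)|^2$. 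Finally, $\chi_z(e) = 1$ and, by \eqref{2.6}, $\chi_z(\overline{C_n}) = n\Delta_0(z)$.

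Next I would apply $\chi_z$ to both sides of \eqref{4.1} and \eqref{4.2} termwise. The identity \eqref{4.1} becomes
\[
\mathbf{r}_F(z)\bigl(\mathbf{r}_E(z) + \overline{\mathbf{r}_E(z)}\bigr) = (\lambda-\mu)\mathbf{r}_F(z) + \mu n\Delta_0(z),
\]
which is precisely \eqref{4.5} evaluated at $z$, and the identity \eqref{4.2} becomes
\[
\mathbf{r}_E(z)^2 + |\mathbf{r}_F(z)|^2 = (t-\mu) + (\lambda-\mu)\mathbf{r}_E(z) + \mu n\Delta_0(z),
\]
which is \eqref{4.6} evaluated at $z$. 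Since by Lemma~\ref{l-2.7} these pointwise equalities across all $z \in \mathbb{Z}_n$ are equivalent to the corresponding identities in $\mathbb{C}[C_n]$, the conditions of Lemma~\ref{l-4.1} are equivalent to the conditions of Lemma~\ref{l-4.4}.

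There is no real obstacle here; the only thing to be careful about is keeping track of the involution $X \mapsto X^{(-1)}$ under the Fourier transform (which gives complex conjugation on the transform side) and remembering that $\chi_z(\overline{C_n})$ vanishes for every nonprincipal character, so that the term $\mu n \Delta_0$ supplies the right contribution only at $z = 0$. Once the character table of $C_n$ has been applied coordinate‐wise to the two group‐ring identities of Lemma~\ref{l-4.1}, the inversion formula closes the argument in both directions.
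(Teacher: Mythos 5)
Your proposal is correct and is exactly the paper's argument: the paper's entire proof of this lemma is the one line ``By applying the Fourier transformation on (\ref{4.1}) and (\ref{4.2})'', and you have simply spelled out the character-by-character translation via (\ref{2.4})--(\ref{2.6}) and Lemma \ref{l-2.7}. Note only that applying $\chi_z$ to (\ref{4.2}) yields $(\lambda-\mu)\mathbf{r}_E$ on the right-hand side, as you correctly derived, so the $(\lambda-\mu)\mathbf{r}_F$ appearing in the stated condition (\ref{4.6}) is evidently a typo in the paper (harmless in its later use, where $Y=X$ forces $E=F$).
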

\begin{proof}By applying the Fourier transformation on (\ref{4.1}) and (\ref{4.2}).\;
\end{proof}
When $Y=X$,\;we have the following lemma.\;
\begin{lem}\label{l-4.5}The dihedrant $Dih(n,X,X)$ is a DSRG with parameters $(2n,2|X|, \mu, \lambda, t)$ if and only if $t=\mu$ and the function $\mathbf{r}_E$  satisfies
\begin{equation}\label{4.7}
\mathbf{r}_E(\mathbf{r}_E+\overline{\mathbf{r}_E})=\mu n\Delta_0+(\lambda-\mu)\mathbf{r}_E.
\end{equation}
\end{lem}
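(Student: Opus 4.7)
The plan is to derive Lemma \ref{l-4.5} as a direct specialization of Lemma \ref{l-4.4} by setting $Y = X$, so that $F = E$ and $\mathbf{r}_F = \mathbf{r}_E$. I would first substitute $\mathbf{r}_F = \mathbf{r}_E$ into the two conditions (\ref{4.5}) and (\ref{4.6}) of Lemma \ref{l-4.4}. After this substitution, condition (\ref{4.5}) becomes exactly the identity (\ref{4.7}) asserted in the statement, while condition (\ref{4.6}) becomes
\[
\mathbf{r}_E^2 + |\mathbf{r}_E|^2 = (t-\mu) + \mu n\Delta_0 + (\lambda-\mu)\mathbf{r}_E.
\]

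The key observation is that the left-hand sides of the two conditions coincide once $Y = X$, since
\[
\mathbf{r}_E(\mathbf{r}_E + \overline{\mathbf{r}_E}) = \mathbf{r}_E^2 + \mathbf{r}_E\overline{\mathbf{r}_E} = \mathbf{r}_E^2 + |\mathbf{r}_E|^2.
\]
Thus the two conditions together are equivalent to (\ref{4.7}) plus the equality of their right-hand sides, namely
\[
\mu n\Delta_0 + (\lambda-\mu)\mathbf{r}_E = (t-\mu) + \mu n\Delta_0 + (\lambda-\mu)\mathbf{r}_E,
\]
which as an identity of functions on $\mathbb{Z}_n$ is equivalent to $t - \mu = 0$.

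For the forward direction, assuming $Dih(n,X,X)$ is a DSRG with the given parameters, Lemma \ref{l-4.4} yields both (\ref{4.5}) and (\ref{4.6}), from which the argument above extracts $t = \mu$ and (\ref{4.7}). For the converse, assuming $t = \mu$ and (\ref{4.7}), condition (\ref{4.5}) holds because $\mathbf{r}_F = \mathbf{r}_E$, and condition (\ref{4.6}) reduces to (\ref{4.7}) once the constant term $t-\mu$ vanishes; Lemma \ref{l-4.4} then guarantees that $Dih(n,X,X)$ is a DSRG with parameters $(2n,2|X|,\mu,\lambda,t)$. There is no real obstacle here: the proof is a one-line substitution together with the algebraic identity $\mathbf{r}_E(\mathbf{r}_E + \overline{\mathbf{r}_E}) = \mathbf{r}_E^2 + |\mathbf{r}_E|^2$, so the write-up can simply invoke Lemma \ref{l-4.4} and compare the two resulting equations.
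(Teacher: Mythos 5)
Your proposal is correct and matches the paper's intent exactly: the paper states Lemma \ref{l-4.5} as an immediate specialization of Lemma \ref{l-4.4} to $Y=X$ (offering no separate proof), and your observation that $\mathbf{r}_E(\mathbf{r}_E+\overline{\mathbf{r}_E})=\mathbf{r}_E^2+|\mathbf{r}_E|^2$ forces the two conditions to differ only by the constant $t-\mu$ is precisely the intended argument.
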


\begin{lem}\label{l-4.6}Let $Dih(n,X,X)$ be a directed strongly regular dihedrant with parameters $(2n,2|X|, \mu, \lambda, t)$,\;\\then
\begin{equation}\label{3.6}
\chi(\overline{U_X})\in\{0,\lambda-\mu\}
\end{equation}
for all nonprincipal characters $\chi$ of $C_n$ and $\mu-\lambda$ is a divisor of $n$.
\end{lem}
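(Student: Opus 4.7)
The plan is two-pronged: the character-value dichotomy will come from applying nonprincipal characters to the group-ring identity in Lemma~\ref{l-4.2} and factoring in $\mathbb{C}$, while the divisibility $(\mu-\lambda)\mid n$ will follow by feeding $U_X$ into Lemma~\ref{l-3.6}(3).

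First I would take the identity $\overline{X}\,\overline{U_X}=(\lambda-\mu)\overline{X}+\mu\overline{C_n}$ provided by Lemma~\ref{l-4.2} and evaluate an arbitrary nonprincipal character $\chi\in\widehat{C_n}$ on both sides. Since $\chi(\overline{C_n})=0$, this collapses to $\chi(\overline{X})\bigl(\chi(\overline{U_X})-(\lambda-\mu)\bigr)=0$, and because $\mathbb{C}$ is an integral domain one of the two factors must vanish. If $\chi(\overline{X})=0$, then using $U_X=X\uplus X^{(-1)}$ together with $\chi(\overline{X^{(-1)}})=\overline{\chi(\overline{X})}$ (which follows from $\chi(g^{-1})=\overline{\chi(g)}$) one immediately gets $\chi(\overline{U_X})=0$; otherwise $\chi(\overline{U_X})=\lambda-\mu$. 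Either way, $\chi(\overline{U_X})\in\{0,\lambda-\mu\}$ for every nonprincipal $\chi$.

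Next I would check that $U_X$ satisfies the hypotheses of Lemma~\ref{l-3.6}. Since $e\notin X$ by the Cayley-graph convention, one has $e\notin U_X$; and because each element of $C_n$ appears at most once in $X$ and at most once in $X^{(-1)}$, the multiplicity function satisfies $\Delta_{U_X}\leqslant 2$. Writing $U_X=x^E$ for a multisubset $E$ of $\mathbb{Z}_n\setminus\{0\}$, the previous paragraph translates into $(\mathcal{F}\Delta_E)(z)\in\{0,\lambda-\mu\}$ for every $0\neq z\in\mathbb{Z}_n$. With $c:=\lambda-\mu$, Lemma~\ref{l-3.6}(3) then delivers $|c|\mid n$, i.e.\ $\mu-\lambda$ divides $n$.

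The one delicate point, and the only real potential obstacle, is ensuring $c\neq 0$ before invoking Lemma~\ref{l-3.6}(3), since that lemma is stated for a nonzero constant. This is supplied by Lemma~\ref{l-4.2} (which forces $t=\mu$) combined with the remark following Proposition~\ref{p-2.1} (which then gives $\lambda-\mu<0$); equivalently, the identity $d^{2}=(\mu-\lambda)^{2}+4(t-\mu)$ of Proposition~\ref{p-2.1} rules out $\mu=\lambda$ because $d$ must be a positive integer. Once $c\neq 0$ is secured, the rest of the argument is purely formal character-theoretic bookkeeping.
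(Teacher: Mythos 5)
Your proposal is correct and follows essentially the same route as the paper: both evaluate nonprincipal characters on the group-ring identity of Lemma~\ref{l-4.2}/\ref{l-4.3} to obtain the dichotomy $\chi(\overline{U_X})\in\{0,\lambda-\mu\}$ (the paper adds the equation to its complex conjugate to get a quadratic in $\chi(\overline{U_X})$, whereas you factor $\chi(\overline{X})\bigl(\chi(\overline{U_X})-(\lambda-\mu)\bigr)=0$ directly and treat the case $\chi(\overline{X})=0$ via $\chi(\overline{U_X})=\chi(\overline{X})+\overline{\chi(\overline{X})}$ --- a cosmetic difference), and both then invoke Lemma~\ref{l-3.6}(3) for the divisibility. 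Your explicit verification of the hypotheses of Lemma~\ref{l-3.6} (that $e\notin U_X$, that $\Delta_{U_X}\leqslant 2$, and that $\lambda-\mu\neq 0$) is a point of care the paper omits, but it does not change the argument.
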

\begin{proof}From Lemma \ref{l-4.3},\;we have
\begin{equation*}
\chi(\overline{X})(\chi(\overline{X})+\overline{\chi(\overline{X})})=\chi(\overline{X})\chi(\overline{U_X})=(\lambda-\mu)\chi(\overline{X})+\mu n\delta_{\chi,\chi_0}
\end{equation*}
hold for any $\chi\in \widehat{C_n}$.\;We can get
\begin{equation*}
\overline{\chi(\overline{X})}(\chi(\overline{X})+\overline{\chi(\overline{X})})=\overline{\chi(\overline{X})}\chi(\overline{U_X})=(\lambda-\mu)\overline{\chi(\overline{X})}+\mu n\delta_{\chi,\chi_0}
\end{equation*}
by taking conjugate on the above equation.\;Adding these two expressions gives
\begin{equation*}
(\chi(\overline{U_X}))^2=(\lambda-\mu)\chi(\overline{U_X})+2\mu n\delta_{\chi,\chi_0}.
\end{equation*}
Therefore,\;
\begin{equation*}
\chi(\overline{U_X})\in\{0,\lambda-\mu\}.
\end{equation*}
for all nonprincipal characters $\chi$ of $C_n$.\;Then the fact that $\mu-\lambda$ is a divisor of $n$ now follows from Lemma \ref{l-3.6} $(3)$ directly.
\end{proof}
We also need the following lemma.\;
\begin{lem}\label{l-4.7}Let $E$ be a multisubset of $\mathbb{Z}_n$ such that
\[\mathbf{r}_E(z)=0\]
for all $z\not\in d\mathbb{Z}_n$.\;Then there is a multisubet $E'$ of $\{0,1,2,\cdots,d-1\}$ such that
\[E=E'+\frac{n}{d}\mathbb{Z}_n.\]
\end{lem}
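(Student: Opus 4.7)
The plan is to apply the Fourier inversion formula (\ref{2.3}) to $\Delta_E$ and exploit the Pontryagin-type duality between vanishing on the complement of a subgroup and invariance under its annihilator.

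First, I would note that by (\ref{2.3}),
\[
n\Delta_E(-w)=(\mathcal{F}\mathbf{r}_E)(w)=\sum_{z\in\mathbb{Z}_n}\mathbf{r}_E(z)\zeta_n^{zw},
\]
and the hypothesis $\mathbf{r}_E(z)=0$ for $z\notin d\mathbb{Z}_n$ reduces this to a sum over $z\in d\mathbb{Z}_n$ only:
\[
n\Delta_E(-w)=\sum_{z\in d\mathbb{Z}_n}\mathbf{r}_E(z)\zeta_n^{zw}.
\]

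Next, the key observation is that for any $u\in\tfrac{n}{d}\mathbb{Z}_n$ and any $z\in d\mathbb{Z}_n$, the product $zu$ is an integer multiple of $n$, so $\zeta_n^{zu}=1$. Replacing $w$ by $w+u$ in the restricted sum therefore does not change any term, yielding
\[
n\Delta_E(-w-u)=n\Delta_E(-w)\quad\text{for all }u\in\tfrac{n}{d}\mathbb{Z}_n.
\]
Since $\tfrac{n}{d}\mathbb{Z}_n$ is closed under negation, this says that $\Delta_E$ is constant on every coset of the subgroup $\tfrac{n}{d}\mathbb{Z}_n$ in $\mathbb{Z}_n$.

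Finally, I would turn this invariance into the desired multiset equation: $E$ is a multi-union of cosets of $\tfrac{n}{d}\mathbb{Z}_n$, so choosing one representative $r_C$ from each coset $C$ (these representatives can be taken from $\{0,1,\ldots,d-1\}$) and giving $r_C$ the multiplicity equal to the common value of $\Delta_E$ on $C$ defines the multisubset $E'$, and one checks directly from the definition of $+$ on multisets that $\Delta_{E'+\frac{n}{d}\mathbb{Z}_n}(z)=\Delta_E(z)$ for every $z\in\mathbb{Z}_n$, i.e.\ $E=E'+\tfrac{n}{d}\mathbb{Z}_n$.

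There is no real obstacle here: once the Fourier inversion is written out, the argument is essentially bookkeeping of multiplicities, and every ingredient (formulas (\ref{2.2}), (\ref{2.3}), and the definition of $v\mathbb{Z}_n$) is already in place in the paper.
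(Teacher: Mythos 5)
Your proof is correct and follows essentially the same route as the paper: both apply the Fourier inversion formula $\mathcal{F}(\mathcal{F}\Delta_E)(w)=n\Delta_E(-w)$, discard the terms with $z\notin d\mathbb{Z}_n$ by hypothesis, and observe that the surviving exponentials $\zeta_n^{zu}$ with $z\in d\mathbb{Z}_n$, $u\in\frac{n}{d}\mathbb{Z}_n$ equal $1$, so $\Delta_E$ is constant on cosets of $\frac{n}{d}\mathbb{Z}_n$. The final bookkeeping step identifying $E$ with $E'+\frac{n}{d}\mathbb{Z}_n$ is the same conclusion the paper draws.
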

\begin{proof}For any $a\in\frac{n}{d}\mathbb{Z}_n $,\;it follows from inverse formula (\ref{2.3}) that
\begin{equation*}
\begin{aligned}
n(\Delta_E(z+a)-\Delta_E(z))&=\sum_{i\in \mathbb{Z}_n}\mathbf{r}_E(i)(\omega^{-i(z+a)}-\omega^{-iz})\\
&=\sum_{i\in d\mathbb{Z}_n}\mathbf{r}_E(i)(\omega^{-i(z+a)}-\omega^{-iz})+\sum_{i\not\in d\mathbb{Z}_n}\mathbf{r}_E(i)(\omega^{-i(z+a)}-\omega^{-iz})\\
&=\sum_{i\in d\mathbb{Z}_n}\mathbf{r}_E(i)(\omega^{-i(z+a)}-\omega^{-iz})=0.
\end{aligned}
\end{equation*}
This shows that $E$ is a union of some cosets of $\frac{n}{d}\mathbb{Z}_n$ in $\mathbb{Z}_n$,\;proving this lemma.\;
\end{proof}
\begin{cor}\label{r-4.8}Let $T$ be a multisubset of $C_n$ such that
\[\chi_z(\overline{T})=0\]
for $z\not\in d\mathbb{Z}_n$.\;Then there is a multisubet $T'$ of $\{e,x^1,x^2,\cdots,x^{d-1}\}$ such that
\[T=T'\langle x^{\frac{n}{d}}\rangle.\]
\end{cor}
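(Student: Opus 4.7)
The plan is to translate the hypothesis from the multiplicative language of $C_n$ into the additive Fourier language on $\mathbb{Z}_n$, then invoke Lemma \ref{l-4.7} directly, and finally translate back.

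First, I would write $T = x^E$ for a (uniquely determined) multisubset $E$ of $\mathbb{Z}_n$, where $\Delta_E(i) = \Delta_T(x^i)$. By equation (\ref{2.5}), for every $z \in \mathbb{Z}_n$ we have $(\mathcal{F}\Delta_E)(z) = \chi_z(\overline{x^E}) = \chi_z(\overline{T})$. Thus the hypothesis $\chi_z(\overline{T}) = 0$ for all $z \notin d\mathbb{Z}_n$ becomes precisely $\mathbf{r}_E(z) = 0$ for all $z \notin d\mathbb{Z}_n$, which is the assumption of Lemma \ref{l-4.7}.

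Applying Lemma \ref{l-4.7}, there exists a multisubset $E'$ of $\{0, 1, 2, \ldots, d-1\}$ with $E = E' + \frac{n}{d}\mathbb{Z}_n$. Defining $T' = x^{E'}$, which is a multisubset of $\{e, x, x^2, \ldots, x^{d-1}\}$, I would conclude by noting that under the identification between multisubsets of $\mathbb{Z}_n$ and of $C_n$, the additive sum $E' + \frac{n}{d}\mathbb{Z}_n$ corresponds to the product $x^{E'} \cdot x^{\frac{n}{d}\mathbb{Z}_n} = T' \langle x^{n/d}\rangle$ (recalling that $\langle x^{n/d}\rangle = x^{\frac{n}{d}\mathbb{Z}_n}$ as stated in the notation section). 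Hence $T = x^E = T' \langle x^{n/d}\rangle$, as required.

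There is no real obstacle here; the corollary is essentially a dictionary-change of Lemma \ref{l-4.7}. The only thing worth being careful about is that multiplicities are preserved under the correspondence $E \leftrightarrow x^E$ and under the sum-to-product translation, so that the multiset equality, not merely a set-theoretic containment, is what one obtains.
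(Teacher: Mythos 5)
Your proposal is correct and is exactly the argument the paper intends: the corollary is stated without proof as an immediate consequence of Lemma \ref{l-4.7}, obtained by the dictionary $T = x^E$, $\chi_z(\overline{T}) = (\mathcal{F}\Delta_E)(z)$ from equation (\ref{2.5}), and the identification $E' + \frac{n}{d}\mathbb{Z}_n \leftrightarrow T'\langle x^{n/d}\rangle$. Your remark about preserving multiplicities under this correspondence is the only point needing care, and you have handled it.
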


The following lemma gives a description of $X$ if $Dih(n,X,X)$ is a directed strongly regular dihedrant.\;
\begin{lem}\label{l-4.9}Let $Dih(n,X,X)$ be a directed strongly regular dihedrant with parameters $(2n,2|X|, \mu, \lambda, t)$.\;
If $\mu-\lambda$ is an odd number,\;then there is a subset $T$ of $\{x^1,\cdots,x^{\frac{n}{\mu-\lambda}-1}\}$ such that
\begin{equation}\label{4.9}
X={T}\langle x^{\frac{n}{\mu-\lambda}}\rangle.
\end{equation}
If $\mu-\lambda$ is an even number,\;then there is a subset $T'$ of $\{x^1,\cdots,x^{\frac{2n}{\mu-\lambda}-1}\}$ such that
\begin{equation}\label{4.10}
X={T'}\langle x^{\frac{2n}{\mu-\lambda}}\rangle.
\end{equation}
\end{lem}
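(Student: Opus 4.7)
Setting $d = \mu - \lambda$, which by Lemma \ref{l-4.6} is a positive divisor of $n$, my plan is to lift the coset structure of $U_X = X \uplus X^{(-1)}$, derived from the structural lemmas in Section 3, onto $X$ itself via the DSRG character identity in Lemma \ref{l-4.3}.

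First I would apply Lemmas \ref{l-3.4} and \ref{l-3.5} to $U_X$. Since $X$ is an ordinary set we have $\Delta_{U_X} \leqslant 2$, and by Lemma \ref{l-4.6} every nonprincipal character satisfies $\chi(\overline{U_X}) \in \{0, -d\}$, so in particular $\chi(\overline{U_X}) \equiv 0 \pmod{d}$. When $d$ is odd, Lemma \ref{l-3.4} gives $U_X = x^{E_1}\langle x^{n/d}\rangle$ for some multisubset $E_1$ of $\{0,1,\ldots,n/d-1\}$; when $d$ is even, Lemma \ref{l-3.5} gives $U_X = x^{E_1}\langle x^{2n/d}\rangle$ for some multisubset $E_1$ of $\{0,1,\ldots,2n/d-1\}$. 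Let $H$ denote the corresponding cyclic subgroup.

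Next I would deduce the character values on $X$. For any $z$ such that $\chi_z$ is nontrivial on $H$, orthogonality gives $\chi_z(\overline{H}) = 0$, hence $\chi_z(\overline{U_X}) = \chi_z(\overline{x^{E_1}})\chi_z(\overline{H}) = 0$; concretely these are the $z \notin d\mathbb{Z}_n$ in the odd case and $z \notin \tfrac{d}{2}\mathbb{Z}_n$ in the even case. Lemma \ref{l-4.3} yields, for every nonprincipal $\chi$,
\[\chi(\overline{X})\bigl(\chi(\overline{U_X}) - (\lambda-\mu)\bigr) = 0,\]
and since $\lambda - \mu = -d \neq 0$, the vanishing $\chi(\overline{U_X}) = 0$ forces $\chi(\overline{X}) = 0$. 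Thus $\chi_z(\overline{X}) = 0$ on the same range of $z$, and Corollary \ref{r-4.8} immediately gives $X = T'\langle x^{n/d}\rangle$ in the odd case or $X = T'\langle x^{2n/d}\rangle$ in the even case, for some multisubset $T'$ of coset representatives of $H$. Because $X$ is a set with $e \notin X$, the multisubset $T'$ has all multiplicities equal to one and excludes $e$, yielding the claimed $T' \subseteq \{x, x^2, \ldots, x^{n/d-1}\}$ or $T' \subseteq \{x, x^2, \ldots, x^{2n/d-1}\}$ respectively.

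I do not expect a substantive obstacle; the proof is essentially a chain of invocations of prior lemmas. The one delicate observation worth highlighting is that the same constant $\lambda - \mu$ appears in the character spectrum of $U_X$ (Lemma \ref{l-4.6}) and in the DSRG identity (Lemma \ref{l-4.3}), which is precisely what allows the implication $\chi(\overline{U_X}) = 0 \Rightarrow \chi(\overline{X}) = 0$ rather than the reverse, making the transfer of coset structure from $U_X$ to $X$ work.
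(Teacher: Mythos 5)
Your proposal is correct and follows essentially the same route as the paper: both apply Lemma \ref{l-3.4} (resp.\ \ref{l-3.5}) to $U_X$ using the divisibility from Lemma \ref{l-4.6}, observe that the resulting coset structure forces $\chi_z(\overline{U_X})=0$ off the subgroup $d\mathbb{Z}_n$ (resp.\ $\tfrac{d}{2}\mathbb{Z}_n$), feed this into the DSRG identity to conclude $\chi_z(\overline{X})=0$ there, and finish with Lemma \ref{l-4.7}/Corollary \ref{r-4.8}. The paper merely phrases the middle step in Fourier-transform notation via Lemma \ref{l-4.5} rather than character notation via Lemma \ref{l-4.3}, which is the same computation by equation (\ref{2.5}).
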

\begin{proof}Let $X=x^E$ for some $E$,\;then $U_X=x^{E\uplus(-E)}$.\;Suppose $\mu-\lambda$ is an odd number.\;Recall that
\begin{equation*}
\chi(\overline{U_X})\in\{0,\lambda-\mu\}
\end{equation*}
for all nonprincipal characters $\chi$ of $G$.\;This gives that
\begin{equation*}
\chi(\overline{U_X})\equiv 0\;\mathrm{mod}\;\mu-\lambda.
\end{equation*}
for all nonprincipal characters $\chi$ of $G$.\;It follows  from Lemma \ref{l-3.4} that there is a multisubset $E''$ of $\{1,\cdots,{\frac{n}{\mu-\lambda}-1}\}$ such that the multiplicity function $\Delta_{E''}\leqslant2$ and
\begin{equation}
\begin{aligned}
U_X=x^{E''}\langle x^{\frac{n}{\mu-\lambda}}\rangle.
\end{aligned}
\end{equation}
Then
\begin{equation*}
\begin{aligned}
E\uplus(-E)={E''}+\frac{n}{\mu-\lambda}\mathbb{Z}_n.
\end{aligned}
\end{equation*}
Hence
\begin{equation}\label{4.12}
\begin{aligned}
(\mathcal{F}\Delta_{E\uplus(-E)})(z)=(\mu-\lambda)(\mathcal{F}\Delta_{E''})(z)\Delta_{(\mu-\lambda)\mathbb{Z}_n}(z).
\end{aligned}
\end{equation}
Therefore,\;from Lemma \ref{l-4.5} and (\ref{4.12}),\;(\ref{4.7}) becomes
\begin{equation}
\begin{aligned}
\mathbf{r}_E(z)(\mu-\lambda)(\mathcal{F}\Delta_{E''})(z)\Delta_{(\mu-\lambda)\mathbb{Z}_n}(z)=\mu n\Delta_0(z)-(\mu-\lambda)\mathbf{r}_E(z).
\end{aligned}
\end{equation}
Hence $\mathbf{r}_E(z)=0$ for any $z\not\in(\mu-\lambda)\mathbb{Z}_n$.\;It follows from Lemma \ref{l-4.7} that there exists a subset $E'$ of $\{1,\cdots,{\frac{n}{\mu-\lambda}-1}\}$ such that
\begin{equation*}
\begin{aligned}
E={E'}+\frac{n}{\mu-\lambda}\mathbb{Z}_n.
\end{aligned}
\end{equation*}
This shows that
\begin{equation*}
X=x^E=x^{E'}\langle x^{\frac{n}{\mu-\lambda}}\rangle\overset{\text{def}}=T\langle x^{\frac{n}{\mu-\lambda}}\rangle.
\end{equation*}
Now we consider the case of $\mu-\lambda$ is an even number.\;From Lemma \ref{l-3.5},\;we can also get that there is a multisubset $\widetilde{E}''$ of $\{1,\cdots,{\frac{2n}{\mu-\lambda}-1}\}$ such that the multiplicity function $\Delta_{\widetilde{E}''}\leqslant2$ and
\begin{equation}
\begin{aligned}
U_X=x^{\widetilde{E}''}\langle x^{\frac{2n}{\mu-\lambda}}\rangle.
\end{aligned}
\end{equation}
Then
\begin{equation*}
\begin{aligned}
E\uplus(-E)={\widetilde{E}''}+\frac{2n}{\mu-\lambda}\mathbb{Z}_n.
\end{aligned}
\end{equation*}
Hence
\begin{equation}\label{4.15}
\begin{aligned}
(\mathcal{F}\Delta_{E\uplus(-E)})(z)=\frac{\mu-\lambda}{2}(\mathcal{F}\Delta_{\widetilde{E}''})(z)\Delta_{\frac{\mu-\lambda}{2}\mathbb{Z}_n}(z).
\end{aligned}
\end{equation}
Now (\ref{4.7}) becomes
\begin{equation}
\begin{aligned}
\mathbf{r}_E(z)\frac{\mu-\lambda}{2}(\mathcal{F}\Delta_{\widetilde{E}''})(z)\Delta_{\frac{\mu-\lambda}{2}\mathbb{Z}_n}(z)=\mu n\Delta_0(z)-(\mu-\lambda)\mathbf{r}_E(z),
\end{aligned}
\end{equation}
from Lemma \ref{l-4.5} and (\ref{4.15}).\;Hence $\mathbf{r}_E(z)=0$ for any $z\not\in\frac{\mu-\lambda}{2}\mathbb{Z}_n$.\;Then from Lemma \ref{l-4.7},\;there exists a subset $\widetilde{E}'$ of $\{1,\cdots,{\frac{2n}{\mu-\lambda}-1}\}$ such that
\begin{equation}
\begin{aligned}
E={\widetilde{E}'}+\frac{2n}{\mu-\lambda}\mathbb{Z}_n\Rightarrow X=x^E=x^{\widetilde{E}'}\langle x^{\frac{2n}{\mu-\lambda}}\rangle\overset{\text{def}}=T'\langle x^{\frac{2n}{\mu-\lambda}}\rangle.
\end{aligned}
\end{equation}
This completes the proof.\;
\end{proof}

\section{Some constructions of directed strongly regular dihedrants}
\begin{con}\label{c-5.1}
Let $v$ be an odd positive divisor of $n$.\;Let $T$ be a subset of $\{x^1,x^2,\cdots,x^{v-1}\}$ and
$X$ be a subset of $C_n$ satisfy the following conditions:\\
$(i)$\;$X=T\langle x^v\rangle$.\\
$(ii)$\;$X\cup X^{(-1)}=C_n\setminus\langle x^v\rangle$.\\
Then $Dih(n,X,X)$ is a DSRG with parameters $\left(2n,n-l,\frac{n-l}{2},\frac{n-l}{2}-l,\frac{n-l}{2}\right)$,\;where $l=\frac{n}{v}$.\;
\end{con}
\begin{proof}Note that $\overline{X}\overline{U_X}=-l\overline{X}+\frac{n-l}{2}\overline{C_n}$.\;The result follows from Lemma \ref{l-4.2} directly.\;
\end{proof}

\begin{con}\label{c-5.2}Let $2v$ be an even positive divisor of $n$.\;Let $T$ be a subset of $\{x^1,x^2,\cdots,x^{2v-1}\}$ and
$X$ be a subset of $C_n$ satisfy the following conditions:\\
$(i)$\;$X=T\langle x^{2v}\rangle$;\\
$(ii)$\;$X\uplus X^{(-1)}=(C_n\setminus\langle x^{2v}\rangle)\uplus x^v\langle x^{2v}\rangle$;\\
$(iii)$\;$X\cup x^vX=C_n$.\\
Then $Dih(n,X,X)$ is a DSRG with parameters $\left(2n,n,\frac{n}{2}+l',\frac{n}{2}-l',\frac{n}{2}+l'\right)$,\;where $l'=\frac{n}{2v}$.\;
\end{con}

\begin{proof}Note that $|X|=\frac{n}{2}$ and $\overline{U_X}=\overline{C_n}-\overline{\langle x^{2v}\rangle}+\overline{x^v\langle x^{2v}\rangle}$.\;Thus $\overline{X}\;\overline{U_X}=-l'\overline{X}+\frac{n}{2}\overline{C_n}+\overline{X}\;\overline{x^v\langle x^{2v}\rangle}=-l'\overline{X}+\frac{n}{2}\overline{C_n}+l'\overline{x^{v}X}
=-l'\overline{X}+\frac{n}{2}\overline{C_n}+l'\overline{C_n}-l'\overline{X}=(\frac{n}{2}+l')\overline{C_n}-2l'\overline{X}$. The result follows from Lemma \ref{l-4.2} directly.\;
\end{proof}
\begin{rmk}In Constructions \ref{c-5.2},\;$v\geqslant2$,\;otherwise,\;$Dih(n,X,X)$ is not a genuine DSRG which has parameters $\left(2n,n,n,0,n\right)$,\;we don't consider this case in this paper.\;
\end{rmk}
\begin{rmk}In Constructions \ref{c-5.1},\;$n\geqslant 3$.\;In Constructions \ref{c-5.2},\;$n\geqslant 4$,\;this is the reason that we assume $n\geqslant 3$ in this paper.\;
\end{rmk}

\section{The characterization of directed strongly regular dihedrants $Dih(n,X,X)$ }
We can now prove the  main theorem  of this paper.\;
\begin{thm}The dihedrant $Dih(n,X,X)$ is a DSRG with parameters $(2n,2|X|,\mu,\lambda,t)$ if and only if one of the following  holds:\\
$(a)$\;Let $v=\frac{n}{\mu-\lambda}$.\;There is a subset $T$ of $\{x,x^2,\cdots,x^{v-1}\}$ satisfies\\
\hspace*{15pt}$(i)$\;$X=T\langle x^v\rangle$;\\
\hspace*{15pt}$(ii)$\;$X\cup X^{(-1)}=C_n\setminus\langle x^v\rangle$.\\
$(b)$\;Let $v=\frac{n}{\mu-\lambda}$.\;There is a subset $T'$ of $\{x,x^2,\cdots,x^{2v-1}\}$ satisfies\\
\hspace*{15pt}$(i)$\;$X=T'\langle x^{2v}\rangle$;\\
\hspace*{15pt}$(ii)$\;$X\uplus X^{(-1)}=(C_n\setminus\langle x^{2v}\rangle)\uplus x^v\langle x^{2v}\rangle$;\\
\hspace*{15pt}$(iii)$\;$X\cup x^vX=C_n$.
\end{thm}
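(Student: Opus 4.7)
Sufficiency follows directly from Constructions~\ref{c-5.1} and~\ref{c-5.2}, so the work is in necessity. The plan is to start from Lemma~\ref{l-4.6}, which provides that $\mu-\lambda$ divides $n$ and $\chi(\overline{U_X}) \in \{0, \lambda - \mu\}$ for every nonprincipal $\chi \in \widehat{C_n}$, and then invoke Lemma~\ref{l-4.9} to obtain the coset structure of $X$ according to the parity of $\mu - \lambda$. Setting $v = n/(\mu - \lambda)$, the strategy in each parity case is to push $\overline{U_X}$ into the cyclic quotient of $C_n$ in which it becomes non-trivial and apply the appropriate rigidity result from Section~3.

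When $\mu - \lambda$ is odd, Lemma~\ref{l-4.9} gives $X = T\langle x^v\rangle$ with $T \subseteq \{x,\ldots,x^{v-1}\}$. Only the characters $\chi_z$ with $z \in (\mu-\lambda)\mathbb{Z}_n$ contribute non-zero values to $\overline{U_X}$, and for these the identity $\chi_z(\overline{U_X}) = (\mu-\lambda)\chi'_z(\pi_v(T \uplus T^{(-1)}))$ forces $\chi'(\pi_v(T \uplus T^{(-1)})) \in \{0,-1\}$ on every nonprincipal character $\chi'$ of $C_n/\langle x^v\rangle$. Lemma~\ref{l-3.8}, applied in this quotient of order $v$, collapses the multiset $\pi_v(T \uplus T^{(-1)})$ to $(C_n/\langle x^v\rangle)\setminus\{e\}$ with each non-identity coset appearing with multiplicity one. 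Lifting back to $C_n$ yields $X \cap X^{(-1)} = \emptyset$ together with $X \cup X^{(-1)} = C_n \setminus \langle x^v\rangle$, which is case~$(a)$.

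When $\mu - \lambda$ is even, Lemma~\ref{l-4.9} only gives $X = T'\langle x^{2v}\rangle$. The analogous push-forward to $C_n/\langle x^{2v}\rangle$ (of order~$2v$) yields $\chi'(\pi(T'\uplus T'^{(-1)})) \in \{0,-2\}$ on every nonprincipal character, and Lemma~\ref{l-3.9} leaves two possibilities. In the first, $\pi(T' \uplus T'^{(-1)}) = (C_n/\langle x^{2v}\rangle)\setminus\{e, \pi(x^v)\}$, so $\overline{U_X} = \overline{C_n}-\overline{\langle x^v\rangle}$; computing $\mathcal{F}\Delta_{E \uplus (-E)}$ from this and substituting into the Fourier identity of Lemma~\ref{l-4.5} shows $\mathbf{r}_E(z) = 0$ for every $z \notin (\mu-\lambda)\mathbb{Z}_n$, and Lemma~\ref{l-4.7} then promotes $X$ to a union of $\langle x^v\rangle$-cosets, landing us back in case~$(a)$. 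In the second, $\pi(T' \uplus T'^{(-1)})$ has multiplicity two at $\pi(x^v)$, which directly supplies conditions $(b)(\mathrm{i})$ and $(b)(\mathrm{ii})$. To extract $(b)(\mathrm{iii})$ I plan a second pass through Lemma~\ref{l-4.5}: with this explicit $\mathcal{F}\Delta_{E\uplus(-E)}$, the identity forces $\mathbf{r}_E(z) = 0$ for every nonprincipal $z \in (\mu-\lambda)\mathbb{Z}_n$, equivalently $\chi'(\pi_v(\overline{T'})) = 0$ for every nonprincipal character of $C_n/\langle x^v\rangle$; combined with $|T'| = v$, the inversion formula of Lemma~\ref{l-2.7} forces $\pi_v(T')$ to cover $C_n/\langle x^v\rangle$ exactly once, so $T' \cup x^v T'$ represents all cosets of $\langle x^{2v}\rangle$, which is $(b)(\mathrm{iii})$.

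The main obstacle is the even case, and specifically the extraction of $(b)(\mathrm{iii})$. Lemma~\ref{l-3.9} only prescribes $\overline{U_X}$ up to multiplicity inside each $\langle x^{2v}\rangle$-coset and is silent on which half of each $\langle x^v\rangle$-coset is selected by $T'$; that extra rigidity has to be harvested from the DSRG Fourier identity by carefully identifying the characters on which the identity is non-tautological. Tracking the two nested quotients $C_n \to C_n/\langle x^{2v}\rangle \to C_n/\langle x^v\rangle$ without conflating them, and keeping careful control of when $\zeta_n^{vz} \pm 1$ vanishes for $z$ in the relevant subgroups, is where the computation is most delicate.
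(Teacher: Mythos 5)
Your proposal is correct and follows essentially the same route as the paper: sufficiency from Constructions \ref{c-5.1} and \ref{c-5.2}, and necessity via Lemma \ref{l-4.6} and Lemma \ref{l-4.9} for the coset structure of $X$, followed by pushing $\overline{U_X}$ into the quotient of order $v$ (resp.\ $2v$) and invoking the rigidity Lemmas \ref{l-3.8} and \ref{l-3.9}, with the even case splitting into exactly the two sub-cases you describe. The only cosmetic difference is that you finish the even case with the Fourier identity of Lemma \ref{l-4.5} together with Lemma \ref{l-4.7}, whereas the paper manipulates the equivalent group-ring identity of Lemma \ref{l-4.2} and uses Corollary \ref{r-4.8}; both computations pin down $(a)(i)$ in the first sub-case and $(b)(iii)$ in the second in the same way.
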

\begin{proof}It follows from Construction \ref{c-5.1} and \ref{c-5.2} that the dihedrant $Dih(n,X,X)$ satisfies condition $(a)$ or $(b)$ is a DSRG.\;Now assume that dihedrant $Dih(n,X,X)$ is a DSRG with parameters $(2n,2|X|,\mu,\lambda,t)$ and we divide into two cases.\;Recall that
$\chi(\overline{U_X})\in\{0,\lambda-\mu\}$ for all nonprincipal characters $\chi$ of $C_n$.\\
$\mathbf{Case\;1}$.\;Suppose $\mu-\lambda$ is an odd number,\;then from Lemma \ref{l-4.9} and (\ref{4.9}),\;we have
\begin{equation}
\begin{aligned}
X=T\langle x^{v}\rangle
\end{aligned}
\end{equation}
for some subset $T$ of $\{x^1,\cdots,x^{v-1}\}$,\;agreeing with the case $(a)$ $(i)$.\;Let $\pi_{v}$ be the natural projection from $C_n=\langle x\rangle$ to the quotient group $\langle x\rangle/\langle x^{v}\rangle$.\;Then
\[\pi_{v}(U_X)=\pi_v({U_T}\langle x^{v}\rangle)=(\mu-\lambda)\oplus \pi_v(U_T).\]
Note that there is a bijection between
\[\widehat{\langle x\rangle/\langle x^{v}\rangle}\leftrightarrow\{\chi\in \widehat{C_n}|\langle x^{v}\rangle\subseteq\ker\chi\}=\{\chi_{(\mu-\lambda)j}|0\leqslant j\leqslant v-1\}.\]
Let $\chi'_j$ be the character of quotient group $\langle x\rangle/\langle x^{v}\rangle$ induced by the character $\chi_{(\mu-\lambda)j}\in \widehat{C_n}$,\;where $1\leqslant j\leqslant v-1$,\;then by Lemma \ref{l-4.6},\;
\begin{equation}\label{6.2}
\begin{aligned}
\chi_j'(\overline{\pi_v(U_T)})=\frac{\chi_j'(\overline{(\mu-\lambda)\oplus  \pi_v(U_T)})}{\mu-\lambda}=\frac{\chi_{(\mu-\lambda)j}(\overline{{U_T}\langle x^{v}\rangle})}{\mu-\lambda}=\frac{\chi_{(\mu-\lambda)j}(\overline{U_X})}{\mu-\lambda}\in\{0,-1\}
\end{aligned}
\end{equation}
for any $1\leqslant j\leqslant v-1$.\;As a direct consequence of Lemma \ref{l-3.8} and (\ref{6.2}),\;we have
\[\pi_v(U_T)=\left(\langle x\rangle/\langle x^{v}\rangle\right)\setminus\{\pi_v(e)\}=\{\pi_v(x^j):1\leqslant j\leqslant v-1\}.\]
This gives that
\[U_T=\{x^jb_j:1\leqslant j\leqslant v-1\}\]
for some $b_1,b_2,\cdots,b_{v-1}\in \langle x^v\rangle$.\;Then
\[X\cup X^{(-1)}=U_X=U_T\langle x^v\rangle =C_n\setminus\langle x^{v}\rangle,\]
agreeing with the case $(a)$ $(ii)$.\;\\
$\mathbf{Case\;2}$.We now assume $\mu-\lambda$ is an even number,\;we can also obtain
\begin{equation}
\begin{aligned}
X={T'}\langle x^{2v}\rangle
\end{aligned}
\end{equation}
for some subset $T'$ of $\{x^1,\cdots,x^{2v-1}\}$ by Lemma \ref{l-4.9}.\;Let $\pi_{2v}$ be the natural projection from $\langle x\rangle$ to $\langle x\rangle/\langle x^{2v}\rangle$.\;Then
$$\pi_{2v}(U_X)=\pi_{2v}({U_{T'}}\langle x^{2v}\rangle)=\frac{\mu-\lambda}{2}\oplus \pi_{2v}(U_{T'}).$$
Let $\chi'_j$ be the character of quotient group $\langle x\rangle/\langle x^{2v}\rangle$ induced by the character $\chi_{\frac{\mu-\lambda}{2}j}\in \widehat{C_n}$,\;where $1\leqslant j\leqslant 2v-1$,\;then by Lemma \ref{l-4.6},\;
\begin{equation}\label{6.4}
\begin{aligned}
\chi_j'(\overline{\pi_{2v}(U_{T'})})=\frac{2\chi_{(\mu-\lambda)j}(\overline{{U_{T'}}\langle x^{2v}\rangle})}{\mu-\lambda}=\frac{2\chi_{(\mu-\lambda)j}(\overline{U_X})}{\mu-\lambda}\in\{0,-2\}
\end{aligned}
\end{equation}
for any $1\leqslant j\leqslant 2v-1$.\;As a direct consequence of Lemma \ref{l-3.9} and (\ref{6.4}),\;we have
$${\pi_{2v}(U_{T'})}=\left(\langle x\rangle/\langle x^{2v}\rangle\right)\setminus\{\pi_{2v}(e),\pi_{2v}(x^{v})\}\;
\text{or}\;\{\pi_{2v}(x^{v})\}\uplus\left(\left(\langle x\rangle/\langle x^{2v}\rangle\right)\setminus\{\pi_{2v}(e)\}\right).$$
$\mathbf{Case\;2.1}$.\;For the first choice of ${\pi_{2v}(U_{T'})}$,\;we can get $|T'|=v-1$ and $U_X=C_n\setminus(\langle x^{2v}\rangle\cup x^v\langle x^{2v}\rangle)=C_n\setminus\langle x^{v}\rangle$,\;agreeing with the case $(a)$ $(ii)$.\;From Lemma \ref{l-4.2},\;we have
\begin{equation*}
\mu\overline{C_n}-\frac{n}{v}\overline{T'\langle x^{2v}\rangle}=\overline{X}\;\overline{U_X}=\overline{T'\langle x^{2v}\rangle}(\overline{C_n}-\overline{\langle x^{v}\rangle})=|X|\overline{C_n}-\frac{n}{2v}\overline{T'\langle x^{v}\rangle}.
\end{equation*}
This implies that
\begin{equation}\label{6.5}
\frac{2v}{n}(|X|-\mu)\overline{C_n}=\overline{T'\langle x^{v}\rangle}-2\overline{T'\langle x^{2v}\rangle}=\overline{T'x^v\langle x^{2v}\rangle}-\overline{T'\langle x^{2v}\rangle}.
\end{equation}
Thus we can get
\begin{equation}\label{6.6}
\frac{2v}{n}(|X|-\mu)n=\chi_0(\overline{T'x^v\langle x^{2v}\rangle})-\chi_0(\overline{T'\langle x^{2v}\rangle})=0\Rightarrow\frac{2v}{n}(|X|-\mu)=0.
\end{equation}
Then (\ref{6.5}) and (\ref{6.6}) gives that
\begin{equation}\label{6.7}
{T'x^{v}\langle x^{2v}\rangle}={T'\langle x^{2v}\rangle}.
\end{equation}
By applying $\pi_{2v}$ on both sides of (\ref{6.7}),\;we have
\[\pi_{2v}(T')\widetilde{x}^v=\pi_{2v}(T')\]
where $\widetilde{x}=\pi_{2v}(x)$.\;Let $\widehat{\langle x\rangle/\langle x^{2v}\rangle}=\{\chi'_j:0\leqslant j\leqslant 2v-1\}$,\;then
\[\chi_j'(\pi_{2v}(T'))=\chi_j'(\pi_{2v}(T'))\chi_j'(\widetilde{x}^v)=-\chi_j'(\pi_{2v}(T'))\Rightarrow \chi_j'(\pi_{2v}(T'))=0\]
for any $0\leqslant j\leqslant 2v-1$ and $j\not\in 2\mathbb{Z}_{2v}$.\;It follows from Corollary \ref{r-4.8} that $\pi_{2v}(T')=x^{E}\langle \widetilde{x}^{v}\rangle$ for some subset $E$ of $\{1,\cdots,{v-1}\}$.\;Therefore
\[T'=\bigcup_{a\in E}\{x^{a}b_a,x^{a+v}b_a'\}\]
for some $b_a,b_a'\in \langle x^{2v}\rangle$,\;where $a\in E$.\;Then
\[X=T'\langle x^{2v}\rangle=x^{E}\langle x^{2v}\rangle\cup x^{E}x^v\langle x^{2v}\rangle= x^{E}\langle x^{v}\rangle\overset{\text{def}}=T\langle x^{v}\rangle,\]
agreeing with the case $(a)$ $(i)$.\\
$\mathbf{Case\;2.2}$.\;For the second choice of ${\pi_{2v}(U_{T'})}$,\;we can get $|T'|=v$,\; $X={T'}\langle x^{2v}\rangle$ and $U_X=(C_n\setminus\langle x^{2v}\rangle)\uplus x^v\langle x^{2v}\rangle$,\;agreeing with the case $(b)$ $(i)$ and $(ii)$.\;From Lemma \ref{l-4.2},\;we have
\begin{equation}
\begin{aligned}
\overline{X}\;\overline{U_X}&=\overline{T'\langle x^{2v}\rangle}(\overline{C_n}-\overline{\langle x^{2v}\rangle})+\overline{T'\langle x^{2v}\rangle}\;\overline{x^v\langle x^{2v}\rangle}=\frac{n}{2}\overline{C_n}-\frac{n}{2v}\overline{T'\langle x^{2v}\rangle}+\frac{n}{2v}\overline{T'x^v\langle x^{2v}\rangle}\\
&=\mu\overline{C_n}-\frac{n}{v}\overline{T'\langle x^{2v}\rangle}.
\end{aligned}
\end{equation}
This claims that
\begin{equation}\label{6.9}
\frac{n}{2v}\overline{T'\langle x^{v}\rangle}=(\mu-\frac{n}{2})\overline{C_n}.
\end{equation}
Hence
$$(\mu-\frac{n}{2})n=\chi_0((\mu-\frac{n}{2})\overline{C_n})=\chi_0(\frac{n}{2v}\overline{T'\langle x^{v}\rangle})=\frac{n}{2v}\cdot|T'|\cdot\frac{n}{2v}=\frac{n}{2}\frac{n}{v},$$
this gives that $(\mu-\frac{n}{2})=\frac{n}{2v}$.\;Then (\ref{6.9}) implies
\[{C_n}={T'\langle x^{v}\rangle}=T'\langle x^{2v}\rangle\cup T'x^v\langle x^{2v}\rangle=X\cup(x^vX),\]
agreeing with the case $(b)$ $(iii)$.\;
\end{proof}
\section*{Acknowledgements}
The author are grateful to those of you who support to us.
\section*{References}

\bibliographystyle{plain}
\bibliography{12}

\begin{thebibliography}{1}
\expandafter\ifx\csname url\endcsname\relax
  \def\url#1{\texttt{#1}}\fi
\expandafter\ifx\csname urlprefix\endcsname\relax\def\urlprefix{URL }\fi
\expandafter\ifx\csname href\endcsname\relax
  \def\href#1#2{#2} \def\path#1{#1}\fi

\bibitem{K2}
M.~Klin, A.~Munemasa, M.~Muzychuk, P.-H. Zieschang, Directed strongly regular
  graphs obtained from coherent algebras, Linear algebra and its applications
  377 (2004) 83--109.

\bibitem{FI}
F.~Fiedler, M.~Klin, M.~H. Muzychuk, Small vertex-transitive directed strongly
  regular graphs, Discrete mathematics 255~(1) (2002) 87--115.

\bibitem{MI}
S.~Miklavi\v{c}, P.~Poto\v{c}nik, Distance-regular cayley graphs on dihedral
  groups, Journal of Combinatorial Theory, Series B 97~(1) (2007) 14--33.

\bibitem{A}
A.~M. Duval, A directed graph version of strongly regular graphs, Journal of
  Combinatorial Theory, Series A 47~(1) (1988) 71--100.

\bibitem{Arasu1996Exponent}
K.~T. Arasu, J.~A. Davis, J.~Jedwab, S.~L. Ma, R.~L. Mcfarland, Exponent bounds
  for a family of abelian difference sets (1996) 129--143.

\end{thebibliography}

\end{document}